\def\TC{\protect\operatorname{TC}}
\def\secat{\protect\operatorname{secat}}
\def\Imm{\protect\operatorname{Imm}}
\DeclareMathOperator{\cat}{\mathrm{cat}}
\DeclareMathOperator{\zcl}{\mathrm{zcl}}
\DeclareMathOperator{\wgt}{\mathrm{wgt}}
\DeclareMathOperator{\zclw}{\zcl^{\underline{w}}_s}
\def\Z{{\mathbb{Z}}}
\DeclareRobustCommand{\arraba}{\genfrac{}{}{0pt}{}}
\newcommand{\fv}{\mathfrak v}
\newcommand{\fb}{\mathfrak b}
\newtheorem{theorem}{Theorem}[section]
\newtheorem{lemma}[theorem]{Lemma}
\newtheorem{proposition}[theorem]{Proposition}
\newtheorem{definition}[theorem]{Definition}
\newtheorem{corollary}[theorem]{Corollary}
\numberwithin{equation}{section}
\newtheorem{theo}{Theorem}
\theoremstyle{definition}
\newtheorem{example}[theorem]{Example}
\newtheorem{remark}[theorem]{Remark}
\begin{document}
\title[Higher TC of manifolds  with abelian fundamental group]{On the higher topological complexity of manifolds  with abelian fundamental group}

\author[N. Cadavid-Aguilar]{N. Cadavid-Aguilar}
\address{Departamento de Matem\'aticas, Centro de Investigaci\'on y de Estudios Avanzados del IPN, Av. Instituto Polit\'ecnico Nacional 2508, San Pedro Zacatenco, Ciudad de M\'exico 07000}
\email{ncadavia@gmail.com}
\urladdr{}

\author[D. Cohen]{D. Cohen}
\address{Department of Mathematics, Louisiana State University, Baton Rouge, Louisiana 70803}
\email{\href{mailto:cohen@math.lsu.edu}{cohen@math.lsu.edu}}
\urladdr{\href{http://www.math.lsu.edu/~cohen/}
{www.math.lsu.edu/\char'176cohen}}

\author[J.  Gonz\'alez]{J. Gonz\'alez}
\address{Departamento de Matem\'aticas, Centro de Investigaci\'on y de Estudios Avanzados del IPN, Av. Instituto Polit\'ecnico Nacional 2508, San Pedro Zacatenco, Ciudad de M\'exico 07000}
\email{jesus.glz-espino@cinvestav.mx}
\urladdr{}

\author[S.  Hughes]{S. Hughes}
\address{Rheinische Friedrich-Wilhelms-Universit\"at Bonn, Mathematical Institute, Endenicher Allee 60, 53115 Bonn, Germany}
\email{sam.hughes.maths@gmail.com; \href{mailto:hughes@math.uni-bonn.de}{hughes@math.uni-bonn.de}}
\urladdr{\href{https://samhughesmaths.github.io}{samhughesmaths.github.io}}

\author[L. Vandembroucq]{L. Vandembroucq}
\address{Centro de Matem\'atica, Universidade do Minho, Braga, Portugal}
\email{lucile@math.uminho.pt}
\urladdr{}

\begin{abstract} 
We study the higher (or sequential) topological complexity $\TC_s$ of manifolds with abelian fundamental group. We give sufficient conditions for $\TC_s$ to be non-maximal in both the orientable and non-orientable cases. In combination with cohomological lower bounds, we also obtain some exact values for certain families of manifolds.  
\end{abstract}

\maketitle

\section*{Introduction}

For a path-connected space $X$, the $s$-th \emph{higher topological complexity} $\TC_s(X)$ is the sectional category of the fibration $e_s\colon PX\to X^s$, that is
\[
\TC_s(X)=\secat(e_s\colon PX\to X^s), 
\]
where $PX$ denotes the space of paths in $X$ and
\[
e_s(\gamma)=\Bigl(\gamma(0),\gamma\bigl(\frac{1}{s-1}\bigr),\ldots,\gamma\bigl(\frac{s-2}{s-1}\bigr),\gamma(1)\Bigr)
\]
is the usual $s$-th evaluation map. That is, in the reduced version used here,  $\TC_s(X)$ is one less than the minimal number of open sets covering $X^s$, over each of which the fibration $e_s$ admits a section.

Topological complexity $\TC(X)=\TC_2(X)$ was introduced by Farber in \cite{Farber} and the `higher' invariants were introduced by Rudyak in \cite{Rudyak}.  The invariants were developed and motivated by applications for motion planning problems in robotics.   More precisely, viewing $X$ as the space of configurations of a mechanical system, the integer $\TC_s(X)$ provides a topological measure of the complexity of planning motion in $X$ from an initial configuration to a terminal configuration, passing through $s-2$ specified intermediate configurations.

Despite a huge body of research into these invariants, there are very few complete computations of $\TC_s(X)$. Examples for which the full spectrum of invariants is known include products of spheres, surfaces, path-connected topological groups whose Lusternik-Schnirelmann category is known, closed simply-connected symplectic manifolds, classifying spaces of hyperbolic groups and some (additional) polyhedral product type spaces, see \cite{BGRT, GGGL, HL, AGO}. In a number of these examples, the higher topological complexities attain the maximal values possible. 

If $X$ is not simply connected, this maximal value is $\TC_s(X) \le s \dim(X)$, where $\dim(X)$ is the homotopy dimension of $X$, see \cite{BGRT}. Work of Cohen--Vandembroucq \cite{CV21} explored the non-maximality of $\TC_2(M)$ when $M$ is a manifold with abelian fundamental group.  In this paper we extend these ideas to $\TC_s(M)$ for $s\ge 2$.

Espinosa Baro, Farber, Mescher, and Oprea \cite{EFMO} have recently characterized the maximality of $\TC_s$ of a finite-dimensional CW-complex $X$ in terms of a canonical cohomology class generalizing the `Costa--Farber class' introduced in \cite{MR2649230} (see Section \ref{secintro}). Restricting our attention to a manifold $M$ with an abelian fundamental group $\pi$ and following the strategy of \cite{CV21}, we first  
express 
this characterization in terms of a homology class of the group $\pi^{s-1}$ (see Proposition \ref{suff-cond} and Corollary \ref{Cor-orientable case}). This permits us to establish the non-maximality of $\TC_s(M)$ in some cases. For example, when $M$ is orientable, we obtain the following result (see Section \ref{sec:orient}):
\begin{theo}\label{intro_upperbound_orientable} Let $M$ be an orientable $n$-dimensional connected closed manifold. In each of the following cases, we have $\TC_s(M)<sn$:
	\begin{enumerate}
		\item $\pi_1(M)=\Z^r$ with $(s-1)r<s\dim(M)$;
		\item $\pi_1(M)=\Z_q$;
		\item $\pi_1(M)=\Z^ r\times \Z_q$ with $r<\dim(M)$.
	\end{enumerate} 
\end{theo}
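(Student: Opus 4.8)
The plan is to deduce all three statements from the homological reformulation of the maximality criterion of \cite{EFMO} provided by Corollary~\ref{Cor-orientable case} (itself built on Proposition~\ref{suff-cond}). Write $\pi=\pi_1(M)$ and $n=\dim(M)$, let $p\colon\pi^s\to\pi^{s-1}$ denote the quotient of $\pi^s$ by its diagonal copy of $\pi$, and let $\fv\in H^1(\pi^{s-1};I)$ be the canonical class $h\mapsto h-1$, where $I=\ker(\Z[\pi^{s-1}]\to\Z)$ is the augmentation ideal; that the maximality of $\TC_s(M)$ is governed by $\fv$, rather than by the a priori larger Costa--Farber class of \cite{EFMO} living over $M^s$, is where the abelianness of $\pi$ enters and is exactly the content of Corollary~\ref{Cor-orientable case}. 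In these terms, that corollary asserts that $\TC_s(M)=sn$ if and only if $\fv^{sn}\frown\fb\ne 0$ in $H_0(\pi^{s-1};I^{\otimes sn})$, where $\fb\in H_{sn}(\pi^{s-1};\Z)$ is the image of the fundamental class $[M^s]=[M]^{\times s}$ under the map induced by $\pi_1(M^s)=\pi^s\xrightarrow{p}\pi^{s-1}$. Hence in each case it is enough to show that $\fv^{sn}\frown\fb$ vanishes, and we shall do so by locating $\fb$ in a subgroup of $H_{sn}(\pi^{s-1};\Z)$ that $\fv^{sn}$ annihilates.

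Case (1) is purely dimensional: if $\pi=\Z^r$ then $\pi^{s-1}\cong\Z^{r(s-1)}$, so $H_k(\pi^{s-1};\Z)=0$ for every $k>r(s-1)$, and the hypothesis $(s-1)r<sn$ places $\fb$ in $H_{sn}(\pi^{s-1};\Z)=0$. Thus $\TC_s(M)<sn$.

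In cases (2) and (3) the group $\pi^{s-1}$ has infinite cohomological dimension, so the count above is no longer available and one must examine $\fb$ directly, using that $\fb=p_*(\mu^{\times s})$ with $\mu\in H_n(\pi;\Z)$ the image of $[M]$ under the classifying map $M\to B\pi$. If $\pi=\Z_q$ and $n$ is even, then $H_n(\Z_q;\Z)=0$, so $\mu=0$ and $\fb=0$. If $\pi=\Z_q$ and $n$ is odd, one computes $\fv^{sn}\frown p_*(\mu^{\times s})$ explicitly, using the $2$-periodic minimal free resolution of $\Z$ over $\Z[\Z_q]$, the K\"unneth theorem for $\Z_q^{s-1}$, and the effect of the difference map $p$ on homology, and finds that it vanishes. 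If $\pi=\Z^r\times\Z_q$ with $r<n$, write $\pi^{s-1}\cong\Z^{r(s-1)}\times\Z_q^{s-1}$; because $r<n$ forces $r(s-1)<n(s-1)<sn$, the K\"unneth theorem (whose Tor summands vanish, $H_*(\Z^{r(s-1)};\Z)$ being free) exhibits $H_{sn}(\pi^{s-1};\Z)$ as a sum of terms $H_i(\Z^{r(s-1)};\Z)\otimes H_{sn-i}(\Z_q^{s-1};\Z)$ with $0\le i\le r(s-1)$, hence $sn-i>0$; consequently $\fv^{sn}$ can only detect $\fb$ through classes coming from the cohomology of $\Z_q^{s-1}$ in positive degree, and the vanishing of $\fv^{sn}\frown\fb$ is reduced to the cyclic computation. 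In each case $\fv^{sn}\frown\fb=0$, so $\TC_s(M)<sn$.

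The main obstacle is precisely this cyclic computation. In contrast with Case (1), the vanishing of $\fv^{sn}\frown\fb$ for $\pi=\Z_q$ (and hence for $\pi=\Z^r\times\Z_q$) is not formal: it requires a careful analysis of the coefficient modules $I^{\otimes sn}$ over $\Z[\Z_q^{s-1}]$ together with an explicit description of how $\mu^{\times s}$ is transported by $p$ into $H_{sn}(\pi^{s-1};\Z)$.
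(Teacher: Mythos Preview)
Your setup and Case~(1) are correct and coincide with the paper's argument (Proposition~\ref{free}): invoke Corollary~\ref{Cor-orientable case} and kill the class for degree reasons in $H_{sn}(\Z^{r(s-1)})$. The gap is Cases~(2) and~(3), where you acknowledge that the ``cyclic computation'' is the main obstacle and then do not perform it. Saying that one ``computes $\fv^{sn}\frown p_*(\mu^{\times s})$ explicitly \ldots\ and finds that it vanishes'' is a hope, not a proof; nothing you have written indicates why the answer should be zero rather than nonzero.

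The missing idea, supplied in the paper (Propositions~\ref{cyclic} and~\ref{freexcyclic}), is short and structural: one shows the stronger vanishing $^s\chi_*(\mathfrak{m}^{\times s})=0$ already at the chain level. Factor $^s\chi$ as $(\chi)^{\otimes(s-1)}\circ(\mathrm{Id}\times\Delta^{\otimes(s-2)}\times\mathrm{Id})$ with $\chi=\mu\circ(\mathrm{Id}\times j)$. On the standard $2$-periodic resolution for $\Z_q$, the Pontryagin product satisfies $[2i{+}1]\wedge[2k{+}1]=0$. A cycle for $\mathfrak{m}$ in odd dimension is $\lambda[2p{+}1]$, and the diagonal of $[2p{+}1]$ is a sum of $[k]\otimes[l]$ with $k+l$ odd; hence in any term of $\mathbf{m}\otimes\Delta\mathbf{m}\otimes\cdots\otimes\Delta\mathbf{m}\otimes\mathbf{m}$ the sequence of parities begins and ends odd, forcing two consecutive odd entries to be fed into some $\chi$, which then vanishes. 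For Case~(3) the same mechanism works once one notes, via K\"unneth and $r<n$, that every summand of a cycle for $\mathfrak{m}$ carries a positive (hence odd) $\Z_q$-degree; your proposed reduction via a K\"unneth splitting of the cap product with $\fv^{sn}$ is heuristic and not obviously well-defined, whereas the componentwise chain-level argument is immediate.
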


Computations of cohomological lower bounds of the $s$-th topological complexity of the real projective spaces and lens spaces have attracted much interest \cite{CGGGL, Davis, FG, Daundkar}.  In Section \ref{sec:genmfld}, we show how these results provide lower bounds of $\TC_s$ for larger families of manifolds (see Proposition \ref{prop:generalmanifold} and Theorem~\ref{zclw_M_Lens}). Then Theorem \ref{intro_upperbound_orientable} enables us to obtain the following exact values:
\begin{theo} Let $M$ be an orientable $n$-dimensional connected closed manifold with maximal Lusternik--Schnirelmann category, that is, $\cat(M)=n$.
\begin{enumerate}
\item If $n\equiv 1$ mod $4$ and $\pi_1(M)=\Z_2$, then $\TC_s(M)=sn-1$ for $s$ sufficiently large.
\item If $n\equiv 1$ mod $2$ and $\pi_1(M)=\Z_p$ where $p\geq 3$ is a prime, then $\TC_s(M)=sn-1$ for $s$ sufficiently large.
\end{enumerate}
\end{theo}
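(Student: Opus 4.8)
The plan is to squeeze $\TC_s(M)$ between the general upper bound for orientable manifolds with finite cyclic fundamental group and a cohomological lower bound transported from lens and projective spaces.

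\emph{Upper bound.} In case (1) we have $\pi_1(M)=\Z_2$ and in case (2) we have $\pi_1(M)=\Z_p$; in either situation $M$ is an orientable $n$-dimensional connected closed manifold with cyclic fundamental group, so Theorem~\ref{intro_upperbound_orientable}(2) gives $\TC_s(M)<sn$, i.e.\ $\TC_s(M)\le sn-1$, for every $s\ge 2$. No largeness of $s$ is needed here; that hypothesis will be used only for the lower bound.

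\emph{Lower bound.} We use the weighted zero-divisor cup-length: $\TC_s(M)\ge\zclw(M)$. The hypothesis $\cat(M)=n$ enters through the Berstein--Schwarz class $\fb$, which it forces to satisfy $\fb_M^{\,n}\neq 0$. The classifying map $M\to B\pi_1(M)$ factors, by cellular approximation, as a map $\bar u\colon M\to B^{(n)}$ followed by the inclusion of the $n$-skeleton $B^{(n)}$ of $B\pi_1(M)$ — here $B^{(n)}=\mathbb{RP}^n$ when $\pi_1(M)=\Z_2$ and $B^{(n)}=L^n(p)$, the standard lens space, when $\pi_1(M)=\Z_p$ — and $\fb_M^{\,n}\neq 0$ makes $\bar u$ injective on mod-$p$ cohomology; hence $\bar u^{\times s}$ is injective on mod-$p$ cohomology as well, being a tensor power of an injective map of $\Z_p$-vector spaces. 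Consequently every nonzero weighted zero-divisor product of $B^{(n)}$ pulls back to a nonzero one of $M$, so $\zclw(M)\ge\zclw(\mathbb{RP}^n)$, respectively $\zclw(M)\ge\zclw(L^n(p))$; this is the content of Proposition~\ref{prop:generalmanifold} and Theorem~\ref{zclw_M_Lens} in Section~\ref{sec:genmfld}. It then remains to see that for $s$ large these quantities equal $sn-1$, and here one uses the cohomological lower bounds for $\TC_s$ of projective and lens spaces from \cite{CGGGL, Davis, FG, Daundkar}, together with the weight-$\ge 2$ class $\fv$ of Cohen--Vandembroucq (detected over $\Z$ by a Bockstein, which in particular gives the lens-space classes $y_i-y_1$ weight $2$). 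Concretely, for $s\gg 0$ one exhibits a product of zero-divisors pulled back from the $s$ Cartesian factors — a monomial in the classes $t_i-t_1$ when $p=2$, and in the classes $x_i-x_1$ and $y_i-y_1$ together with $\fv$ when $p$ is odd — of total weight $sn-1$ that is nonzero in degree $sn-1$.

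\emph{Main obstacle.} The crux is the non-vanishing of this product, which by Lucas' theorem becomes a question about carries in base-$p$ addition. For $p=2$ one checks that weight $sn-1$ is attainable for all large $s$ exactly when $n\equiv 1\pmod 4$: it reduces to writing $n-1$ as a sum of submasks of the binary complement of $n$, which is possible iff the $2^1$-digit of $n$ vanishes, and among odd $n$ this is precisely $n\equiv 1\pmod 4$ — the case $n\equiv 3\pmod 4$ being genuinely obstructed at the (weighted) cup-length level. For $p$ odd the extra room in base-$p$ arithmetic, together with the weight of $\fv$, makes the construction succeed for every odd $n$. Combining $\TC_s(M)\ge sn-1$ for $s$ large with $\TC_s(M)\le sn-1$ yields the stated equality. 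A secondary point is pinning down ``$s$ sufficiently large'': the threshold is governed by the number of Cartesian factors of $M$ needed to carry the required powers of the zero-divisors without exceeding degree $n$ in any single factor — a bound linear in $n$, hence harmless as $s\to\infty$.
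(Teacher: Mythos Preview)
Your approach matches the paper's: the upper bound is Proposition~\ref{cyclic} (equivalently Theorem~\ref{intro_upperbound_orientable}(2)), and the lower bound is obtained by pulling back the (weighted) zero-divisor cup-length from $P^n$ or $L_p^n$ through the factored classifying map, exactly as in Proposition~\ref{prop:generalmanifold} and Theorem~\ref{zclw_M_Lens}, then quoting the computations of Davis \cite{Davis} and Daundkar \cite{Daundkar}. Two small corrections are in order. First, the weight-$2$ mechanism for lens spaces is Farber--Grant's Bockstein argument \cite{FG}, not a class ``$\fv$ of Cohen--Vandembroucq''; in this paper $\fv_{X,s}$ is the canonical class of \cite{EFMO}, which has weight~$1$. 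Second, your assertion that for odd $p$ the construction ``succeeds for every odd $n$'' glosses over the divisibility hypothesis on $\binom{n-1}{(n-1)/2}$ appearing in the paper's precise version, Corollary~\ref{exactvalueZm}: the introductory theorem is a summary, and the exact meaning of ``$s$ sufficiently large'' is deferred to that corollary, which is not vacuously satisfied for all pairs $(n,p)$.
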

See Corollaries \ref{Exorientable} and \ref{exactvalueZm} for a more explicit description of the condition ``$s$ sufficiently large''.

The case of non-orientable manifolds is much more complicated. By \cite[Theorem~1.2(1)]{CV21}, the topological complexity of a non-orientable manifold with abelian fundamental group is always non-maximal.  However, it is well-known that there exist such non-orientable manifolds with maximal $\TC_s$ for $s\geq 3$. For instance, for the real projective plane $P^2$, we have $\TC_3(P^2)=6=3\dim (P^2)$, see \cite{GGGL}. 
Furthermore, 
it has been shown in \cite{CGGGL} and \cite{Davis} that, when $n$ is even, the real projective space $P^n$ satisfies $\TC_s(P^n)=sn$ for $s$ sufficiently large. For a fixed even integer $n$, the sequence $(\TC_s(P^n))_{s\geq 2}$ forms an increasing sequence starting at $\TC_2(P^n)$, equal to the immersion dimension of $P^n$ (\cite{FTY}), and stabilizing to $sn$ when $s$ is sufficiently large. As explained 
in \cite{CGGGL}, it would be very interesting to better understand this sequence. Our methods, developed in Section \ref{no-orient}, permit us to obtain new information in this direction. In particular, in combination with Davis' results \cite{Davis}, when $n=2^r-2$, we have:
\[\TC_s(P^n)\leq sn-1 \ \mbox{for even } s\leq n, \quad \TC_n(P^n)=n^2-1, \quad \mbox{and} \quad \TC_s(P^n)=sn \ \mbox{for } s>n.\]
As before, this result can be extended to a larger family of manifolds:
\begin{theo} Let $M$ be a non-orientable $n$-dimensional connected closed manifold with $\pi_1(M)=\mathbb{Z}_2$ and $n=2^{r}-2$ where $r\geq 3$. Then, for any even $s \leq n$, we have $\TC_s(M)<sn$.
	If moreover  $\cat{M}=n$, then $\TC_n(M)=n^2-1$ and $\TC_s(M)=sn$ for $s>n$. 
\end{theo}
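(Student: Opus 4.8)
\emph{Strategy.} The two assertions are proved by separate arguments. The non-maximality $\TC_s(M)<sn$ for even $s\le n$ follows from the non-maximality criterion developed in Section~\ref{no-orient} for $\pi=\Z_2$, combined with the fact that $\TC_s(P^n)<sn$ in this range \cite{Davis}. The exact values, under the extra hypothesis $\cat M=n$, then follow by transporting Davis' cohomological lower bounds for $\TC_s(P^n)$ to $M$ via Proposition~\ref{prop:generalmanifold} and Theorem~\ref{zclw_M_Lens}.

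\emph{The upper bound.} Let $u\colon M\to B\Z_2=P^\infty$ classify $\pi_1(M)$. As $M$ is non-orientable, its orientation character is the nontrivial homomorphism $\Z_2\to\{\pm1\}$, so $u$ sends the twisted fundamental class to a class $u_*[M]\in H_n(\Z_2;\Z^-)$; since $n$ is even, this group is $\Z_2$, whose nonzero element is $u_*[P^n]$. By \cite{EFMO}, for a finite complex $X$ of dimension $n$ one has $\TC_s(X)=sn$ if and only if the top power $\fv_s^{sn}$ of the generalized Costa--Farber class $\fv_s$ is non-zero; here $\fv_s$ is a degree-one cohomology class of $X^s$ depending only on $\pi_1(X^s)$, hence natural under $u$. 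Rewriting this non-vanishing, through Poincar\'e duality on the closed $sn$-manifold $M^s$, as a statement in the twisted homology of $\Z_2^{s-1}$ — which is exactly what the non-maximality criterion of Section~\ref{no-orient} (the non-orientable counterpart of Proposition~\ref{suff-cond} and Corollary~\ref{Cor-orientable case}) provides — shows that maximality of $\TC_s(M)$ depends on $M$ only through $u_*[M]$; concretely,
\[
\TC_s(M)=sn\ \Longleftrightarrow\ u_*[M]\neq0\ \text{ and }\ \TC_s(P^n)=sn .
\]
For $n=2^r-2$ with $r\ge3$ and $s\le n$ even we have $\TC_s(P^n)<sn$ \cite{Davis}, so the right-hand side fails for every such $M$, whatever the value of $u_*[M]$; hence $\TC_s(M)<sn$.

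\emph{The exact values.} Suppose now $\cat M=n=\dim M$. Then $(u^*x)^n\neq0$ in $H^n(M;\Z_2)$, where $x$ generates $H^1(\Z_2;\Z_2)$, so $u^*$ is injective on $\Z_2[x]/(x^{n+1})$ and, over the field $\Z_2$, $(u^{\times s})^*$ is injective on the subalgebra of $H^*((P^n)^s;\Z_2)$ generated by the coordinate one-dimensional classes — the subalgebra containing Davis' weighted zero-divisor products. Proposition~\ref{prop:generalmanifold} and Theorem~\ref{zclw_M_Lens} then yield $\zclw(M)\ge\zclw(P^n)$, so $M$ inherits every cohomological lower bound Davis establishes for $\TC_s(P^n)$. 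For $s>n$ this gives $\TC_s(M)\ge\TC_s(P^n)=sn$, and since $\TC_s(M)\le sn$ always \cite{BGRT}, we get $\TC_s(M)=sn$. For $s=n$ it gives $\TC_n(M)\ge n^2-1$, while the upper bound above (with the even value $s=n$) gives $\TC_n(M)<n^2$; hence $\TC_n(M)=n^2-1$.

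\emph{Main obstacle.} The heart of the matter is the reduction in the second step: one must check carefully that the maximality obstruction for $M$ is the $u^{\times s}$-pullback of the universal obstruction over $(B\Z_2)^s$, and that under Poincar\'e duality on $M^s$ it is detected by the single Kronecker pairing against $(u_*[M])^{\times s}$. This amounts to keeping exact track of the twisted orientation coefficient systems and of the relative (diagonal) structure built into $\fv_s$, and is the technical content of Section~\ref{no-orient}. A more routine issue is to line up the hypotheses of Davis' theorems — $n=2^r-2$, $s$ even with $s\le n$ for the upper bound, and his weighted cup-length computations for the lower bounds — with the shape of these criteria, and to confirm that his zero-divisor classes do lie in the subalgebra on which $(u^{\times s})^*$ is injective.
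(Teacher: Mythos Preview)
Your reduction of the upper bound to the case $M=P^n$ is correct: since $H_n(\Z_2;\widetilde{\Z})\cong\Z_2$ for even $n$, the class $u_*[M]$ is either zero (and then Proposition~\ref{suff-cond} gives $\TC_s(M)<sn$ trivially) or equals $u_*[P^n]$, so the homological obstruction $^s\chi_*(\mathfrak{m}^{\times s})$ coincides with that of $P^n$. But the step ``for $n=2^r-2$ and even $s\le n$ we have $\TC_s(P^n)<sn$ \cite{Davis}'' is the gap: Davis computes the zero-divisor cuplength $\zcl_s(P^n)$, which yields only \emph{lower} bounds for $\TC_s$. The inequality $\zcl_s(P^n)<sn$ does not imply $\TC_s(P^n)<sn$. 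The non-maximality $\TC_s(P^n)<sn$ in this range is precisely the new content of Section~\ref{no-orient}: one must show that the chain-level class $^s\chi_\bullet([n,\ldots,n])$ represents zero in $H_{sn}(\pi^{s-1};\widehat{\Z})$, which the paper does via the binomial-coefficient analysis of Proposition~\ref{coeficientesbinomiales} and the torsion argument of Proposition~\ref{calculofinal}. Your reduction therefore saves no work---the paper's computation is already independent of $M$ once $\mathfrak{m}$ is identified with the generator $[n]$---and the hard step still has to be carried out.

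Two smaller points on the exact-value part. First, Theorem~\ref{zclw_M_Lens} concerns $\pi_1=\Z_p$ for odd primes $p$ and is irrelevant here; Proposition~\ref{prop:generalmanifold} alone (with Davis' computation $\zcl_s(P^n)=sn$ for $s>n$ and $\zcl_n(P^n)=n^2-1$) gives the required lower bounds, exactly as in Corollary~\ref{p6}. Second, you write $\TC_s(M)\ge\TC_s(P^n)$, but what Proposition~\ref{prop:generalmanifold} actually gives is $\TC_s(M)\ge\zcl_s(P^n;\Z_2)$; this happens to suffice because $\zcl_s(P^n)=sn$ for $s>n$, but the two statements are not the same.
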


\subsection*{Notation and conventions} For a topological space $Y$, 
we use $\dim(Y)$ to denote the homotopy dimension of $Y$. The integral homology of $Y$ is denoted by $H_*(Y)$, and the reduced homology by $\widetilde{H}_*(Y)$. If $\pi=\pi_1(Y)$, we denote the cohomology of $Y$ with coefficients in the local system 
determined by the $\Z[\pi]$-module $V$ by $H^*(Y;V)$.

For an element $a$ of a group $\pi$, we often denote the inverse of $a$ by $\overline{a}$.

We use the reduced version of sectional category throughout, so that for a fibration $p\colon E \to B$, when finite, $\secat(p\colon E \to B)$ is one less than the minimal number of open sets covering $B$, over each of which the fibration admits a section.

\section{A \texorpdfstring{$\TC_s$}{higher TC} canonical class}\label{secintro}

Canonical cohomology classes for higher topological complexity were recently introduced and studied by 
Espinosa Baro, Farber, Mescher, and Oprea, see \cite{EFMO}. In this brief preliminary section, with this work as a general reference (\cite[\S\S 5--6]{EFMO} in particular), we recall and discuss aspects of these classes which will be of subsequent use.

Let $X$ be a CW-complex.
The standard dimensional upper bound for higher topological complexity is 
\begin{equation}\label{estimacioninicial}
\TC_s(X)\leq s\dim(X).
\end{equation}

Although~(\ref{estimacioninicial}) can be improved in terms of the connectivity of $X$, we are interested in the improvements coming from obstruction-theory techniques in cases where $X$ is not simply connected.  {A fundamental concept in this context is the notion of homological obstruction as considered in Schwarz' monograph~\cite{Sch58}.
Recall that the fiber of $e_s\colon PX\to X^s$ is $\Omega X^{s-1}=(\Omega X)^{s-1}$.
 In \cite{EFMO}, the homological obstruction for sectioning $e_s$ over the 1-dimensional skeleton of $X^s$ is identified with a canonical twisted class,
\begin{equation}\label{ppp}
\fv_{X,s}\in H^1(X^s;I_s(\pi^{s-1}))= H^1(X^s;\widetilde{H}_0(\Omega X^{s-1})),
\end{equation}
where $\pi:=\pi_1(X)$ and $I_s(\pi^{s-1})$ denotes the augmentation ideal of $\pi^{s-1}$, viewed as a $\Z[\pi^s]$-submodule of $\Z[\pi^{s-1}]$. Here the action of $\pi^s$ on $I_s(\pi^{s-1})$, which corresponds to the 
monodromy associated with the fibration $e_s$, is given by 
\[
(a_1,\ldots,a_s)\cdot(b_1,\ldots,b_{s-1})=(a_1b_1\overline{a_2},a_2b_2\overline{a_3},\ldots,a_{s-1}b_{s-1}\overline{a_s}).
\]

The class $\fv_{X,s}$ can also be described as the cohomology class induced by the crossed homomorphism
 $\nu_{X,s}\colon\pi^s\to I_s(\pi^{s-1})$ given by $$\nu_{X,s}(a_1,\ldots,a_s)=\left(a_1\overline{a_2},a_2\overline{a_3},\ldots,a_{s-1}\overline{a_s}\right)-1_{s-1},$$
where $1_{s-1}$ is the unit element of $\pi^{s-1}$. Obstruction-theoretic arguments lead then to the following result: 
\begin{theorem}[\cite{EFMO}] \label{EFMO} Let $X$ be a CW-complex of dimension $n\geq 2$. Then $\TC_s(X)< sn$ if and only if the $sn$-th cup-power $\fv_{X,s}^{sn}=0$.
\end{theorem}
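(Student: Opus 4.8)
The plan is to run Schwarz's obstruction-theoretic analysis of sectional category, organised around the fiberwise join. First I would invoke Schwarz's criterion \cite{Sch58}: for a fibration $f$ over a CW-complex, $\secat(f)\le k$ if and only if the $(k+1)$-fold fiberwise join of $f$ with itself admits a global section. Taking $f=e_s$ and $k=sn-1$, the inequality $\TC_s(X)=\secat(e_s)<sn$ is therefore equivalent to the existence of a section over $X^s$ of the $sn$-fold fiberwise join $p$ of $e_s$. The fiber of $p$ is the $sn$-fold join $(\Omega X^{s-1})^{*sn}$; since $\Omega X^{s-1}$ is nonempty, this join is $(sn-2)$-connected, and the hypothesis $n\ge 2$ makes it at least $2$-connected. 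As $\dim(X^s)=sn$, the only obstruction to extending a section of $p$ across all of $X^s$ that can fail to vanish is the primary one, $\mathfrak o(p)\in H^{sn}(X^s;\pi_{sn-1}((\Omega X^{s-1})^{*sn}))$: the lower obstructions vanish by the connectivity of the fiber and the higher ones for dimensional reasons. Thus it suffices to identify $\mathfrak o(p)$, up to sign and a canonical change of coefficients, with $\fv_{X,s}^{sn}$.

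Next I would pin down the coefficient module. The $sn$-fold join is at least $2$-connected, so the Hurewicz theorem gives $\pi_{sn-1}((\Omega X^{s-1})^{*sn})\cong\widetilde H_{sn-1}((\Omega X^{s-1})^{*sn})$, and the reduced homology join formula collapses the right-hand side to $\widetilde H_0(\Omega X^{s-1})^{\otimes sn}$, the tensor product over $\Z$. One has to check this isomorphism is $\pi^s$-equivariant, with $\pi^s=\pi_1(X^s)$ acting diagonally on the $sn$-fold tensor power through the monodromy action on each factor recalled in \eqref{ppp}. With this identification, $\mathfrak o(p)$ and $\fv_{X,s}^{sn}$ both lie in $H^{sn}(X^s;\widetilde H_0(\Omega X^{s-1})^{\otimes sn})$, so the comparison makes sense. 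I would also record here the input I am permitted to assume from Section~\ref{secintro}: the primary obstruction to sectioning $e_s$ over the $1$-skeleton of $X^s$ is precisely the canonical class $\fv_{X,s}$, represented by the crossed homomorphism $\nu_{X,s}$.

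The heart of the proof is the multiplicativity of the primary obstruction under fiberwise join: for a fibration $q$ whose fiber is sufficiently highly connected, the primary obstruction to sectioning the $m$-fold fiberwise join $q^{*m}$ equals, up to sign and the canonical identification of coefficients above, the $m$-fold cup product $\mathfrak o(q)^{m}$. This is Schwarz-style machinery; to establish it with the twisted coefficients present here I would argue at the cochain level, using a skeleton-by-skeleton model of the total space of $q^{*m}$ and tracking how the cup product in cohomology with local coefficients --- taking values in the $m$-fold tensor power of the coefficient module with its diagonal $\pi^s$-action --- reproduces the join obstruction. Applying this with $q=e_s$ and $m=sn$ yields $\mathfrak o(p)=\pm\,\fv_{X,s}^{sn}$, which combined with the first paragraph gives the biconditional $\TC_s(X)<sn \iff \fv_{X,s}^{sn}=0$.

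I expect the genuine obstacle to be exactly this product step. For untwisted coefficients the multiplicativity of obstructions under fiberwise join is classical, but here the coefficient systems are nontrivial, the relevant cup product lands in a tensor power of $\Z[\pi^s]$-modules with diagonal action, and one must be scrupulous that no secondary obstructions intervene --- which is precisely why the connectivity estimate for the $sn$-fold join and the equality $\dim(X^s)=sn$ are doing essential work, and why the low-dimensional hypothesis $n\ge 2$ is imposed (it guarantees the join is simply connected, so the Hurewicz identification and the well-definedness of the primary obstruction hold without caveats). Everything else --- Schwarz's criterion, the Hurewicz and join computations, and the identification of $\fv_{X,s}$ as the first obstruction for $e_s$ --- is either standard or granted by the excerpt.
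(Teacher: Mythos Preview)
The paper does not prove this theorem at all: it is quoted from \cite{EFMO} with only the parenthetical remark that ``obstruction-theoretic arguments'' lead to it, so there is no internal proof to compare against. Your outline is the standard Schwarz argument and is, as far as one can tell from the citation, exactly what \cite{EFMO} does --- Schwarz's join criterion reduces $\TC_s(X)<sn$ to sectioning the $sn$-fold fiberwise join, the $(sn-2)$-connectivity of the join fiber together with $\dim X^s=sn$ leaves a single primary obstruction in degree $sn$, Hurewicz and the join homology formula identify the coefficient module with $\widetilde H_0(\Omega X^{s-1})^{\otimes sn}=I_s^{sn}(\pi^{s-1})$, and multiplicativity of primary obstructions under fiberwise join identifies the obstruction with $\fv_{X,s}^{sn}$. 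That is a correct sketch.

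One phrasing to tighten: you frame the key step as ``for a fibration $q$ whose fiber is sufficiently highly connected, the primary obstruction to sectioning $q^{*m}$ equals $\mathfrak o(q)^m$'', but $e_s$ has fiber $\Omega X^{s-1}$, which is not even connected when $\pi_1(X)\neq 1$. The statement you actually need is Schwarz's product formula: the primary obstruction to sectioning a fiberwise join $q_1*\cdots*q_m$ is (up to sign and the coefficient identification) the cup product of the primary obstructions of the $q_i$, with no connectivity hypothesis on the individual fibers. The high connectivity of the $sn$-fold join is used only downstream, to guarantee that the primary obstruction is the \emph{only} obstruction and that Hurewicz applies. With that adjustment your argument is complete; the hypothesis $n\ge2$ is indeed what makes the join simply connected and the whole scheme go through without low-dimensional anomalies.
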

Here, 
$\fv_{X,s}^{sn}$ lies in the cohomology of $X^s$ with coefficients in 
the $sn$-th tensor power of $I_s(\pi^{s-1})$ endowed with the diagonal action of $\pi_s$, denoted by $I_s^{sn}(\pi^{s-1})$.}

The construction of the class $\fv_{X,s}$ generalizes the $\TC$ canonical class of \cite{MR2649230} and provides for $\TC_s$ an analogue of the classical Berstein-Schwarz class $\fb_X\in H^1(X;I(\pi))$. Note that, in this case, $I(\pi)$ is the augmentation ideal of $\pi$ endowed with the left $\Z[\pi]$-module structure induced by the multiplication of $\pi$. As is well-known, the Lusternik--Schnirelmann category of $X$, $\cat(X)$, satisfies $\cat(X)=\dim X$ if and only if  $\fb_X^{\dim(X)}\neq 0$ (see \cite{Berstein}, \cite{Sch58} and \cite{DrRud} for a proof including the case $\dim(X)= 2$).

\begin{remark} \label{rem:crossed}
We conclude this section with a brief remark regarding the functoriality of these classes. For $\pi=\pi_1(X)$, applying the above result to the classifying space $B\pi$ yields a crossed homomorphism and associated cohomology class, which we denote by $\nu_{\pi,s}$ and  $\fv_{\pi,s}$ respectively.

Recall that if $f:X\to Y$ is a map, and $A$ is a $\Z[\pi_1(Y)]$-module, then $f^*(A)$ denotes the $\Z[\pi_1(X)]$-module whose underlying abelian group is $A$ and the action of $g\in \pi_1(X)$ on $a\in A$ is given by $g\cdot a:=\pi_1(f)(g)\cdot a$. 
Taking $f\colon X \to Y = B\pi$ to be a classifying map, the isomorphism ${I}_s(\pi^{s-1})\cong (f^s)^*{I}_s(\pi^{s-1})$ yields
\[
{\mathfrak v}_{X,s}=(f^s)^*{\mathfrak v}_{\pi,s}.
\]
Similar considerations apply to the Berstein-Schwarz class $\fb_\pi\in H^1(\pi;I(\pi))$ (resp., $\fb_X\in H^1(X;I(\pi))$), induced by the crossed homomorphism $\beta_\pi\colon \pi \to I(\pi)$, $\alpha \mapsto \alpha-1$. Namely, the isomorphism $I(\pi)\cong f^*I(\pi)$ yields $\fb_X=f^*\fb_\pi$. 
\end{remark}

\section{Abelian fundamental group}

In this section we extend to higher topological complexity some results of \cite{CV21} which will be useful for our computations. The arguments are therefore similar to those of \cite{CV21} as well as some of \cite{Dranishnikov}.

Assume from now on that $\pi=\pi_1(X)$ is abelian. 
We consider the group homomorphism $^s\chi: \pi^s \to \pi^{s-1}$ given by
\[^s\chi(a_1,\dots,a_s)=(a_1\overline{a_2}, a_2\overline{a_3},\dots , a_{s-1}\overline{a_s}).\]
Note that the $\Z[\pi^s]$-module $^s\chi^*(I(\pi^{s-1}))$ is exactly the $\Z[\pi^s]$-module ${I}_s(\pi^{s-1})$. With the notation regarding canonical classes, Berstein-Schwarz classes, and crossed homomorphisms of the previous section,  we also have, for any $(a_1,\dots,a_s)\in \pi^s$,
\[\nu_{\pi,s}(a_1,\dots,a_s)=\beta_{\pi^{s-1}}(a_1\overline{a_2}, a_2\overline{a_3},\dots , a_{s-1}\overline{a_s})=\beta_{\pi^{s-1}}({^s\chi}(a_1,\dots,a_s)).\]
We then have ${\mathfrak v}_{\pi,s}={{}^s\chi}^*{\mathfrak b}_{\pi^{s-1}}$ in $H^1(\pi^s;{I}_s(\pi^{s-1}))=H^1(B\pi^s;{I}_s(\pi^{s-1}))$, and, for any $k$,
\[{\mathfrak v}^k_{X,s}= (\gamma^s)^*{\mathfrak v}^k_{\pi,s}= (\gamma^s)^*{(^s\chi)}^*{\mathfrak b}^k_{\pi^{s-1}} \,\, \mbox{ in } H^k(X^s;{I}_s^k(\pi^{s-1}))\]
where $\gamma:X\to B\pi$ is a classifying map.

In order to establish our results, it is useful to consider the cofiber of the diagonal map {$\Delta_s=\Delta_s^X:X\to X^s$}. We denote it by $C_{\Delta_s}(X)$.
{We will more generally use the notation $\Delta_s^Z:Z\to Z^s$ to denote the $s$-diagonal of a set $Z$ and suppress the superscript when the context is clear.}    

\begin{proposition}\label{nec-cond}
Let $X$ be an $n$-dimensional CW-complex with $n\geq 2$. Suppose that $\pi=\pi_1(X)$ is abelian and let $\gamma:X\to B\pi$ be a classifying map. Then for any $s\geq 2$ we have
\begin{enumerate}
	\item $\mathfrak{v}_{X,s}=q^*\mathfrak{b}_{C_{\Delta_s}(X)}$ in $H^1(X^s; I_s(\pi^{s-1}))$ where $q: X^s\to C_{\Delta_s}(X)$ is the identification map.
	\item $\TC_s(X)<sn$ if and only if $\cat(C_{\Delta_s}(X))<sn$.
	\item If $\TC_s(X)< sn$ then, for any $\Z[\pi^{s-1}]$-module $A$ and for any homology class $\mathsf{c}\in H_{sn}(X^s;(^s\chi\gamma^s)^*A)$, the class $\mathfrak{c}=\gamma^s_*(\mathsf{c})\in H_{sn}(\pi^s;(^s\chi)^*A)$ satisfies $^s\chi_*(\mathfrak{c})=0$.
\end{enumerate}
\end{proposition}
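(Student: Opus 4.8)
The plan is to prove the three parts in order, using the functoriality facts assembled just before the statement together with Theorem~\ref{EFMO} and the Berstein--Schwarz characterization of $\cat$.

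\textbf{Part (1).} The key observation is that the diagonal $\Delta_s\colon X\to X^s$ induces on fundamental groups the diagonal $\Delta_s^\pi\colon\pi\to\pi^s$, and that $^s\chi\circ\Delta_s^\pi$ is the trivial homomorphism $\pi\to\pi^{s-1}$ (since $a\overline{a}=1$ in the abelian group $\pi$). Consequently the composite $X\xrightarrow{\Delta_s}X^s\xrightarrow{q}C_{\Delta_s}(X)$ is null-homotopic, and more to the point the map on fundamental groups induced by $q$ identifies $\pi_1(C_{\Delta_s}(X))$ with $\pi^{s-1}$ in such a way that the local system $I_s(\pi^{s-1})={}^s\chi^*(I(\pi^{s-1}))$ on $X^s$ is pulled back from the Berstein--Schwarz local system $I(\pi_1(C_{\Delta_s}(X)))$ on $C_{\Delta_s}(X)$ via $q$. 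Then $\mathfrak{v}_{X,s}=(\gamma^s)^*({}^s\chi)^*\mathfrak{b}_{\pi^{s-1}}$ (recorded above) must be compared with $q^*\mathfrak{b}_{C_{\Delta_s}(X)}$; both are degree-$1$ twisted classes, so it suffices to identify the corresponding crossed homomorphisms $\pi^s\to I_s(\pi^{s-1})$, and both are $(a_1,\dots,a_s)\mapsto {}^s\chi(a_1,\dots,a_s)-1_{s-1}=\beta_{\pi^{s-1}}({}^s\chi(a_1,\dots,a_s))$. (This is the mod-$\Delta_s$ analogue of the classical statement for $\TC_2$ in \cite{CV21}, and is essentially the assertion that $C_{\Delta_s}(X)\to B\pi^{s-1}$ classifying map composed appropriately recovers $^s\chi\circ\gamma^s$.)

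\textbf{Part (2).} This follows by combining Part (1) with Theorem~\ref{EFMO} and the Berstein--Schwarz criterion $\cat(Y)=\dim(Y)\iff\mathfrak{b}_Y^{\dim Y}\neq 0$. First note $\dim(C_{\Delta_s}(X))=\dim(X^s)=sn$ (the cofiber of a map into an $sn$-complex, with $X^s$ itself $sn$-dimensional and the cone on $X$ of dimension $n+1\le sn$ contributing nothing higher). By Part (1), $\mathfrak{v}_{X,s}^{sn}=q^*\mathfrak{b}_{C_{\Delta_s}(X)}^{sn}$. Since $q\colon X^s\to C_{\Delta_s}(X)$ is the quotient by the cone on $X$, the induced map $q^*$ in cohomology with any local coefficients is injective in degrees $>1$ (or at least in the top degree $sn$, where one can use that the relative cohomology $H^*(C_{\Delta_s}(X),X^s)\cong \widetilde H^*(\Sigma X)$ vanishes above degree $n+1<sn$), so $\mathfrak{v}_{X,s}^{sn}=0\iff\mathfrak{b}_{C_{\Delta_s}(X)}^{sn}=0$. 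Then Theorem~\ref{EFMO} gives $\TC_s(X)<sn\iff\mathfrak{v}_{X,s}^{sn}=0$ and the Berstein--Schwarz criterion gives $\cat(C_{\Delta_s}(X))<sn\iff\mathfrak{b}_{C_{\Delta_s}(X)}^{sn}=0$.

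\textbf{Part (3).} Assume $\TC_s(X)<sn$; by Theorem~\ref{EFMO} this means $\mathfrak{v}_{X,s}^{sn}=0$ in $H^{sn}(X^s;I_s^{sn}(\pi^{s-1}))$, equivalently, by the displayed functoriality, $(\gamma^s)^*({}^s\chi)^*\mathfrak{b}_{\pi^{s-1}}^{sn}=0$. Now take a $\Z[\pi^{s-1}]$-module $A$ and a class $\mathsf c\in H_{sn}(X^s;({}^s\chi\gamma^s)^*A)$. The strategy is a cap-product/duality argument in group (co)homology: the Berstein--Schwarz class $\mathfrak{b}_{\pi^{s-1}}^{sn}$ is, up to the natural pairing, the universal obstruction detecting nontriviality of degree-$sn$ homology classes of $\pi^{s-1}$ under pushforward along $^s\chi\circ\gamma^s$. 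Concretely, push $\mathsf c$ forward to $\mathfrak c=(\gamma^s)_*(\mathsf c)\in H_{sn}(\pi^s;({}^s\chi)^*A)$; we must show $({}^s\chi)_*(\mathfrak c)=0$ in $H_{sn}(\pi^{s-1};A)$. Here one uses the standard fact (going back to Schwarz, and used in \cite{CV21,Dranishnikov}) that for a group $G$ and a $\Z[G]$-module $M$, a class $d\in H_k(G;M)$ vanishes if and only if its image under every homomorphism induced by pairing with powers of $\mathfrak b_G$ vanishes — more precisely, $d=0$ whenever the slant/cap product $\mathfrak b_G^k\cap(\text{lift of }d)$ vanishes, which it does here because $({}^s\chi)^*\mathfrak b_{\pi^{s-1}}^{sn}$ dies already after pulling back to $X^s$ (hence its cap product with $\mathsf c$ vanishes), and cap products commute with the pushforwards $(\gamma^s)_*$ and $({}^s\chi)_*$. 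Running this identification carefully yields $({}^s\chi)_*(\mathfrak c)=0$.

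The main obstacle is Part (3): one must set up the cap-product naturality diagram relating $H^{sn}(-;I^{sn})$-classes, $H_{sn}(-;A)$-classes, and the coefficient pairing $I(\pi^{s-1})^{\otimes sn}\otimes A\to A$ (diagonal action), and verify that the vanishing of $({}^s\chi\gamma^s)^*\mathfrak b_{\pi^{s-1}}^{sn}$ forces $({}^s\chi)_*(\gamma^s)_*\mathsf c=0$ rather than merely $(\gamma^s)_*\mathsf c$ lying in the kernel of pairing with the pulled-back class. This is exactly the point where the abelian hypothesis and the factorization $\nu_{\pi,s}=\beta_{\pi^{s-1}}\circ{}^s\chi$ are essential, since they let us replace the intractable module $I_s(\pi^{s-1})$ by the genuine Berstein--Schwarz module $I(\pi^{s-1})$ over the honest group $\pi^{s-1}$, for which the detection principle is classical. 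The remaining parts are comparatively routine dimension counts and crossed-homomorphism identifications.
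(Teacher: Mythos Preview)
Your Parts (1) and (2) follow the same route as the paper: build the commutative square with the induced map $\xi\colon C_{\Delta_s}(X)\to B\pi^{s-1}$, identify $\pi_1(C_{\Delta_s}(X))\cong\pi^{s-1}$ (the paper does this via Van~Kampen and the exact sequence $1\to\pi\to\pi^s\to\pi^{s-1}\to1$, which you leave implicit), recognize $\xi$ as a classifying map so that $\mathfrak b_{C_{\Delta_s}(X)}=\xi^*\mathfrak b_{\pi^{s-1}}$, and then use the top-degree isomorphism $q^*$ together with Theorem~\ref{EFMO} and the Berstein--Schwarz criterion. Your sketch is correct but light on exactly why $\pi_1(q)$ realizes $^s\chi$; the paper makes this explicit via the diagram.

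For Part (3) you take a genuinely different route. The paper's argument is short and geometric: from Part (2) one has $\cat(C_{\Delta_s}(X))<sn$, so the classifying map $\xi$ factors up to homotopy through an $(sn-1)$-dimensional complex, hence $\xi_*$ annihilates all degree-$sn$ homology; since the diagram from Part (1) gives $^s\chi_*(\mathfrak c)=\xi_*q_*(\mathsf c)$, the conclusion is immediate. You instead bypass the cofiber and run a cap-product argument, using naturality $f_*(f^*u\cap c)=u\cap f_*c$ together with the injectivity of $\mathfrak b_{G}^{k}\cap(-)\colon H_k(G;M)\to H_0(G;I^k(G)\otimes M)$ for $G=\pi^{s-1}$, $k=sn$. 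That injectivity is true (iterate the connecting map of $0\to I\otimes M\to\Z G\otimes M\to M\to 0$ and use Shapiro's lemma to see $H_{>0}(G;\Z G\otimes M)=0$), but it is not quite the ``standard fact'' you cite from \cite{CV21,Dranishnikov}: those sources use Poincar\'e duality or the cofiber factorization rather than this homological detection principle, so you should state and prove the injectivity explicitly. Once that lemma is in hand your argument is valid; the paper's approach is more economical because it recycles the map $\xi$ already built, while yours is more algebraic and independent of Parts (1)--(2).
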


\begin{proof}

First observe that the homomorphism $^s\chi {\circ\Delta_s^{\pi}}$ is trivial. Consequently, the map $B^s\chi \circ B{{\Delta_s^{\pi}}}$ obtained after applying the functor $B$ is also trivial. By identifying $B\pi^s$ with $(B\pi)^s$ and $B{\Delta_s^{\pi}}$ with $\Delta_s^{B\pi}$, we have a commutative diagram of the following form	
	\[\xymatrix{
			X \ar[r]^{\Delta_s^X} \ar[d]_{\gamma} &	X^s \ar[d]_{\gamma^s} \ar[r]^{q} & C_{\Delta_s}(X) \ar[d]_{\xi}\\
			B\pi  \ar[r]^{\Delta_s^{B\pi}} & (B\pi)^s \ar[r]^{{B^s\chi}} & B\pi^{s-1}
		}\]
	where $\xi$ is induced by the quotient property.
	Since $\pi$ is abelian, we have an exact sequence $1\to \pi \stackrel{\Delta_s^{\pi}}{\longrightarrow} {\pi^s} \stackrel{^s\chi}{\longrightarrow} {\pi^{s-1}} \to 1$ and, using the Van Kampen theorem, we can see that $\pi_1(C_{\Delta_s}(X))=\pi^{s-1}$ and that $\pi_1(\xi)$ is an isomorphism. Consequently $\xi$ is a classifying map and the Berstein-Schwarz class of $C_{\Delta_s}(X)$ is given by
			${\mathfrak b}_{C_{\Delta}}=\xi^*{\mathfrak b}_ {\pi^{s-1}}$. By the commutativity of the diagram we then get $q^*(I(\pi^{s-1}))\cong {^s\chi}^*(I(\pi^{s-1}))\cong I_s(\pi^{s-1})
			$  and $q^*{\mathfrak b}_{C_{\Delta_s}(X)}=\mathfrak{v}_{X,s}$ as claimed in the first item. 
   
The equality established above implies that  $q^*{\mathfrak b}^{sn}_{C_{\Delta_s}(X)}=\mathfrak{v}^{sn}_{X,s}$. 
   For dimensional reasons, the map $q^*: H^{sn}(C_{\Delta_s}(X);I(\pi^{s-1}))\to H^{sn}(X^s;I_s(\pi^{s-1}))$ is an isomorphism. We therefore have ${\mathfrak b}^{sn}_{C_{\Delta_s}(X)}=0$ if and only if $\mathfrak{v}^{sn}_{X,s}=0$, which implies the second item. 
   
   We now prove the last item. Let $\mathsf{c}\in H_{sn}(X^s;(^s\chi\gamma^s)^*A)$ be a nonzero class and let $\mathfrak{c}=\gamma^s_*(\mathsf{c})$. We have $^s\chi_*(\mathfrak{c})=\xi_*q_*(\mathsf{c})$. Note that $q_*(\mathsf{c})$ is a homology class of degree $sn$. Since $\TC_s(X)< sn$ we have $\cat(C_{\Delta_s}(X))<sn$. Therefore the classifying map $\xi$ factors up to homotopy through an $(sn-1)$-dimensional space. Consequently, $\xi_*q_*(\mathsf{c})=0$ and the result follows. \end{proof}

\begin{remark} {In the situation of Proposition \ref{nec-cond}, if $A$ is a trivial $\Z[\pi^{s-1}]$-module and $\mathsf{c}\in H_{sn}(X^s;A)$ is an element such that the class $\mathfrak{c}=\gamma^s_*(\mathsf{c})\in H_{sn}(\pi^s;A)$ satisfies $^s\chi_*(\mathfrak{c})\neq 0$ then $\TC_s(X)=sn$.}
\end{remark}
			
Item (3) of Proposition \ref{nec-cond} is sharp under reasonably general conditions. Let $M$ be an $n$-dimensional connected closed manifold with fundamental group $\pi=\pi_1(M)$ and let $\omega=\omega_M: \pi\to \{\pm 1\}$ be the homomorphism determined by the first Stiefel-Whitney class of $M$. Recall that the orientation module of $M$, denoted by $\widetilde{\Z}=\widetilde{\Z}_M$, is the abelian group $\Z$ given with a structure of $\Z[\pi]$-module determined by $a\cdot t=\omega(a)t$ for $a\in \pi$, $t\in \Z$. Note that $\widetilde{\Z}_{M^s}=\widetilde{\Z}_{M}^{\otimes s}$, which additively is $\Z$ with $\pi^s$ action given by $(a_1,a_2,\dots,a_s)t=\omega(a_1)\omega(a_2)\cdots \omega(a_s)t$. 

\begin{proposition} \label{suff-cond} Let $M$ be an $n$-dimensional connected closed manifold with $n\geq 2$ and $\pi=\pi_1(M)$ abelian. Assume there is a $\Z[\pi^{s-1}]$-module $A$ such that {the $\Z[\pi^s]$-modules $^s\chi^*(A)$ and $\widetilde{\Z}^{\otimes s}$ are isomorphic.} Then the following two conditions are equivalent:
\begin{enumerate}
\item The class $\mathfrak{m}:=\gamma_*([M])\in H_n(\pi;\widetilde{\Z})$ satisfies $^s\chi_*(\mathfrak{m}^{\times s}) =0$ in $H_{sn}(\pi^{s-1};A)$.
\item $\TC_s(X)<sn$. 
\end{enumerate}
\end{proposition}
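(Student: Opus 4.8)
The plan is to reduce the statement to the vanishing of a single cohomology class on $M^s$ and then to detect that vanishing via Poincaré duality. By Theorem \ref{EFMO} applied to $X=M$, the inequality $\TC_s(M)<sn$ is equivalent to $\mathfrak{v}_{M,s}^{\,sn}=0$ in $H^{sn}(M^s;I_s^{sn}(\pi^{s-1}))$. So it suffices to show that $\mathfrak{v}_{M,s}^{\,sn}=0$ if and only if ${}^s\chi_*(\mathfrak{m}^{\times s})=0$ in $H_{sn}(\pi^{s-1};A)$. Throughout I will use the commutative diagram from the proof of Proposition \ref{nec-cond}, which in particular gives $\xi\circ q=B^s\chi\circ\gamma^s$ as maps $M^s\to B\pi^{s-1}$, together with the fact, also established there, that $q^*$ is an isomorphism in degree $sn$ (this uses $n\ge 2$, so that $H^{sn-1}(M;-)=0$).

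For the implication $\TC_s(M)<sn \Rightarrow (1)$ I would simply invoke Proposition \ref{nec-cond}(3). The hypothesis that ${}^s\chi^*(A)$ and $\widetilde{\Z}^{\otimes s}$ are isomorphic as $\Z[\pi^s]$-modules says precisely that the $\pi_1(M^s)=\pi^s$-action on the orientation module $\widetilde{\Z}_{M^s}=\widetilde{\Z}_M^{\otimes s}$ factors through ${}^s\chi$, so that the twisted fundamental class may be viewed as an element $\mathsf{c}=[M^s]\in H_{sn}(M^s;({}^s\chi\gamma^s)^*A)$. Naturality of the homology cross product gives $\gamma^s_*(\mathsf{c})=(\gamma_*[M])^{\times s}=\mathfrak{m}^{\times s}$, and Proposition \ref{nec-cond}(3) then forces ${}^s\chi_*(\mathfrak{m}^{\times s})=0$.

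For the converse — the substantive direction — I would Poincaré-dualize on the closed $sn$-manifold $M^s$: cap product with $[M^s]$ is an isomorphism $H^{sn}(M^s;\mathcal{N})\xrightarrow{\ \cong\ }H_0(M^s;\mathcal{N}\otimes\widetilde{\Z}_{M^s})$, so $\mathfrak{v}_{M,s}^{\,sn}=0$ if and only if $[M^s]\cap\mathfrak{v}_{M,s}^{\,sn}=0$ in $H_0\bigl(M^s;I_s^{sn}(\pi^{s-1})\otimes\widetilde{\Z}_{M^s}\bigr)$. Since $\mathfrak{v}_{M,s}^{\,sn}=(\xi q)^*\mathfrak{b}_{\pi^{s-1}}^{\,sn}$, naturality of the cap product yields
\[(\xi q)_*\bigl([M^s]\cap\mathfrak{v}_{M,s}^{\,sn}\bigr)=(\xi q)_*[M^s]\cap\mathfrak{b}_{\pi^{s-1}}^{\,sn}={}^s\chi_*(\mathfrak{m}^{\times s})\cap\mathfrak{b}_{\pi^{s-1}}^{\,sn},\]
using again $(\gamma^s)_*[M^s]=\mathfrak{m}^{\times s}$. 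If ${}^s\chi_*(\mathfrak{m}^{\times s})=0$ the right-hand side vanishes, and the key point is then that $(\xi q)_*$ is an \emph{isomorphism} on $H_0$ with these coefficients: the $\pi^s$-action on $I_s^{sn}(\pi^{s-1})\otimes\widetilde{\Z}_{M^s}$ factors through the surjection ${}^s\chi$ — for the first factor because $I_s^{sn}(\pi^{s-1})={}^s\chi^*I^{sn}(\pi^{s-1})$, and for the second precisely by the hypothesis on $A$ — so the coinvariants over $\pi^s$ and over $\pi^{s-1}$ coincide, $q_*$ is an isomorphism in degree $0$, and $\xi_*$ is one because $\pi_1(\xi)$ is. Hence $[M^s]\cap\mathfrak{v}_{M,s}^{\,sn}=0$, so $\mathfrak{v}_{M,s}^{\,sn}=0$, and therefore $\TC_s(M)<sn$.

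The obstacle I expect is bookkeeping rather than anything conceptual: one must thread the twisted coefficient systems consistently through the three isomorphisms in play — the top-degree identification $q^*$, the Poincaré duality cap on $M^s$, and the degree-zero pushforward — and verify their compatibilities, in particular that the prescribed isomorphism ${}^s\chi^*(A)\cong\widetilde{\Z}^{\otimes s}$ is compatible with the splitting $\widetilde{\Z}_{M^s}=\widetilde{\Z}_M^{\otimes s}$ so that $\gamma^s_*[M^s]$ is genuinely identified with $\mathfrak{m}^{\times s}$, that cap-product naturality is applied with the correct orientation conventions, and that all the needed isomorphisms really hold in the degrees required (which is where $n\ge2$ enters). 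Once these are pinned down, the argument is exactly the $\TC_s$-analogue of the proof in \cite{CV21}.
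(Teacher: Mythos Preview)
Your proposal is correct and is essentially the same argument as the paper's. Both reduce via Theorem \ref{EFMO} to the vanishing of $\mathfrak{v}_{M,s}^{sn}$, detect this by Poincar\'e duality on $M^s$, push forward along the map to $B\pi^{s-1}$ using cap-product naturality, and observe that the induced map on $H_0$ is an isomorphism because the $\pi^s$-action on the coefficients factors through the surjection ${}^s\chi$ (this last point being exactly where the hypothesis ${}^s\chi^*(A)\cong\widetilde{\Z}^{\otimes s}$ is used). The only cosmetic difference is that you route the pushforward through $C_{\Delta_s}(M)$ via $\xi\circ q$, whereas the paper factors through $B\pi^s$ via $B^s\chi\circ\gamma^s$; since $\xi\circ q\simeq B^s\chi\circ\gamma^s$ this is the same map, and indeed the paper's factorization makes the $H_0$-isomorphism slightly cleaner to justify (you need only that ${}^s\chi$ is surjective, without worrying about $\pi_1(q)$ separately).
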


\noindent Here we denote by $\mathfrak{m}^{\times s}\in H_{sn}(\pi^s;\widetilde{\Z}^{\otimes s})$ the image of the fundamental class of $M^s$ under the homomorphism induced by $\gamma^s:M^s\to B\pi^s$. Note that we also denote by $\widetilde{\Z}$ the local system over $B\pi$ arising from the isomorphism $\pi_1(\gamma)$ induced by the classifying map $\gamma\colon M \to B\pi$.  

\begin{proof}
From the naturality of the cap-product and the assumption that $A$ is a $\Z[\pi^{s-1}]$-module satisfying $^s\chi^*(A)\cong\widetilde{\Z}^{\otimes s}$	we get the following diagram.

\begin{equation*}
\xymatrixcolsep{5pc}
\xymatrix{
	H_{sn}(M^s;\widetilde{\Z}^{\otimes s}) \otimes H^{sn}(M^s;{I}_s^{sn}(\pi^{s-1})) \ar@<-8ex>[d]^{(\gamma^s)_*} \ar[r]^-{\cap}_-{\cong} & 
	{I}_s^{sn}(\pi^{s-1})\otimes_{\pi^s}\widetilde{\Z}^{\otimes s} \ar[d]_{=} \\
	H_{sn}(B\pi^s;\widetilde{\Z}^{\otimes s}) \otimes H^{sn}(B\pi^s;{I}_s^{sn}(\pi^{s-1}) ) \ar@<-8ex>[u]_{(\gamma^s)^*} 
	\ar@<-8ex>[d]^{(^s\chi)_*}
	\ar[r]^-{\cap} & 
	{I}_s^{sn}(\pi^{s-1})\otimes_{\pi^s}\widetilde{\Z}^{\otimes s}
	\ar[d]^{(^s\chi)_*}_{\cong}\\
		H_{sn}(B\pi^{s-1};A) \otimes H^{sn}(B\pi^{s-1};{I}^{sn}(\pi^{s-1}) ) \ar@<-8ex>[u]_{(^s\chi)^*} 
	\ar[r]^-{\cap} & 
	{I}^{sn}(\pi^{s-1})\otimes_{\pi^{s-1}}A\\
}
\end{equation*}
The cap-product on the first line is an isomorphism by Poincar\'e duality.
The bottom vertical map in the third column corresponds to the morphism $$\chi_*: H_*(B\pi^s;{I}_s^{sn}(\pi^{s-1})\otimes\widetilde{\Z}^{\otimes s} )\to H_*(B\pi^{s-1}; {I}^{sn}(\pi^{s-1})\otimes A)$$ in degree $0$. It is induced by the obvious isomophism between the underlying $\Z$-modules $I^{sn}(\pi^{s-1})\otimes \widetilde{\Z}^{\otimes s}$ and $I^{sn}(\pi^{s-1})\otimes A$ and is an isomorphism on the coinvariants because {$^s\chi$} is surjective.

Let $[M]\in H_n(M; \widetilde{\Z})$ be the fundamental class. Since the third column of the diagram is comprised of isomorphisms, we have
\[(^s\chi)_*(\gamma^s)_*([M^s])\cap \mathfrak{b}^{sn}_{\pi^{s-1}}=0  \mbox{ if and only if }[M^s]\cap (\gamma^s)^*(^s\chi)^*\mathfrak{b}^{sn}_{\pi^{s-1}}=0.
\]
This is equivalent to saying
\[(^s\chi)_*(\mathfrak{m}^{\times s})\cap \mathfrak{b}^{sn}_{\pi^{s-1}}=0  \mbox{ if and only if }[M^s]\cap \mathfrak{v}^{sn}_{M,s}=0.
\]
The hypothesis $^s\chi_*(\mathfrak{m}^{\times s}) =0$ yields $[M^s]\cap \mathfrak{v}^{sn}_{M,s}=0$. By Poincar\'e duality, we can then conclude that $\mathfrak{v}^{sn}_{M,s}=0$ and consequently $\TC_s(M)<sn$.
\end{proof}

\begin{corollary} \label{Cor-orientable case} Let $M$ be an orientable $n$-dimensional manifold with $n\geq 2$ and abelian fundamental group $\pi=\pi_1(M)$.  The class $\mathfrak{m}=\gamma_*([M])\in H_n(\pi;\Z)$ satisfies $^s\chi_*(\mathfrak{m}^{\times s}) =0$ in $H_{sn}(\pi^{s-1};\Z)$ if and only if $\TC_s(X)<sn$. 
	
\end{corollary}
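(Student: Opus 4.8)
The plan is to derive Corollary \ref{Cor-orientable case} as a direct specialization of Proposition \ref{suff-cond} to the orientable case. Since $M$ is orientable, its first Stiefel–Whitney class vanishes, so the orientation homomorphism $\omega_M\colon\pi\to\{\pm1\}$ is trivial and the orientation module $\widetilde{\Z}_M$ is just $\Z$ with trivial $\Z[\pi]$-action; consequently $\widetilde{\Z}^{\otimes s}=\Z$ as a trivial $\Z[\pi^s]$-module. To apply Proposition \ref{suff-cond} I must exhibit a $\Z[\pi^{s-1}]$-module $A$ with ${}^s\chi^*(A)\cong\widetilde{\Z}^{\otimes s}=\Z$. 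The obvious choice is $A=\Z$ with the trivial $\Z[\pi^{s-1}]$-action: then ${}^s\chi^*(\Z)$ is $\Z$ with $\pi^s$ acting through ${}^s\chi$ followed by the trivial action, i.e.\ the trivial $\Z[\pi^s]$-module $\Z$, which is exactly $\widetilde{\Z}^{\otimes s}$. So the hypothesis of Proposition \ref{suff-cond} is automatically satisfied in the orientable case.

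With this choice in hand, the conclusion is immediate: Proposition \ref{suff-cond} states that $\TC_s(M)<sn$ if and only if ${}^s\chi_*(\mathfrak m^{\times s})=0$ in $H_{sn}(\pi^{s-1};A)=H_{sn}(\pi^{s-1};\Z)$, where $\mathfrak m=\gamma_*([M])\in H_n(\pi;\widetilde{\Z})=H_n(\pi;\Z)$. This is precisely the statement of the corollary (with $X=M$, so $\TC_s(X)=\TC_s(M)$). The only mild point to record is that $\mathfrak m^{\times s}\in H_{sn}(\pi^s;\widetilde{\Z}^{\otimes s})=H_{sn}(\pi^s;\Z)$ is the image under $(\gamma^s)_*$ of the fundamental class $[M^s]$, and that the homological cross product $H_n(\pi;\Z)^{\otimes s}\to H_{sn}(\pi^s;\Z)$ sends $\mathfrak m^{\otimes s}$ to $\mathfrak m^{\times s}$ — a consequence of the compatibility of the cross product on $M^s$ with the one on $B\pi^s$ under the classifying maps, together with $[M^s]=[M]^{\times s}$. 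None of this requires any new argument beyond naturality of the cross product.

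There is really no serious obstacle here; the corollary is essentially a translation of Proposition \ref{suff-cond} once one observes that orientability forces the local coefficient complications to disappear. The one thing I would be careful to state clearly is the identification of modules: that $\widetilde{\Z}_M=\Z$ trivially when $M$ is orientable, hence $\widetilde{\Z}^{\otimes s}=\Z$ trivially over $\pi^s$, and that the pullback ${}^s\chi^*(\Z)$ of the trivial module is again trivial, so $A=\Z$ works. After that, one simply invokes Proposition \ref{suff-cond}. I would therefore write the proof in two or three sentences: set $A=\Z$ with trivial action, check ${}^s\chi^*(\Z)\cong\widetilde{\Z}^{\otimes s}$, and cite Proposition \ref{suff-cond}.

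\begin{proof}
Since $M$ is orientable, the orientation homomorphism $\omega_M\colon\pi\to\{\pm1\}$ is trivial, so the orientation module $\widetilde{\Z}=\widetilde{\Z}_M$ is $\Z$ with the trivial $\Z[\pi]$-action, and hence $\widetilde{\Z}^{\otimes s}$ is $\Z$ with the trivial $\Z[\pi^s]$-action. Take $A=\Z$ with the trivial $\Z[\pi^{s-1}]$-module structure. Then ${}^s\chi^*(A)$ is $\Z$ with $\pi^s$ acting through ${}^s\chi$ followed by the trivial $\pi^{s-1}$-action, i.e.\ the trivial $\Z[\pi^s]$-module $\Z$; thus the $\Z[\pi^s]$-modules ${}^s\chi^*(A)$ and $\widetilde{\Z}^{\otimes s}$ are isomorphic, and the hypothesis of Proposition \ref{suff-cond} holds. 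The conclusion now follows directly from Proposition \ref{suff-cond}: with $\mathfrak m=\gamma_*([M])\in H_n(\pi;\widetilde{\Z})=H_n(\pi;\Z)$ and $\mathfrak m^{\times s}\in H_{sn}(\pi^s;\widetilde{\Z}^{\otimes s})=H_{sn}(\pi^s;\Z)$, one has $\TC_s(M)<sn$ if and only if ${}^s\chi_*(\mathfrak m^{\times s})=0$ in $H_{sn}(\pi^{s-1};A)=H_{sn}(\pi^{s-1};\Z)$.
\end{proof}
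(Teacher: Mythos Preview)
Your proof is correct and takes essentially the same approach as the paper: set $A=\Z$ with trivial $\Z[\pi^{s-1}]$-action, observe that orientability makes $\widetilde{\Z}^{\otimes s}$ the trivial $\Z[\pi^s]$-module $\Z$ so that ${}^s\chi^*(A)\cong\widetilde{\Z}^{\otimes s}$, and invoke Proposition \ref{suff-cond}. The paper's proof is just a terser version of what you wrote.
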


\begin{proof}
	Since $M$ is orientable, the orientation module $\widetilde{\Z}$ is just $\Z$ with trivial action. Taking $A=\Z$ also with trivial action, we have 
$^s\chi^*(A)\cong{\Z}^{\otimes s}$ and the result follows from Proposition \ref{suff-cond}.
\end{proof}

\begin{remark}\label{elsistemadecorficientes}{ When $M$ is non-orientable, {the $\Z[\pi^{s-1}]$-module} $A=\widetilde{\Z}^{\otimes s-1}$ {satisfies the assumptions of Proposition \ref{suff-cond} for $s=2$ (\cite{CV21}) but fails to do so for all $s>2$}. 
		 For instance, set $s=3$ and suppose that $b\in \pi$ is an element for which the orientation character $\omega:\pi \to \{\pm1\}$ satisfies $\omega(b)=-1$. 
		Then, for $a,c\in \pi$ and $t\in \Z$ we have
		\[(a,b,c)\cdot t=\omega(a)\omega(b)\omega(c)t=-\omega(a)\omega(c)t\] 
		while
		\[^3\chi(a,b,c)\cdot t= (a\overline{b},b\overline{c})\cdot t= \omega(a\overline{b})\omega(b\overline{c})t=\omega(a)\omega(c)t.\] 
This shows that the map $^3\chi$ does not induce a homomorphism from $H_*(\pi^3;\widetilde{\Z}^{\otimes 3})$ to $H_*(\pi^2;\widetilde{\Z}^{\otimes 2})$.
{Note that, in Proposition \ref{suff-cond}, $A$ must be, {as an abelian group,}  isomorphic to $\Z$. Furthermore, since $^s\chi$ is surjective, the} $\Z[\pi^{s-1}]$-module structure {on $A$ is forced by the hypothesis} $^s\chi^*(A)\cong\widetilde{\Z}^{\otimes s}$ and this condition is impossible when $s$ is odd. 
For instance, again set $s=3$, choose $b\in\pi$ as above and assume $^3\chi^*(A)\cong\widetilde{\Z}^{\otimes 3}$. The equalities $^3\chi(b,1,1)={}^3\chi(1,\overline{b},\overline{b})=(b,1)$ then lead to the impossible
$$
t=\omega(\overline{b})\omega(\overline{b})t={}^3\chi(1,\overline{b},\overline{b})\cdot t=(b,1)\cdot t={}^3\chi(b,1,1)\cdot t=\omega(b)t=-t.
$$}
Nonetheless, when $s=2\sigma$, $\sigma\geq1$, the $\Z[\pi^{s-1}]$-module $A=\widetilde{\Z}\otimes(\Z\otimes\widetilde{\Z})^{\sigma-1}$ does satisfy $^s\chi^*(A)\cong\widetilde{\Z}^{\otimes s}$, and we explore its usage in Section \ref{no-orient}.
\end{remark}

\section{Some calculations in the orientable case} \label{sec:orient}

Let $M$ be an orientable connected closed manifold with $\pi=\pi_1(M)$ abelian. In this section we will use Corollary \ref{Cor-orientable case} to establish the non-maximality  $\TC_s(M)<s\dim(M)$ for some families of manifolds with abelian fundamental groups. 

Let $\gamma:M\to B\pi$ be a classifying map and let $\mathfrak{m}=\gamma_*([M])\in H_n(\pi;\Z)$. Since $M$ is orientable, we will suppress the $\Z$-coefficients from the notation. In all cases, we will see that $^s\chi_*(\mathfrak{m}^{\otimes s}) =0$ in $H_{sn}(\pi^{s-1})$.

In our first result, we suppose that $\pi$ is a free abelian group. This case has already been considered in \cite{EFMO} in the more general context of finite CW-complexes. Here, restricting to closed manifolds, we obtain a slightly stronger statement than \cite[Corollary 6.14]{EFMO}.

\begin{proposition}\label{free} Let $M$ be an orientable $n$-dimensional connected closed manifold with $\pi_1(M)=\Z^r$ and let $s\geq 2$. If $sn>(s-1)r$ then $\TC_{s}(M)<sn$.
\end{proposition}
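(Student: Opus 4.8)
The plan is to apply Corollary~\ref{Cor-orientable case}: since $M$ is orientable, it suffices to show that the class $\mathfrak{m}=\gamma_*([M])\in H_n(\Z^r)$ satisfies $^s\chi_*(\mathfrak{m}^{\times s})=0$ in $H_{sn}(\Z^{(s-1)r};\Z)=H_{sn}((\Z^r)^{s-1})$. The key observation is a dimension count: $B\Z^r=T^r$ has $H_k(T^r)=0$ for $k>r$, hence $B(\Z^r)^{s-1}=T^{(s-1)r}$ has $H_{sn}=0$ as soon as $sn>(s-1)r$, which is precisely the hypothesis. Therefore $^s\chi_*(\mathfrak{m}^{\times s})$ lands in the zero group and the vanishing is automatic.

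The steps, in order, are: (1) Identify $B\pi\simeq T^r$ and $B\pi^{s-1}\simeq T^{(s-1)r}$, using that $\pi=\Z^r$ is free abelian. (2) Recall that $\mathfrak{m}^{\times s}\in H_{sn}(\pi^s)$ and that $^s\chi\colon\pi^s\to\pi^{s-1}$ induces $^s\chi_*\colon H_{sn}(\pi^s)\to H_{sn}(\pi^{s-1})$. (3) Observe $H_{sn}(B\pi^{s-1})=H_{sn}(T^{(s-1)r})=0$ because $sn>(s-1)r=\dim T^{(s-1)r}$. (4) Conclude $^s\chi_*(\mathfrak{m}^{\times s})=0$, and invoke Corollary~\ref{Cor-orientable case} to get $\TC_s(M)<sn$.

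There is essentially no obstacle here — the result is a soft consequence of the cohomological characterization together with the low homological dimension of a torus. The only point requiring a little care is bookkeeping with the coefficient systems: one must check that in the orientable case all the local systems appearing ($\widetilde{\Z}$, $A$, and their tensor powers) are trivial, so that Corollary~\ref{Cor-orientable case} applies with ordinary $\Z$ coefficients and the target group $H_{sn}(\pi^{s-1};\Z)$ is genuinely the ordinary homology of $T^{(s-1)r}$, which vanishes above degree $(s-1)r$. This is exactly what Corollary~\ref{Cor-orientable case} and the preceding discussion have already arranged, so the argument is immediate.
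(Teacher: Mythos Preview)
Your proof is correct and follows essentially the same approach as the paper: both arguments invoke Corollary~\ref{Cor-orientable case} and observe that $B\pi^{s-1}\simeq T^{(s-1)r}$ has vanishing homology above degree $(s-1)r$, so the target group $H_{sn}(\pi^{s-1};\Z)$ is zero under the hypothesis $sn>(s-1)r$.
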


\begin{proof} 
	Let $\pi=\Z^r$, let $\gamma:M\to B\pi$ be a classifying map and let $\mathfrak{m}=\gamma_*([M])\in H_n(\pi))$. For degree reasons, we can see that $^s\chi_*(\mathfrak{m}^{\times s}) =0$ in $H_{sn}(\pi^{s-1})$. Indeed $B\pi=(S^1)^r$ and $H_k(\pi)=0$ for $k>r$. Consequently  $H_{sn}(\pi^{s-1})=0$ if $sn>(s-1)r$.
\end{proof}

In general, observe that the homomorphism  $^s\chi: \pi^s \to \pi^{s-1}$ given by
\[^s\chi(a_1,\dots,a_s)=(a_1\overline{a_2}, a_2\overline{a_3},\dots , a_{s-1}\overline{a_s})\]
can be decomposed as 
\begin{equation}\label{decomposition-chis}
^s\chi=(\underbrace{\chi\times \cdots\times \chi}_{s-1})\circ({\rm Id}\times \underbrace{\Delta\times \cdots\times \Delta}_{s-2}\times {\rm Id})
\end{equation}
where $\Delta=\Delta_2^{\pi}:\pi \to \pi\times \pi$ is the diagonal map and $\chi={^2\chi}$. Denote by $j:\pi\to \pi$ the inversion.
Since $\chi$ can be seen as the multiplication of $\pi$, {$\mu:\pi \times \pi \to \pi$}, precomposed with ${\rm Id}\times j$, we have, for classes $ \mathfrak{a}, \mathfrak{b}\in H_{*}(\pi)$,  
\[\chi_*(\mathfrak{a}\times \mathfrak{b})=\mathfrak{a} \wedge j_*(\mathfrak{b})\]
where $\wedge$ is the Pontryagin product, {that is, the product induced by $\mu$ in homology}, see \cite[V.5]{Brown}. 	

In the results below, we consider the cyclic group $\Z_q=\langle v ~|~v^q=1\rangle$ and work at the chain level. Recall the classical resolution of $\Z$ as a trivial $\Z[\Z_q]$-module given by
\begin{equation}\label{resolutionZq}
\xymatrix{
	\cdots \ar[r] &\Z [\Z_q]\ar[r]^{N_q(v)} & \Z [\Z_q] \ar[r]^{v-1} & \Z [\Z_q] \ar[r]^{N_q(v)} & \Z [\Z_q] \ar[r]^{v-1}& \Z [\Z_q] \ar[r]^{\varepsilon}&\Z
}
\end{equation}
where $N_q(v)=1+v+\cdots+v^{q-1}$. 

In the following lemma, we recall the morphisms induced by the diagonal $\Delta$, the multiplication $\mu$ and the inversion $j$ on the level of resolutions (see \cite[page 108]{Brown} and  \cite[\S3.2]{CV21}). Let $[k]$ denote the generator of degree $k$ in \eqref{resolutionZq}, and write $B_{i,j}$ for the binomial coefficient $\binom{\,i+j\,}{i}$.

\begin{lemma}\label{deltaymuenresoluciones}
At the level of the resolution \eqref{resolutionZq}, 
\begin{enumerate}
    \item[$(a)$] 
$\Delta$ is given on generators by $[p] \mapsto \sum_{k+l=p}\Delta_{kl}[p]$ where
\[
\Delta_{kl}[p]=\left\{\begin{array}{ll}
[k]\otimes [l] & k \mbox{ even;}\\
\mathopen[ k\mathclose]\otimes v[l] & k \mbox{ odd, } l \mbox{ even;}\\
\sum_{0\leq i<j\leq q-1}v^i[k]\otimes v^j [l] & k \mbox{ odd, } l \mbox{ odd.}
\end{array}\right.
\]

 \item[$(b)$] $\mu$ is given on generators by the formul\ae
\begin{align}
[2i ]\otimes [2j]&\mapsto B_{i,j}\,[2(i+j)]; \nonumber \\
[2i]\otimes [2j+1]&\mapsto B_{i,j}\,[2(i+j)+1]; \nonumber \\
[2i+1]\otimes [2j]&\mapsto B_{i,j}\,[2(i+j)+1]; \nonumber \\
[2i+1]\otimes [2j+1]&\mapsto 0. \nonumber
\end{align}
 \item[$(c)$] $j$ is given on generators by 
$$
[i]\to N_{q-1}^k(v) [i] \qquad \mbox{if } i\in \{2k,2k-1\}.
$$
\end{enumerate}
\end{lemma}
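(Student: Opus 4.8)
The plan is to treat all three items uniformly as computations of lifts of group homomorphisms to the standard free resolutions. Recall the elementary fact that if $\phi\colon G\to G'$ is a homomorphism of groups and $P_*\twoheadrightarrow\Z$, $P'_*\twoheadrightarrow\Z$ are free resolutions of the trivial modules over $\Z[G]$ and $\Z[G']$, then there exists a $\phi$-semilinear chain map $P_*\to P'_*$ covering $\mathrm{id}_\Z$, unique up to $\phi$-semilinear chain homotopy; any such map realizes the maps induced by $\phi$ on (co)homology with arbitrary local coefficients. Hence, in each case, it suffices to write the proposed formula on the generators $[k]$, extend it semilinearly over the relevant ring homomorphism, and verify degree by degree that it commutes with differentials. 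For the group $\Z_q\times\Z_q$ we use the tensor-product resolution $P_*\otimes P_*$ of \eqref{resolutionZq} with itself, equipped with the sign differential $d(x\otimes y)=dx\otimes y+(-1)^{|x|}x\otimes dy$; Künneth shows this is a free resolution of $\Z$ over $\Z[\Z_q\times\Z_q]=\Z[\Z_q]\otimes\Z[\Z_q]$. Throughout, the only relations used are $v^q=1$, $(v-1)N_q(v)=0$, and commutativity of $\Z[\Z_q]$.

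For $(a)$, the diagonal $\Delta$ induces the ring homomorphism $\Z[\Z_q]\to\Z[\Z_q]\otimes\Z[\Z_q]$, $v\mapsto v\otimes v$, and one takes $F\colon P_*\to(P\otimes P)_*$, $F[p]=\sum_{k+l=p}\Delta_{kl}[p]$, extended $\Delta$-semilinearly. The identity $dF=Fd$ is checked according to the parities of $k$ and $l$. When at least one of them is even, the identity collapses at once to the defining relations of the resolution; the substantive case is $\Delta_{kl}$ with both $k$ and $l$ odd, where, after applying the tensor differential, one must telescope the double sums $\sum_{0\le i<j\le q-1}(v^{i+1}-v^i)\otimes(\cdot)$ and $\sum_{0\le i<j\le q-1}(\cdot)\otimes(v^{j+1}-v^j)$ --- closing up the telescope with $v^q=1$ --- and match the outcome against $F$ of the previous generator. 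I expect this odd--odd bookkeeping to be the one genuinely non-formal point in the lemma.

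For $(b)$, the multiplication $\mu$ induces the ring homomorphism $\Z[\Z_q]\otimes\Z[\Z_q]\to\Z[\Z_q]$ sending $v\otimes1$ and $1\otimes v$ to $v$, and one takes $G\colon(P\otimes P)_*\to P_*$ given on generators by the stated binomial formulae, with $[2i+1]\otimes[2j+1]\mapsto 0$, extended $\mu$-semilinearly. The chain-map identity then reduces to the Pascal recursion $B_{i,j}=B_{i-1,j}+B_{i,j-1}$ combined with the $N_q$--$(v-1)$ relations; indeed the vanishing on odd--odd generators is precisely what makes the two incoming boundary terms cancel, and the remaining cases are Pascal's identity multiplied by $N_q(v)$ or by $(v-1)$. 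For $(c)$, $j$ induces the ring automorphism $\Z[\Z_q]\to\Z[\Z_q]$, $v\mapsto v^{-1}=v^{q-1}$; writing $N_{q-1}(v)=1+v+\cdots+v^{q-2}$ one has $v^{-1}-1=(v-1)N_{q-1}(v)$ and $N_q(v^{-1})=N_q(v)$. Setting $J[i]=N_{q-1}^k(v)[i]$ for $i\in\{2k,2k-1\}$ (so $J[0]=[0]$) and extending $j$-semilinearly, these two identities give at once $dJ[2k]=N_{q-1}^k(v)N_q(v)[2k-1]=Jd[2k]$ and $dJ[2k-1]=(v-1)N_{q-1}^k(v)[2k-2]=(v^{-1}-1)N_{q-1}^{k-1}(v)[2k-2]=Jd[2k-1]$, so $J$ is the required lift. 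Both $(b)$ and $(c)$ are routine.

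Since semilinear lifts are unique up to chain homotopy, the chain maps $F$, $G$, $J$ so constructed compute the maps induced by $\Delta$, $\mu$, $j$ on homology and cohomology with arbitrary coefficients, which is exactly what is used in the following sections. (Items $(a)$ and $(b)$ are classical; see \cite[p.~108]{Brown}, and $(c)$ appears in \cite[\S3.2]{CV21}.)
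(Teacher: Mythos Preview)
Your proposal is correct and follows the standard route: exhibit the stated formulae as $\phi$-semilinear chain maps over the appropriate ring homomorphisms and invoke uniqueness up to chain homotopy. The paper does not supply its own proof of this lemma but defers to \cite[p.~108]{Brown} and \cite[\S3.2]{CV21}, where exactly this verification is carried out; your sketch is in line with those references, and your identification of the odd--odd case in $(a)$ as the only nontrivial check, Pascal's identity as the driver for $(b)$, and the identities $(v-1)N_{q-1}(v)=v^{-1}-1$ and $N_q(v^{-1})=N_q(v)$ for $(c)$ is accurate.
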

We denote by $C_{\bullet}(\Z_q)$ the $\Z$-chain complex obtained by tensoring the resolution (\ref{resolutionZq}) with $\Z$ over $\Z_q$.
\begin{equation}\label{zcc}
\xymatrix{
	C_{\bullet}(\Z_q): &\cdots\ar[r]^-{0}&\Z[2k] \ar[r]^-{q}& \Z[2k-1] \ar[r]^-{0} &\cdots \ar[r]^{q}&\Z[1]  \ar[r]^{0}& \Z[0]  
}\end{equation}
Recall that the homology of this chain complex gives $H_*(\Z_q)=H_*(\Z_q;\Z)$. In positive degrees, $H_+(\Z_q)$ is concentrated in odd degrees.

As in \cite{CV21}, we denote by $\wedge: C_{\bullet}(\Z_q)\otimes C_{\bullet}(\Z_q)\to C_{\bullet}(\Z_q)$ the Pontryagin product, which is given by the formul\ae\ $(b)$ of Lemma \ref{deltaymuenresoluciones}:
\begin{equation}\label{productformula}
\left\{\begin{array}{l}
\arraycolsep=1.4pt\def\arraystretch{1.5}
~[2i]\wedge [2k]=B_{i,k}[2i+2k], \quad \quad [2i+1]\wedge [2k+1]=0, \\
\\
~[2i]\wedge [2k+1]=[2k+1]\wedge [2i]=B_{i,k}[2i+2k+1].
\end{array}\right.
\end{equation}
We denote by $\mathbf{j}:C_{\bullet}(\Z_q)\to C_{\bullet}(\Z_q)$ the morphism induced by the inversion, which is from Lemma \ref{deltaymuenresoluciones} $(c)$ given by
\begin{equation}\label{formulaj}
\mathbf{j}([i])= (q-1)^k [i] \qquad \mbox{if } i\in \{2k,2k-1\}.
\end{equation}
In these terms, the chain map $\chi_{\bullet}={^2\chi}_{\bullet}$ induced by $\chi={^2\chi}$ can be described as the composite
\[\xymatrix{
	C_{\bullet}(\Z_q)\otimes C_{\bullet}(\Z_q) \ar[r]^-{\mathrm{Id} \otimes \mathbf{j} }&
	C_{\bullet}(\Z_q)\otimes C_{\bullet}(\Z_q) \ar[r]^-{\wedge } &
	C_{\bullet}(\Z_q).
}\]
We will also use the diagonal approximation of $C_{\bullet}(\Z_q)$, obtained from Lemma \ref{deltaymuenresoluciones} $(a)$:
\begin{equation}\label{diagonalZq}
\begin{array}{rcl}
\Delta_{\bullet}: C_{\bullet}(\Z_q)&\to & C_{\bullet}(\Z_q)\otimes C_{\bullet}(\Z_q)\\[2pt]
{[p]} &\mapsto & \sum\limits_{k+l=p} \alpha_{kl}[k]\otimes [l].
\end{array} 
\end{equation}
Here $\alpha_{kl}=1$ if $kl$ is even and $\alpha_{kl}=(q-1)q/2$ if $kl$ is odd.\\

\begin{proposition}\label{cyclic} Let $M$ be an orientable $n$-dimensional connected closed manifold with $\pi_1(M)=\Z_q$. Then, for any $s\geq 2$, we have $\TC_{s}(M)<sn$.

\end{proposition}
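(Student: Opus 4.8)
The strategy is to apply Corollary~\ref{Cor-orientable case}: since $M$ is orientable with $\pi=\Z_q$, it suffices to show that $\mathfrak m=\gamma_*([M])\in H_n(\Z_q)$ satisfies ${}^s\chi_*(\mathfrak m^{\times s})=0$ in $H_{sn}(\Z_q^{s-1})$. If $n$ is even this is immediate, since $H_+(\Z_q)$ is concentrated in odd degrees, so $H_n(\Z_q)=0$ and $\mathfrak m=0$. Hence the entire content is the case $n$ odd, which I would treat by a direct computation at the chain level of the complex $C_\bullet(\Z_q)$ of~\eqref{zcc} using Lemma~\ref{deltaymuenresoluciones}.

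For $n$ odd we have $H_n(\Z_q)=\Z/q$, generated by the cycle $[n]\in C_n(\Z_q)$, so $\mathfrak m$ is represented at the chain level by $\lambda[n]$ for some $\lambda\in\Z$, and hence $\mathfrak m^{\times s}$ by $\lambda^s\,[n]^{\otimes s}\in C_\bullet(\Z_q)^{\otimes s}$. I would then realize ${}^s\chi$ on these complexes by composing, according to the decomposition~\eqref{decomposition-chis}, the chain-level diagonal approximation, Pontryagin product, and inversion maps, obtaining the chain map
\[
{}^s\chi_\bullet=\bigl(\underbrace{\chi_\bullet\otimes\cdots\otimes\chi_\bullet}_{s-1}\bigr)\circ\bigl(\mathrm{Id}\otimes\underbrace{\Delta_\bullet\otimes\cdots\otimes\Delta_\bullet}_{s-2}\otimes\mathrm{Id}\bigr),\qquad \chi_\bullet=\wedge\circ(\mathrm{Id}\otimes\mathbf j),
\]
where $\Delta_\bullet$ is the diagonal approximation~\eqref{diagonalZq}, $\wedge$ the Pontryagin product~\eqref{productformula}, and $\mathbf j$ the inversion~\eqref{formulaj}. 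Since this chain map computes ${}^s\chi_*$ on homology, it suffices to prove ${}^s\chi_\bullet([n]^{\otimes s})=0$.

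Evaluating, the map $\mathrm{Id}\otimes\Delta_\bullet^{\otimes(s-2)}\otimes\mathrm{Id}$ sends $[n]^{\otimes s}$ to a sum of terms of the form $[n]\otimes[k_2]\otimes[l_2]\otimes\cdots\otimes[k_{s-1}]\otimes[l_{s-1}]\otimes[n]$ with $k_i+l_i=n$ (the scalar coefficients $\alpha_{k_il_i}$ will be irrelevant), and $\chi_\bullet^{\otimes(s-1)}$ then re-pairs consecutive tensor factors, so such a term maps to the tensor product of $\chi_\bullet([n]\otimes[k_2])$, the classes $\chi_\bullet([l_i]\otimes[k_{i+1}])$ for $2\leq i\leq s-2$, and $\chi_\bullet([l_{s-1}]\otimes[n])$. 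Now $\mathbf j$ preserves degree and, by~\eqref{productformula}, $[a]\wedge[b]=0$ whenever $a$ and $b$ are both odd; so a non-vanishing term forces each of these $s-1$ consecutive pairs to contain an even-degree factor. Since $n$ is odd, the first pair forces $k_2$ even, hence $l_2$ odd; the second pair then forces $k_3$ even, hence $l_3$ odd; and inductively $l_i$ is odd for all $i$, so in particular the last pair $([l_{s-1}],[n])$ consists of two odd-degree generators and its $\wedge$-product vanishes. Thus every term of ${}^s\chi_\bullet([n]^{\otimes s})$ is zero, so ${}^s\chi_*(\mathfrak m^{\times s})=0$, and Corollary~\ref{Cor-orientable case} yields $\TC_s(M)<sn$. (For $s=2$ there are no middle diagonals and the single pair is $([n],[n])$, so the same argument applies.)

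The step I expect to require the most care is the bookkeeping behind the previous paragraph: pinning down exactly how the $2(s-1)$ tensor factors produced by the middle diagonals are re-paired by $\chi_\bullet^{\otimes(s-1)}$, and verifying that the composite of the standard chain-level diagonal, product, and inversion maps is genuinely a chain map covering ${}^s\chi$, so that the chain computation legitimately computes ${}^s\chi_*$. Once the pairing pattern is identified, the parity-propagation argument that annihilates every summand is short, and no computation with the binomial coefficients in~\eqref{productformula} or with the scalars $\alpha_{kl}$ is needed.
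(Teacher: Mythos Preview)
Your proposal is correct and follows essentially the same approach as the paper's proof: both reduce to Corollary~\ref{Cor-orientable case}, dispose of the even-dimensional case trivially, and for $n$ odd compute ${}^s\chi_\bullet([n]^{\otimes s})$ via the decomposition~\eqref{decomposition-chis}, observing that the Pontryagin product of two odd-degree generators vanishes. The only cosmetic difference is that the paper phrases the key step as ``there must exist some $i$ with $l_i$ and $k_{i+1}$ both odd,'' whereas you unwind this existence claim as an explicit forward parity propagation; these are the same argument.
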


\begin{proof}
Let $\gamma:M\to B\pi$ be a classifying map, where $\pi=\Z_q$, and let $\mathfrak{m}=\gamma_*([M])\in H_n(\pi;\Z))$. We will see that $^s\chi_*(\mathfrak{m}^{\times s}) =0$ in $H_{sn}(\pi^{s-1};\Z)$. If $\dim M$ is even, this is immediate since $H_+(\pi)$ is concentrated in odd degrees, which implies $\mathfrak{m}=0$. We then suppose that $\dim M=2p+1$. A cycle $\mathbf{m}\in C_{2p+1}(\Z_q)$ representing the class $\mathfrak{m}$ is of the form $\mathbf{m}=\lambda [2p+1]$ for some $\lambda \in \Z$. In order to compute $^s\chi_*(\mathfrak{m}^{\times s})$ we use the decomposition (\ref{decomposition-chis}) and analyze the element $^s\chi_{\bullet}(\mathbf{m}^{\otimes s})$ which is given by
\[
(\chi_{\bullet})^{\otimes s-1}(\mathbf{m}\otimes \underbrace{\Delta_{\bullet}\mathbf{m}\otimes \cdots\otimes \Delta_{\bullet}\mathbf{m}}_{s-2}\otimes \mathbf{m}).
\]
The element $\mathbf{m}\otimes \Delta_{\bullet}\mathbf{m}\otimes \cdots\otimes \Delta_{\bullet}\mathbf{m}\otimes \mathbf{m}$ is given by a $\Z$-linear combination of elements of the form
\[
[2p+1]\otimes [k_1]\otimes [l_1]\otimes \cdots \otimes [k_{s-2}]\otimes [l_{s-2}]\otimes [2p+1]
\]
where $k_i+l_i=2p+1$ for any $1\leq i\leq s-2$. Setting $l_0=k_{s-1}=2p+1$, there will be necessarily some $i\in\{0,\dots,s-2\}$ such that $l_i$ and $k_{i+1}$ are both odd. Applying $(\chi_{\bullet})^{\otimes s-1}$ to the element above yields
\[
([2p+1]\wedge \mathbf{j}[k_1])\otimes ([l_1]\wedge \mathbf{j}[k_2])\otimes \cdots \otimes ([l_{s-2}]\wedge \mathbf{j}[k_{s-1}]).
\]
If $l_i$ and $k_{i+1}$ are both odd, the corresponding factor $([l_i]\wedge \mathbf{j}[k_{i+1}])$ vanishes since $\mathbf{j}[k_{i+1}])$ is a multiple of $[k_{i+1}]$ and the Pontryagin product of two odd degree elements is zero. Consequently, we obtain $^s\chi_{\bullet}(\mathbf{m}^{\otimes s})=0$ and $^s\chi_*(\mathfrak{m}^{\times s})=0$.
\end{proof}

\begin{proposition}\label{freexcyclic} Let $M$ be an orientable $n$-dimensional connected closed manifold with $\pi_1(M)=\Z^r\times \Z_q$ such that $r<n$. Then, for any $s\geq 2$, we have $\TC_{s}(M)<sn$.
\end{proposition}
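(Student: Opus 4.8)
The plan is to verify the criterion of Corollary \ref{Cor-orientable case}: since $M$ is orientable, it suffices to show that $\mathfrak{m}=\gamma_*([M])\in H_n(\pi)$ satisfies ${}^s\chi_*(\mathfrak{m}^{\times s})=0$ in $H_{sn}(\pi^{s-1})$, where $\pi=\Z^r\times\Z_q$. The idea is to combine the two preceding propositions by exploiting the Künneth decomposition of $B\pi=(S^1)^r\times B\Z_q$. Writing $\pi=G\times K$ with $G=\Z^r$ and $K=\Z_q$, the homology $H_*(\pi)$ splits as $H_*(G)\otimes H_*(K)$ (no Tor, as $H_*(G)$ is free), and ${}^s\chi$ respects the product decomposition: ${}^s\chi={}^s\chi_G\times{}^s\chi_K$ under the identification $\pi^s=G^s\times K^s$ and $\pi^{s-1}=G^{s-1}\times K^{s-1}$. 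Hence $\mathfrak{m}^{\times s}$ lies in $H_{sn}(G^s)\otimes H_{sn}(K^s)$ — more precisely, if we write $\mathfrak{m}=\sum_j a_j\otimes b_j$ with $a_j\in H_{*}(G)$, $b_j\in H_{*}(K)$, then $\mathfrak{m}^{\times s}=\sum_{j_1,\dots,j_s}(a_{j_1}\times\cdots\times a_{j_s})\otimes(b_{j_1}\times\cdots\times b_{j_s})$, and ${}^s\chi_*$ acts factor-wise.

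The key step is then a degree/parity count on each summand. Fix a summand $(a_{j_1}\times\cdots\times a_{j_s})\otimes(b_{j_1}\times\cdots\times b_{j_s})$ contributing to degree $sn$; say $a_{j_t}$ has degree $d_t$ and $b_{j_t}$ has degree $e_t$, so $d_t+e_t$ is the degree of the corresponding Künneth component of $\mathfrak{m}$, and $\sum_t(d_t+e_t)=sn$ forces each $d_t+e_t=n$ (since $\mathfrak m\in H_n$). First I would dispose of the $G$-factor: ${}^s\chi_G\colon G^s\to G^{s-1}$ lands in $H_*(G^{s-1})$, which vanishes above degree $(s-1)r$. If $\sum_t d_t>(s-1)r$ the whole summand dies after applying ${}^s\chi_*$. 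Otherwise $\sum_t d_t\le (s-1)r$, hence $\sum_t e_t = sn-\sum_t d_t \ge sn-(s-1)r = s(n-r)+r$. Since $r<n$, we have $n-r\ge 1$, so $\sum_t e_t \ge s+r \ge s$, which means at least... here the cleaner route is the chain-level argument of Proposition \ref{cyclic}: the component of $\mathfrak m^{\times s}$ in the $K$-factor is a linear combination of products $[e_1]\times\cdots\times[e_s]$ of classes in the minimal resolution \eqref{resolutionZq} with $\sum e_t$ large, and after applying ${}^s\chi_{K,\bullet}$ — expanded via the decomposition \eqref{decomposition-chis}, the diagonal approximation \eqref{diagonalZq}, and the Pontryagin product together with $\mathbf j$ — one gets a sum of terms each containing a factor $([l_i]\wedge\mathbf j[k_{i+1}])$ with $l_i,k_{i+1}$ both odd, which vanishes exactly as in the proof of Proposition \ref{cyclic}. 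The point is that whenever some $e_t>0$, i.e. the $K$-part is nontrivial in positive degree, the odd-degree concentration of $H_+(\Z_q)$ forces the same cancellation; and whenever all $e_t=0$, one is purely in the $G$-factor in total degree $sn>(s-1)r$ (using $r<n\le sn/s<sn$, and more to the point $sn>(s-1)r$ because $n>r$ implies $sn=sr+s(n-r)>(s-1)r+r+s(n-r)>(s-1)r$), so $H_{sn}(G^{s})$-part already pushes to zero in $H_{sn}(G^{s-1})$ for degree reasons.

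I would therefore structure the proof as: (1) reduce to ${}^s\chi_*(\mathfrak m^{\times s})=0$ via Corollary \ref{Cor-orientable case}; (2) apply Künneth to split everything as a $G$-part tensor a $K$-part, noting ${}^s\chi$ splits correspondingly; (3) for each Künneth summand, either the total $G$-degree exceeds $(s-1)r$ (kill it by Proposition \ref{free}'s dimension argument), or the $K$-part is nontrivial and we kill it by the chain-level Pontryagin-product/inversion cancellation of Proposition \ref{cyclic}, the residual case of trivial $K$-part landing in $H_{sn}(G^s)$ with $sn>(s-1)r$ (which holds since $r<n$). The main obstacle is bookkeeping step (3): one must check that the factor-wise action of ${}^s\chi_\bullet={}^s\chi_{G,\bullet}\times{}^s\chi_{K,\bullet}$ on a Künneth chain $\bigotimes_t([d_t]^G\otimes[e_t]^K)$ genuinely decouples so that the cyclic-factor cancellation is not spoiled by the free-factor contributions — this is a formal consequence of the Eilenberg–Zilber/Künneth naturality, but it needs to be stated carefully. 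Once decoupled, the odd/even parity argument of Proposition \ref{cyclic} applies verbatim to the $\Z_q$-factor, giving ${}^s\chi_\bullet(\mathbf m^{\otimes s})=0$ at the chain level, hence ${}^s\chi_*(\mathfrak m^{\times s})=0$ and $\TC_s(M)<sn$.
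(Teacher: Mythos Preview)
Your approach is essentially the paper's: reduce via Corollary~\ref{Cor-orientable case}, K\"unneth-decompose $\mathfrak m$ along $\pi=\Z^r\times\Z_q$, and then run the chain-level parity cancellation of Proposition~\ref{cyclic} on the $\Z_q$-factor. The decoupling of ${}^s\chi_\bullet$ into a $G$-part and a $K$-part is exactly what the paper uses (``the calculation of $\chi_\bullet$ \dots\ is made componentwise''), so your concern there is well-placed but easily resolved.

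Where your write-up goes astray is the case split. The claim ``whenever \emph{some} $e_t>0$ \dots\ forces the same cancellation'' is not correct as stated: the parity argument of Proposition~\ref{cyclic} needs \emph{every} $e_t$ to be odd. If, say, $e_1=0$ (so $l_0=0$) while $e_2,\dots,e_s$ are odd, then for $s=3$ one can take $k_1$ odd, $l_1$ even, $k_2$ odd, and neither pair $(l_0,k_1)$ nor $(l_1,k_2)$ consists of two odd entries---the cancellation fails. What actually saves the argument, and what the paper observes immediately, is that $\deg a_j\le r<n$ forces $\deg b_j=n-\deg a_j>0$ for \emph{every} K\"unneth component $a_j\otimes b_j$ of $\mathfrak m$; hence every $e_t$ is positive and therefore odd. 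With that single observation in hand, the argument of Proposition~\ref{cyclic} runs verbatim and your Cases~1 and~3 (and the estimate $\sum e_t\ge s+r$) are never needed.
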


\begin{proof} Let $\gamma:M\to B\pi$ be a classifying map, where $\pi=\Z^r\times\Z_q$, and let $\mathfrak{m}=\gamma_*([M])\in H_n(\pi;\Z))$. By the K\"unneth formula, we have $H_*(\Z^r\times \Z_q)=H_*(\Z^r)\otimes H_*(\Z_q)$. Since $n>r$, we can write $\mathfrak{m}=\sum \sigma_i\otimes \alpha_i$ where  $\alpha_i\in H_{+}(\Z_q)$ and $\sigma_i\in H_*(\Z^ r)=\bigwedge(x_1,\dots,x_r)$ with each $x_j$ of degree $1$. Since $H_{+}(\Z_q)$ is 
	concentrated in odd degrees, a cycle $\mathbf{m}$ representing $\mathfrak{m}$ can be described as a sum of terms of the form $\lambda\sigma\otimes [2p+1]$ where $\lambda \in \Z$, $p\geq 0$ and $\sigma \in \bigwedge(x_1,\cdots,x_r)$ is a class regarded as a cycle.
The element $\mathbf{m}\otimes \Delta_{\bullet}\mathbf{m}\otimes \cdots\otimes \Delta_{\bullet}\mathbf{m}\otimes \mathbf{m}$ is therefore given by a $\Z$-linear combination of elements of the form
\begin{equation}\label{term_freexcyclic}
\sigma_0\otimes [l_0]\otimes \sigma_1\otimes [k_1]\otimes \tilde \sigma_1\otimes [l_1]\otimes \cdots \otimes \sigma_{s-2}\otimes\otimes [k_{s-2}]\otimes \tilde \sigma_{s-2}\otimes[l_{s-2}]\otimes \sigma_{s-1} [k_{s-1}]
\end{equation}
where $l_0$, $k_i+l_i$ for $1\leq i\leq s-2$, and $k_{s-1}$ are all odd and the elements $\sigma_i, \tilde\sigma_i$ belong to $\bigwedge(x_1,\cdots,x_r)$. The calculation of $\chi_{\bullet}$ on (say) $\sigma_0\otimes [l_0]\otimes \sigma_1\otimes [k_1]$ is made componentwise and gives rise to factors of the form
\[\pm(\sigma_0 \wedge \sigma_1)\otimes ([l_0]\wedge {\mathbf j}[k_1]).\]
	
As in the proof of Proposition \ref{cyclic}, there will be necessarily,  in the expression (\ref{term_freexcyclic}), some $i\in\{0,\dots,s-2\}$ such that $l_i$ and $k_{i+1}$ are both odd.  After applying $\chi_{\bullet}$,  the corresponding factor will be $0$. Consequently, we obtain $^s\chi_{\bullet}(\mathbf{m}^{\otimes s})=0$ and $^s\chi_*(\mathfrak{m}^{\times s})=0$. We can hence conclude that $\TC_s(M)<sn$.
\end{proof}

\subsection*{Limiting examples}
Examples 4.1  and 4.2 from \cite{CV21} show that the conditions in Propositions \ref{free} and \ref{freexcyclic} are sharp. We now show that Proposition \ref{cyclic} cannot be extended to manifolds whose fundamental group is of the form $\Z_p\times \Z_p$ where $p$ is a prime.

\begin{example} {\textit{ A manifold $N$ with $\pi_1(N)=\Z_3\times \Z_3$ and $\TC_3(N)=3\dim(N)$.}}{\rm
	
Set $\pi=\Z_3\times \Z_3$ and consider $C_{\bullet}(\pi)=C_{\bullet}(\Z_3)\otimes C_{\bullet}(\Z_3)$. We will write $[ik]$ instead of $[i]\otimes [k]$. We first consider the cycle $\mathbf{m}=[05]+[50]$ and denote by $\mathfrak{m}$ its homology class. We will see that $^3\chi_*(\mathfrak{m}^{\times 3})\neq 0$. By the Universal Coefficient Theorem, it is actually sufficient to see that $^3\chi_*(\mathfrak{m}^{\times 3}_{\Z_3})\neq 0$ where $\mathfrak{m}_{\Z_3}$ corresponds to $\mathfrak{m}$ in $H_5(\pi;\Z_3)$. As $H_*(\pi;\Z_3)\cong H_*(\Z_3;\Z_3)\otimes H_*(\Z_3;\Z_3)$ and $H_*(\Z_3;\Z_3)=\Z_3[k]$ for all $k\geq 0$, we will continue to write $\mathfrak{m}_{\Z_3}=[05]+[50]$. 

Using the diagonal approximation associated to the resolution (\ref{resolutionZq}) described in Lemma \ref{deltaymuenresoluciones} (or, tensoring the diagonal (\ref{diagonalZq}) by $\Z_q$) we can check that the homology diagonal of $H_*(\Z_3;\Z_3)$ satisfies
\[\Delta_*[0]=[0]\otimes [0] \qquad \Delta_*[5]=\sum\limits_{k+l=5}[k]\otimes [l]. \]
Consequently, the homology diagonal of $H_*(\pi;\Z_3)$ satisfies:
\[\Delta_*[05]=\sum\limits_{k+l=5}[0k]\otimes [0l]  \qquad \Delta_*[50]=\sum\limits_{k+l=5}[k0]\otimes [l0].\]
We have to compute: 
\[(\chi_*\otimes \chi_*)\left(([05]+[50])\otimes (\Delta_*[05]+\Delta_*[50])\otimes ([05]+[50])\right).\]
A term of the form $\chi_*([kl]\otimes [k'l'])$ is given in $H_*(\pi;\Z_3)=H_*(\Z_3;\Z_3)\otimes H_*(\Z_3;\Z_3)$ by a componentwise calculation:
\[\chi_*([kl]\otimes [k'l'])=(-1)^{lk'}([k]\wedge j_*[k'])\otimes ([l]\wedge j_*[l']).\]
Taking into account the formulas for the inversion and for the Pontryagin product (induced in $\Z_3$-homology by the formulas (\ref{formulaj}) and (\ref{productformula}) given above) we have
\[\chi_*([04]\otimes [05]])=([0]\wedge j_*[0])\otimes ([4]\wedge j_*[5])=[0] \otimes ([4]\wedge (-[5]))=-[0] \otimes (6[9])=-6[09]\]  
which vanishes since we are working with coefficients in $\Z_3$. We can thus check that 
\[(\chi_*\otimes \chi_*)\left([50]\otimes ([01]\otimes [04])\otimes ([05]+[50])\right)=[51]\otimes [54]\]
and that this is the only term belonging to $\Z_3[51]\otimes H_*(\pi;\Z_3)$ in the expansion of $^3\chi_*(\mathfrak{m}^{\times 3}_{\Z_3})$. Since $[51]\otimes [54]$ does not vanish in $\Z_3[51]\otimes H_*(\pi;\Z_3)$, we can conclude that 
$^3\chi_*(\mathfrak{m}^{\times 3}_{\Z_3})\neq 0$. Consequently $^3\chi_*(\mathfrak{m}^{\times 3})\neq 0$.

We can next follow the same strategy as in \cite{CV21} to show that there exists a manifold $N$ with fundamental group $\pi=\Z_3\times \Z_3$ and maximal $\TC_3$. More precisely, considering the lens spaces $L_3^5=S^5/\Z_3$ and $L_3^{\infty}=S^{\infty}/\Z_3=B\Z_3$, we can realize the class $[05]+[50]\in H_*(\pi)$ as the image of the fundamental class of $M=L_3^5\#L_3^5$ under the map induced by 
\[
f \colon M\xrightarrow{\ \text{pinch}\ } L_3^5\vee L_3^5 {\lhook\joinrel\longrightarrow} L_3^{\infty}\vee L_3^{\infty}=B(\pi).
\]
We can then use surgery to replace $M$ by a manifold $N$ with $\pi_1(N)=\pi$ and $f$ by a classifying map $\gamma: N\to B\pi$. In this way, $\mathfrak{m}=\gamma_*([N])$ and, from $^3\chi_*(\mathfrak{m}^{\times 3})\neq 0$ and Proposition \ref{nec-cond}\,(3), we can deduce that $\TC_3(N)=3n$.
}
\end{example}

In \cite{CV21}, it has been shown that the regular topological complexity $\TC=\TC_2$ of a non-orientable manifold with abelian fundamental group is never maximal. This is not longer true for $\TC_s$ with $s\geq 3$. For instance, for the real projective plane $P^2$, $s$-zero-divisor cuplength considerations imply that $\TC_3( P^2)=6$, see \cite{Davis} and the discussion in \S\ref{sec:genmfld} below. With the approach of this paper, we pursue more general maximality results of this nature next.

\section{Cohomological lower bounds}\label{sec:genmfld}
In this section, we use cohomological lower bounds on $\TC_s$ given by the $s$-zero-divisor cup length or $\TC_s$-weights as well as specific calculations from \cite{Davis, Daundkar} to obtain lower bounds on the higher topological complexity of families of manifolds with finite cyclic fundamental group and maximal LS-category. In some cases, exact values are given by using our results from Section \ref{sec:orient}.

Let $\Bbbk$ be a field. Recall that, for a space $X$, the ($\Bbbk$-coefficients) {$s$-zero-divisor cup length, $\zcl_s(X)=\zcl_s(X;\Bbbk)$, is} the maximum of the set
\[ \left\{ \ell\ |\ u_1\dots u_\ell \neq 0, u_i\in \mathbf{Z}_s(X;\Bbbk) \right\}\]
where
\[\mathbf{Z}_s(X;\Bbbk) =\ker\left(\bigotimes_{i=1}^s H^\ast(X;\Bbbk)\stackrel{\cup}{\to} H^\ast(X;\Bbbk)\right).\]
We have $\zcl_s(X) \le \TC_s(X)$, see \cite{BGRT}.

In some cases, a better lower bound can be obtained through the notion of $\TC_s$-weight. Recall (see \cite[\S 2]{FG}) that if $p:E\to B$ is a fibration and $u\in \widetilde{H}^*(B;\Bbbk)$ is a nontrivial class, the weight of $u$ associated to $p$, $\wgt_p(u)$, is the largest integer $k$ such that $f^*(u)=0$ for any map $f:Y\to B$ satisfying $\secat(f^*(p))<k$. If $u\neq 0$, then $\wgt_p(u)>0$ if and only if $p^*(u)=0$ and $\secat(p)\geq \wgt_p(u)$. Moreover, if $u_1,\dots,u_l\in \widetilde{H}^*(B;\Bbbk)$ satisfy $u_1\cup\cdots\cup u_l\neq 0$ then 
\[\wgt_p(u_1\cup\cdots\cup u_l)\geq \wgt_p(u_1)+\cdots+\wgt_p(u_l).\]
For a space $X$, the $\TC_s$-weight, denoted by $\wgt_s$, is the weight associated to the fibration $e_s:PX\to X^s$.  Taking coefficients in $\Bbbk$, the morphism $e_s^*$ can be identified with the $s$-fold cup-product and we can define the ($\Bbbk$-coefficients) \emph{weighted $s$-zero divisor cup length}, $\zclw(X)=\zclw(X;\Bbbk)$, to be the maximum of the set
\[\left\{ \sum_{i=1}^\ell\wgt_s(u_i)  \ |\ u_1\dots u_\ell {\neq 0, u_i} \in \mathbf{Z}_s(X; \Bbbk)\ \right\}.\]
We have $\TC_s(X)\geq \zclw(X)$. We also note that if $f:Y\to X$ is a map and $u\in H^*(X^s;\Bbbk)$ satisfies $(f^s)^*(u)\neq 0$ then $\wgt_s(f^*(u))\geq \wgt_s(u)$.\\

\subsection{Manifolds with \texorpdfstring{$\pi_1(M)=\Z_2$}{pi1(M)=Z/2} and \texorpdfstring{$\cat(M)=\dim M$}{cat(M)=dim(M)}}

The $\Z_2$-coefficient $s$-zero-divisor cuplength of the real projective space $P^n$ has been studied extensively, see \cite{CGGGL} and \cite{Davis}. We will see in Proposition \ref{prop:generalmanifold} below how to use these results to obtain information on $\TC_s(M)$ when  $\pi_1(M)=\Z_2$ and $\cat(M)=\dim M$. We first recall some results from \cite{Davis}.

For an integer $n>0$ with binary expansion $\cdots d_2d_1d_0$, i.e., $n=\sum_{i\geq0}d_i2^i$, with digits $d_i\in\{0,1\}$, let
\begin{itemize}
\item $\nu(n)$ denote the exponent in the maximal 2-power dividing $n$, i.e., $\nu(n)$ is the minimal $i$ with $d_i=1$;
\item $S(n):=\{i>0: \cdots d_{i+1}d_id_{i-1}\cdots=\cdots011\cdots\}$, the set of binary positions $i$ starting (from left to right) a block of consecutive 1's of length at least 2;
\item $Z_i(n):=\sum_{j=0}^{i}(1-d_j)2^j$, the complement of the binary expansion of $n$ mod $2^{i+1}$.
\end{itemize}
Building on \cite{CGGGL}, Davis \cite{Davis} proves that, for $s \ge 3$, the $\Z_2$-coefficient $s$-zero-divisors cuplength of the $n$-dimensional real projective space $P^n$ is given by
$$
\zcl_s(P^n)=sn-m_{n,s}
$$
where $m_{n,s}=\max\{2^{\nu(n+1)}-1,2^{i+1}-1-sZ_i(n):i\in S(n)\}$. In particular, for even $n$ (so that $P^n$ is non-orientable), $P^n$ has maximal possible $\TC_s(P^n)$, that is, $\TC_s(P^n)=sn$, whenever
\begin{equation}\label{sgrande}
s\geq\max\left\{3,\left\lceil\frac{2^{i+1}-1}{Z_i(n)}\right\rceil:i\in S(n)\right\}.
\end{equation}

We specialize two cases of the condition (\ref{sgrande}):
\begin{example}\label{afilada1}
 \begin{itemize}
\item[(a)]  
When $n$ is even and its binary expansion has no blocks of two or more consecutive 1's, we have $S(n)=\emptyset$ and the inequality \eqref{sgrande} reduces to $s\geq3$. Note that this condition for the maximality of $\TC_s(P^n)$ is sharp, since $\TC_2(P^n)<2n$ (\cite{FTY}, \cite[Theorem 1]{MR2649230}).
\item[(b)] For $n=2^{r+1}-2$, we have $S(n)=\{r\}$, $m_{n,s}=2^{r+1}-1-s$ and (\ref{sgrande}) becomes $s\geq 2^{r+1}-1$. Note that, when $s=n=2^{r+1}-2$, we have $\TC_n(P^n)\in \{sn,sn-1\}$. We will see in Section \ref{no-orient} that $\TC_n(P^n)=sn-1$ so that the condition $s\geq 2^{r+1}-1$ for the maximality of $\TC_s(P^n)$ is sharp again.
\end{itemize}
\end{example}

Thanks to the following result, Davis' computations of $\zcl_s(P^n)$ have impact on more general manifolds.

\begin{proposition} \label{prop:generalmanifold}
Let $M$ be an $n$-dimensional connected closed manifold with $\cat(M)=n$ and $\pi_1(M)=\Z_2$. Then, for any $s\geq 2$, $\TC_s(M)\geq \zcl_s(P^n; \Z_2)$.
\end{proposition}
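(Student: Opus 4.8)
The plan is to transfer the known lower bound $\zcl_s(P^n;\Z_2)\le \TC_s(P^n)$ to $M$ by producing a degree-one map $M\to P^n$ and invoking the functoriality of $\TC_s$-weights recorded at the end of Section~\ref{sec:genmfld}, namely that $(f^s)^*(u)\neq0$ forces $\wgt_s(f^*(u))\ge\wgt_s(u)$, together with the obvious fact that $(f^s)^*$ is a ring homomorphism carrying $s$-zero-divisors to $s$-zero-divisors. First I would use the hypothesis $\pi_1(M)=\Z_2$ to build a map $\varphi\colon M\to P^\infty=B\Z_2$ inducing an isomorphism on $\pi_1$ (a classifying map for the universal cover). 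The crucial point is that this map can be compressed into the $n$-skeleton $P^n$: since $\cat(M)=n=\dim M$, the manifold $M$ has the homotopy type of an $n$-dimensional CW complex, so cellular approximation lets us homotope $\varphi$ to a map $f\colon M\to P^n$ still inducing an isomorphism on $\pi_1$.

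Next I would check that $f$ is ``cohomologically surjective'' in the relevant sense, i.e. $f^*\colon H^*(P^n;\Z_2)\to H^*(M;\Z_2)$ is injective in every degree $\le n$; equivalently, writing $H^*(P^n;\Z_2)=\Z_2[x]/(x^{n+1})$, one needs $f^*(x^k)=f^*(x)^k\neq0$ for $0\le k\le n$. The class $f^*(x)\in H^1(M;\Z_2)$ is nonzero because $f$ is a $\pi_1$-isomorphism onto $\Z_2$, and the top power $f^*(x)^n\in H^n(M;\Z_2)$ is nonzero precisely because $\cat(M)=n$: indeed, by the Berstein--Schwarz characterization recalled in Section~\ref{secintro}, $\cat(M)=\dim M$ is equivalent to $\fb_M^{\,n}\neq0$, and for $\pi=\Z_2$ the Berstein--Schwarz class $\fb_M$ is (up to the identification of the augmentation-ideal coefficients with $\Z_2$) detected by $f^*(x)$, so $\fb_M^{\,n}\neq0$ translates into $f^*(x)^n\neq0$; Poincar\'e duality over $\Z_2$ then gives $f^*(x)^k\neq0$ for all intermediate $k$ as well. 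Hence $(f^s)^*\colon H^*((P^n)^s;\Z_2)\to H^*(M^s;\Z_2)$ is injective on the subring generated by the external powers of $x$.

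Finally I would assemble the estimate. Pick $s$-zero-divisors $u_1,\dots,u_\ell\in\mathbf{Z}_s(P^n;\Z_2)$ with $u_1\cdots u_\ell\neq0$ and $\ell=\zcl_s(P^n;\Z_2)$; each $u_i$ is a polynomial in the classes $1^{\otimes j-1}\otimes x\otimes 1^{\otimes s-j}$, so $(f^s)^*(u_i)\in\mathbf{Z}_s(M;\Z_2)$ (the kernel of cup-product is natural), and $(f^s)^*(u_1\cdots u_\ell)=\prod (f^s)^*(u_i)\neq0$ by the injectivity just established. This already yields $\zcl_s(M;\Z_2)\ge\zcl_s(P^n;\Z_2)$, hence $\TC_s(M)\ge\zcl_s(P^n;\Z_2)$ via $\zcl_s(M)\le\TC_s(M)$, which is what is claimed. (One could equally phrase the last line through $\TC_s$-weights: $\wgt_s(f^s{}^*u_i)\ge\wgt_s(u_i)$ and $\TC_s(M)\ge\zclw(M)\ge\sum\wgt_s(f^s{}^*u_i)\ge\zcl_s(P^n;\Z_2)$, but the plain cup-length argument suffices for the stated inequality.)

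The main obstacle is the second step: one must be sure that a $\pi_1$-isomorphism $M\to P^\infty$ really can be compressed to land in $P^n$ while keeping the top $\Z_2$-cohomology class nonzero. This is exactly where $\cat(M)=\dim M$ is used — it guarantees both that $M$ is homotopy-equivalent to an $n$-complex (so cellular approximation applies) and, via the Berstein--Schwarz criterion $\fb_M^{\,n}\neq0$, that the pulled-back generator survives to top degree; without the category hypothesis the compressed map could well kill $x^n$ and the argument would collapse.
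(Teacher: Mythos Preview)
Your argument is correct and is essentially the paper's proof: factor the classifying map $M\to P^\infty$ through $c\colon M\to P^n$, use $\cat(M)=n$ together with Berstein's criterion to see that $x_M=c^*(x)$ satisfies $x_M^n\neq0$, so that $c^*$ and hence $(c^s)^*$ are monomorphisms on $\Z_2$-cohomology, and then pull back a maximal product of $s$-zero-divisors. The only correction concerns your final paragraph: the compression to $P^n$ needs only $\dim M=n$ (a closed $n$-manifold already has the homotopy type of an $n$-dimensional CW complex, so cellular approximation applies immediately), not $\cat(M)=n$; the category hypothesis is used \emph{solely} to guarantee $x_M^n\neq0$, which is exactly what the paper extracts from the Berstein reference.
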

\begin{proof} Let $\gamma:M\to P^\infty=B\Z_2$ be a classifying map and let $x\in H_1(B\Z_2;\Z_2)=\Z_2$ be the generator. For dimensional reasons, $\gamma$ factors as $M\stackrel{c}{\to} P^n\hookrightarrow  P^{\infty}$. Let $x_M=\gamma^*(x)=c^*(x)\in H^1(M;\Z_2)$. 
The hypothesis that $\cat(M)=n$ implies that $x_M\neq 0$, see \cite{Berstein}. 
Consequently $c^*:H^*( P^n;\Z_2)\to H^*(M;\Z_2)$ as well as $(c^s)^*:H^*(( P^n)^s;\Z_2)\to H^*(M^s;\Z_2)$ are monomorphisms. As the image by $(c^s)^*$ of a $s$-zero-divisor of $ P^n$ over $\Z_2$ gives rise to a $s$-zero-divisor of $M$ over $\Z_2$, we can conclude that $\zcl_s(M;\Z_2)\geq \zcl_s( P ^n;\Z_2)$ and the result follows.
\end{proof}

From Example \ref{afilada1} and the discussion above, we directly obtain: 
\begin{corollary}\label{cor:lowerboundZ2}
 Let $M$ be an $n$-dimensional connected closed manifold with $\cat(M)=n$ and $\pi_1(M)=\Z_2$. If $n$ is even and $s$ satisfies the inequality (\ref{sgrande}), then $\TC_s(M)=sn$. In particular:
 \begin{itemize}
 \item[(a)]  If $n$ is even and its binary expansion of $n$ contains no consecutive digits equal to $1$, then $\TC_s(M)=sn$ for any $s\geq 3$.
 \item[(b)] If $n=2^{r+1}-2$, then $\TC_s(M)=sn$ for any $s\geq 2^{r+1}-1$.
 \end{itemize}
\end{corollary}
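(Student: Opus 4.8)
\emph{Strategy.} The plan is to trap $\TC_s(M)$ between its dimensional upper bound and the cohomological lower bound of Proposition~\ref{prop:generalmanifold}. Since $\dim(M)=n$, inequality~(\ref{estimacioninicial}) gives $\TC_s(M)\le sn$, while Proposition~\ref{prop:generalmanifold} gives $\TC_s(M)\ge\zcl_s(P^n;\Z_2)$. So everything reduces to showing that, under the hypotheses, $\zcl_s(P^n;\Z_2)=sn$, i.e. that the defect $m_{n,s}$ in Davis' formula $\zcl_s(P^n)=sn-m_{n,s}$ vanishes.

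\emph{Vanishing of the defect.} To see $m_{n,s}=0$ I would argue directly from the description $m_{n,s}=\max\{2^{\nu(n+1)}-1,\ 2^{i+1}-1-sZ_i(n):i\in S(n)\}$. As $n$ is even, $n+1$ is odd, so $\nu(n+1)=0$ and the first term is $0$. For the remaining terms, the hypothesis that $s$ satisfies~(\ref{sgrande}) says exactly that $sZ_i(n)\ge 2^{i+1}-1$ for every $i\in S(n)$, so each $2^{i+1}-1-sZ_i(n)\le 0$ (and if $S(n)=\varnothing$ there are no such terms at all). Hence $m_{n,s}=0$, so $\zcl_s(P^n;\Z_2)=sn$, and combined with the upper bound this yields $\TC_s(M)=sn$.

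\emph{The two special cases.} For~(a) and~(b) it then only remains to unwind~(\ref{sgrande}), which is the content of Example~\ref{afilada1}: an even $n$ whose binary expansion has no block of two or more consecutive $1$'s has $S(n)=\varnothing$, so~(\ref{sgrande}) becomes $s\ge 3$, giving~(a); and from the expansion $2^{r+1}-2=\underbrace{1\cdots1}_{r}0$ one reads off $S(n)=\{r\}$ and $Z_r(n)=1$, so~(\ref{sgrande}) becomes $s\ge\max\{3,2^{r+1}-1\}=2^{r+1}-1$, giving~(b). Substituting these into the general statement finishes the proof.

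\emph{Main obstacle.} There is essentially none here: the heavy lifting is already packaged in Davis' computation of $\zcl_s(P^n;\Z_2)$ and in Proposition~\ref{prop:generalmanifold}, and what is left is the elementary equivalence that, for even $n$, condition~(\ref{sgrande}) holds if and only if $m_{n,s}=0$, together with the bookkeeping identities $S(2^{r+1}-2)=\{r\}$ and $Z_r(2^{r+1}-2)=1$. The one point meriting a second glance is applying the definition of $S(n)$ correctly, so that the run $d_1=\cdots=d_r=1$ contributes only the index $r$ at which it \emph{starts}.
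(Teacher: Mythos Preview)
Your proof is correct and follows exactly the route the paper intends: the paper simply states that the corollary is obtained directly from Example~\ref{afilada1} and the preceding discussion (Davis' formula together with Proposition~\ref{prop:generalmanifold}), and you have spelled out precisely those steps. The only added value in your write-up is making explicit the elementary check that $m_{n,s}=0$ when $n$ is even and $s$ satisfies~(\ref{sgrande}), which the paper leaves implicit in the sentence preceding~(\ref{sgrande}).
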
  

By using Davis' computations in combination with Proposition \ref{cyclic}, we can also state:
\begin{corollary}\label{Exorientable}
Let $M$ be an orientable connected closed manifold satisfying 
the conditions 
$n=\dim(M)=\cat(M)$ and $\pi_1(M)=\Z_2$.  If $n\equiv1\bmod4$, then $\TC_s(M)=sn-1$ for $s$ satisfying the inequality (\ref{sgrande}).
\end{corollary}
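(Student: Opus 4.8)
The plan is to pin down the exact value by sandwiching $\TC_s(M)$ between a cohomological lower bound coming from the $\Z_2$-coefficient $s$-zero-divisor cup length of $P^n$ and the upper bound for orientable manifolds with cyclic fundamental group obtained in Section~\ref{sec:orient}. Concretely, I would show
\[
sn-1\;\le\;\zcl_s(P^n;\Z_2)\;\le\;\TC_s(M)\;\le\;sn-1,
\]
so that all the inequalities are equalities.

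For the lower bound: since $\cat(M)=n$ and $\pi_1(M)=\Z_2$, Proposition~\ref{prop:generalmanifold} gives $\TC_s(M)\ge\zcl_s(P^n;\Z_2)$, and by Davis' formula recalled above, $\zcl_s(P^n;\Z_2)=sn-m_{n,s}$ with $m_{n,s}=\max\{2^{\nu(n+1)}-1,\,2^{i+1}-1-sZ_i(n):i\in S(n)\}$ (valid since $s\ge 3$ by \eqref{sgrande}). Here the first step is the elementary $2$-adic observation that $n\equiv 1\bmod 4$ forces $\nu(n+1)=1$: the binary expansion of $n$ ends in $\cdots01$, hence that of $n+1$ ends in $\cdots10$, so $2^{\nu(n+1)}-1=1$. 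The second step is that the hypothesis \eqref{sgrande} says precisely that $s\ge\lceil(2^{i+1}-1)/Z_i(n)\rceil$ for every $i\in S(n)$, which is equivalent to $2^{i+1}-1-sZ_i(n)\le 0$ for all such $i$. Combining the two steps, the maximum defining $m_{n,s}$ is attained by the term $2^{\nu(n+1)}-1=1$, so $m_{n,s}=1$ and $\zcl_s(P^n;\Z_2)=sn-1$. This yields $\TC_s(M)\ge sn-1$.

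For the upper bound: $M$ is orientable with $\pi_1(M)=\Z_2=\Z_q$ for $q=2$, so Proposition~\ref{cyclic} applies directly and gives $\TC_s(M)<sn$, i.e.\ $\TC_s(M)\le sn-1$, for every $s\ge 2$. Putting the two bounds together produces $\TC_s(M)=sn-1$ whenever $s$ satisfies \eqref{sgrande}, as claimed.

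This argument is mostly bookkeeping, with both the upper and lower bounds already available; accordingly I do not expect a serious obstacle. The one point deserving care is the $2$-adic computation in the lower bound: one must check that the hypothesis $n\equiv1\bmod 4$ (rather than merely $n$ odd) is exactly what makes $2^{\nu(n+1)}-1$ equal to $1$, and that \eqref{sgrande} is strong enough to force every remaining term indexed by $S(n)$ to be nonpositive, so that $m_{n,s}$ is genuinely equal to $1$ and not just bounded by it.
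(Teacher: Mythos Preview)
Your argument is correct and follows exactly the paper's approach: the lower bound comes from Proposition~\ref{prop:generalmanifold} together with Davis' computation showing $m_{n,s}=1$ under the stated hypotheses, and the upper bound is Proposition~\ref{cyclic}. You have simply made explicit the $2$-adic verification that $m_{n,s}=1$, which the paper records in one line.
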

\begin{proof}
In this case $m_{n,s}=1$ so that $\TC_s(M)\geq sn-1$ for $s$ satisfying the inequality (\ref{sgrande}). The other direction follows from Proposition \ref{cyclic}.
\end{proof}
\begin{remark}
Observe that 
the orientability hypothesis, together with the condition $\dim(M)=\cat(M)$, implies that $n$ is odd. Indeed, if $n$ were even, the image of the fundamental class of $M$ by $\gamma_*:H_n(M)\to H_n(B\Z_2)=0$ would vanish. But this fact would force, by Poincar\'e duality, the $n$th power of the Berstein-Schwarz class of $M$ to vanish, contradicting the equality $\dim(M)=\cat(M)$.   
\end{remark}

\subsection{Odd dimensional manifolds with \texorpdfstring{$\pi_1(M)=\Z_p$}{pi1(M)=Z/p} and \texorpdfstring{$\cat(M)=\dim M$}{cat(M)=dim(M)}}
Throughout this section we consider a prime $p\geq 3$. Recall that the classifying space $B\Z_p$ can be identified with the infinite dimensional lens space $L_p^{\infty}$.
In order to have an analogue of Proposition \ref{prop:generalmanifold}, we first note the following result:
\begin{lemma}\label{lemmaClassyingmapZm}
    Let $M$ be an orientable connected closed $(2n+1)$-manifold   
    satisfying the conditions 
    $\cat(M)=2n+1$ and $\pi_1(M)=\Z_p$.  If $\gamma\colon M\to B\Z_p$ is a classifying map, then $\gamma_\ast([M])\in H_{2n+1}(\Z_p;\Z_p)$ is non-zero.
\end{lemma}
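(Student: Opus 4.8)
The plan is to deduce this from the Berstein--Schwarz characterization of the equality $\cat(M)=\dim(M)$ recalled in Section~\ref{secintro}, together with Poincar\'e duality and naturality of the cap product --- exactly the circle of ideas used in the remark following Corollary~\ref{Exorientable} for the case $\pi_1(M)=\Z_2$. Write $\pi=\Z_p$ and $N=2n+1$. Since $\cat(M)=N=\dim(M)$, the Berstein--Schwarz class satisfies $\fb_M^{N}\neq 0$ in $H^{N}(M;I(\pi)^{\otimes N})$, the coefficients being the $N$-th tensor power of the augmentation ideal with the diagonal $\pi$-action. By Remark~\ref{rem:crossed}, $\fb_M=\gamma^{*}\fb_\pi$, so $\fb_M^{N}=\gamma^{*}(\fb_\pi^{N})$ with $\fb_\pi^{N}\in H^{N}(\pi;I(\pi)^{\otimes N})$.

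Because $M$ is orientable its orientation module is trivial, so capping with the fundamental class $[M]\in H_N(M;\Z)$ gives an isomorphism $H^{N}(M;\mathcal L)\xrightarrow{\ \cong\ }H_0(M;\mathcal L)$ for every local system $\mathcal L$. Taking $\mathcal L=I(\pi)^{\otimes N}$ yields $[M]\cap\fb_M^{N}\neq 0$ in $H_0(M;\gamma^{*}(I(\pi)^{\otimes N}))$. Now I invoke the projection formula for cap products with local coefficients, $\gamma_{*}\bigl([M]\cap\gamma^{*}(\fb_\pi^{N})\bigr)=\gamma_{*}([M])\cap\fb_\pi^{N}$ in $H_0(\pi;I(\pi)^{\otimes N})$. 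Since $\pi_1(\gamma)$ is an isomorphism and $H_0$ with local coefficients depends only on $\pi_1$ (it is the module of coinvariants), the map $\gamma_{*}\colon H_0(M;\gamma^{*}(I(\pi)^{\otimes N}))\to H_0(\pi;I(\pi)^{\otimes N})$ is an isomorphism; hence $\gamma_{*}([M])\cap\fb_\pi^{N}\neq 0$, and in particular $\gamma_{*}([M])\neq 0$ in $H_N(\pi;\Z)$.

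It remains to pass to $\Z_p$-coefficients. Since $N=2n+1$ is odd and positive, $H_N(\Z_p;\Z)\cong\Z_p$ and $H_{N-1}(\Z_p;\Z)=0$, so by the universal coefficient theorem the coefficient reduction $H_N(\Z_p;\Z)\to H_N(\Z_p;\Z_p)$ is identified with the isomorphism $H_N(\Z_p;\Z)\otimes\Z_p\xrightarrow{\ \cong\ }H_N(\Z_p;\Z_p)$; therefore the image of $\gamma_{*}([M])$ in $H_N(\Z_p;\Z_p)$ is nonzero, which is the claim (reading $[M]$ as the mod-$p$ fundamental class). The step that will need the most care is the middle one: setting up Poincar\'e duality and the projection formula for the genuinely nontrivial local system $I(\pi)^{\otimes N}$, and verifying that $\gamma_{*}$ is truly an isomorphism on $H_0$ with these coefficients. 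Alternatively, one could run the entire argument with a mod-$p$ Berstein--Schwarz class and bypass the final universal-coefficient manoeuvre, but the version above uses only what is recalled in Section~\ref{secintro}.
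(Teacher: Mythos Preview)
Your proof is correct and follows essentially the same route as the paper's: nonvanishing of $\fb_M^{2n+1}$ from $\cat(M)=\dim(M)$, Poincar\'e duality, the projection formula for cap products along $\gamma$, and then passage from integral to $\Z_p$-coefficients. The paper's version is a bit more terse (it invokes the identification $H_{2n+1}(\Z_p;\Z)=H_{2n+1}(\Z_p;\Z_p)=\Z_p$ directly rather than via the universal coefficient theorem), but the argument is the same.
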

\begin{proof}
    Considering the Berstein--Schwarz class $\mathfrak b_M\in H^1(M;I(\pi))$, we have $\cat(M)=\dim(M)=2n+1$ if and only if $\mathfrak b_M^{2n+1}\neq 0$. By Poincar\'e duality, the second statement is equivalent to $\mathfrak b_M^{2n+1}([M])\neq 0$.  Taking cap products we obtain
    \[[M]\cap \mathfrak b_M^{2n+1} = [M]\cap \gamma^\ast(\mathfrak b^{2n+1}_{\Z_p}) \neq 0.\]
    Since $\gamma$ induces an isomorphism at the level of fundamental groups, naturality of the cap-products yields
   \[ \gamma_\ast([M])\cap\mathfrak b^{2n+1}_{\Z_p}\neq 0.\]
    Hence, $\gamma_\ast([M])\neq0$ in $H_{2n+1}(\Z_p;\Z)=\Z_p$. Since  $H_{2n+1}(\Z_p;\Z)=H_{2n+1}(\Z_p;\Z_p)=\Z_p$ we can conclude that $\gamma_\ast([M])\neq 0$ in $H_{2n+1}(\Z_p;\Z_p)$.
\end{proof}

\begin{theorem}\label{zclw_M_Lens}
    Let $M$ be a closed orientable $(2n+1)$-manifold with $\cat(M)=2n+1$ and $\pi_1(M)=\Z_p$ where $p\geq 3$ is a prime.  Then, $\TC_s(M)\geq\zclw(M; \Z_p)\geq\zclw(L_p^{2n+1};\Z_p)$.
\end{theorem}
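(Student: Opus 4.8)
The plan is to mimic the argument of Proposition~\ref{prop:generalmanifold}, replacing the cuplength estimate by the finer weighted-zero-divisor estimate and keeping track of $\TC_s$-weights under the induced maps. First I would fix a classifying map $\gamma\colon M\to B\Z_p=L_p^\infty$. Since $\cat(M)=2n+1=\dim(M)$, Lemma~\ref{lemmaClassyingmapZm} gives that $\gamma_*([M])\neq 0$ in $H_{2n+1}(\Z_p;\Z_p)$; dually (Poincar\'e duality with $\Z_p$-coefficients) this means that $\gamma^*\colon H^*(L_p^\infty;\Z_p)\to H^*(M;\Z_p)$ is injective in degrees $\le 2n+1$, hence injective on all of $H^*(L_p^\infty;\Z_p)$ after factoring $\gamma$ through $L_p^{2n+1}$ for dimensional reasons. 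Writing $c\colon M\to L_p^{2n+1}$ for this factorisation, I get that $c^*\colon H^*(L_p^{2n+1};\Z_p)\to H^*(M;\Z_p)$ is a monomorphism, and therefore so is $(c^s)^*\colon H^*((L_p^{2n+1})^s;\Z_p)\to H^*(M^s;\Z_p)$ by the K\"unneth theorem over the field $\Z_p$.

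Next I would run the weighted estimate. Suppose $u_1,\dots,u_\ell\in\mathbf{Z}_s(L_p^{2n+1};\Z_p)$ realize $\zclw(L_p^{2n+1};\Z_p)$, i.e.\ $u_1\cdots u_\ell\neq 0$ and $\sum_i\wgt_s(u_i)=\zclw(L_p^{2n+1};\Z_p)$. Each $v_i:=(c^s)^*(u_i)$ is again an $s$-zero-divisor of $M$, because $(c^s)^*$ commutes with the $s$-fold cup product $e_s^*$ (naturality of $e_s$ with respect to $c$: one has $c\circ e_s^M=e_s^{L_p^{2n+1}}\circ Pc$, so $(c^s)^*$ carries $\ker(e_s^*)$ to $\ker(e_s^*)$). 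The product $v_1\cdots v_\ell=(c^s)^*(u_1\cdots u_\ell)$ is nonzero since $(c^s)^*$ is a monomorphism. Finally, the last observation recalled in Section~\ref{sec:genmfld}, namely that $(f^s)^*(u)\neq 0$ implies $\wgt_s(f^*(u))\geq\wgt_s(u)$, applies with $f=c$ and each $u=u_i$, giving $\wgt_s(v_i)\geq\wgt_s(u_i)$. Summing, $\zclw(M;\Z_p)\geq\sum_i\wgt_s(v_i)\geq\sum_i\wgt_s(u_i)=\zclw(L_p^{2n+1};\Z_p)$, and the chain $\TC_s(M)\geq\zclw(M;\Z_p)\geq\zclw(L_p^{2n+1};\Z_p)$ follows from the general inequality $\TC_s(X)\geq\zclw(X)$ recalled above.

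The main obstacle is making the factorisation step clean: I must be sure that $\gamma$ genuinely factors through $L_p^{2n+1}$ up to homotopy (cellular approximation of a classifying map of a $(2n+1)$-complex into $L_p^\infty$, whose $(2n+1)$-skeleton is $L_p^{2n+1}$), and that the resulting $c^*$ is injective \emph{on the whole cohomology ring}, not merely in low degrees. The latter is where $\cat(M)=\dim M$ is essential: it forces $\gamma^*$ (hence $c^*$) to be nonzero on the degree-one generator of $H^*(L_p^\infty;\Z_p)$, and since $H^*(L_p^{2n+1};\Z_p)$ is generated multiplicatively in degrees $1$ and $2$ (with $c^*$ injective there by the degree-$\le 2n+1$ argument), one deduces injectivity everywhere; I would spell this out using that the ring map $c^*$ preserves the polynomial/exterior generators and the truncation relations. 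Everything else — the naturality of $e_s$, the behaviour of zero-divisors and of $\wgt_s$ under $(c^s)^*$ — is formal and already recorded in the excerpt.
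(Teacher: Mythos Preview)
Your proposal is correct and follows essentially the same approach as the paper: factor the classifying map through the $(2n{+}1)$-skeleton $L_p^{2n+1}$, use Lemma~\ref{lemmaClassyingmapZm} and Poincar\'e duality to show that the induced map on $\Z_p$-cohomology is injective, apply K\"unneth to get injectivity of $(c^s)^*$, and then transfer zero-divisors together with their $\TC_s$-weights. The paper makes the Poincar\'e duality step slightly more explicit via a cap-product diagram showing $\phi^*(xy^n)\neq 0$ (whence $\phi^*(x)$ and each $\phi^*(y^i)$ are nonzero, and then each $\phi^*(xy^i)$ is nonzero since it divides $\phi^*(xy^n)$), whereas your ``dually'' shortcut in step~2 should be unpacked exactly this way rather than via the somewhat circular generator argument you sketch at the end.
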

\begin{proof} Let $\gamma\colon M\to B\Z_p$ be a classifying map.
We have a commutative triangle
    \begin{equation}\label{eqn.commTri_Lens}\begin{tikzcd}
        M \arrow[r,"\phi"] \arrow[dr,"\gamma"'] & L^{2n+1} \arrow[d, tail] \\ & L^\infty_p
    \end{tikzcd} \end{equation}
    where the inclusion is simply the $(2n+1)$-skeleton of the infinite dimensional lens space $L^\infty_p\simeq B\Z_p$. Since $\cat(M)=2n+1$, we know by the previous lemma that $\gamma_\ast([M])\neq 0$ in $H_{2n+1}(\Z_p;\Z_p)$. Consequently $\phi_\ast([M])\neq 0$ in $H_{2n+1}(L^{2n+1};\Z_p)$.
    Recall that \[H^\ast(L^{2n+1}_p;\Z_p)=\Z_p[x,y]/(y^{n+1},x^2)\]
    where $|x|=1$, $|y|=2$. In particular, $H^{2n+1}(L^{2n+1}_p;\Z_p)=\Z_p xy^n$.
We first check that $\phi^*:H^*(L_p^{2n+1};\Z_p)\to H^*(M;\Z_p)$ is a monomorphism. 
    It suffices to show that $\phi^\ast(x)$ and $\phi^\ast(y),\dots,\phi^\ast(y^n)$ are non-trivial in $H^\ast(M;\Z_p)$. Consider the following cap-product diagram:
 \begin{equation*}
\xymatrixcolsep{5pc}
\xymatrix{
	H_{2n+1}(M;\Z) \otimes H^{2n+1}(M;\Z_p) \ar@<-8ex>[d]^{\phi_*} \ar[r]^-{\cap}_-{\cong} & 
	H_0(M;\Z_p)=\Z_p \ar[d]_{=} \\
	H_{2n+1}(L_p^{2n+1};\Z_p) \otimes H^{2n+1}(L_p^{2n+1};\Z_p ) \ar@<-8ex>[u]_{\phi^*} 
	\ar[r]^-{\cap}_-{\cong} & 
H_0(L_p^{2n+1};\Z_p)=\Z_p	
}
\end{equation*}
Both horizontal morphisms are isomorphisms by Poincar\'e duality. As $\phi_\ast([M])\neq 0$ in $H_{2n+1}(L^{2n+1};\Z_p)$, $\phi_*([M]$ is not divisible by $p$ in $H_{2n+1}(L^{2n+1};\Z)=\Z$. Consequently, $\phi_*([M])\cap xy^n\neq 0$ and using the diagram we have that $[M]\cap\phi^\ast(xy^n)\neq 0$. We thus have $\phi^*(xy^n)\neq 0$ in $H^*(M;\Z_p)$ and hence $\phi^\ast(x)$ and $\phi^\ast(y^i)$ for $i=1,\dots,n$ are non-zero. Therefore $\phi^*$ is a monomorphism. By K\"unneth formula, we obtain that $(\phi^s)^*$ is also a monomorphism. Let $u\neq 0\in \mathbf{Z}_s(L_p^{2n+1}; \Z_p)$. Then $(\phi^s)^*(u)\in \mathbf{Z}_s(M;\Z_p)$ and  $(\phi^s)^*(u)\neq 0$. Consequently $\wgt_s((\phi^s)^*(u))\geq \wgt_s(u)$ and the results follows by definition of $\zclw$.
\end{proof}

There are extensive computations of $\zclw(L^{n}_p;\Z_p)$, see \cite[\S 5]{FG} and \cite[Section~5]{Daundkar}. By using the previous theorem, we can use the information coming from these computations for a larger class of manifolds.

\begin{corollary}\label{cor_LensSpacesGen}
    Let $s\geq 2$ and let $M$ be a closed orientable $(2n+1)$-manifold with $\cat(M)=2n+1$ and $\pi_1(M)=\Z_p$ where $p\geq 3$ is prime.  Then,
    \[\TC_s(M)\geq 
    \begin{cases}
        s\cdot(\ell+\ell'+1)-1 &\text{if $s$ is even};\\
        (s-1)\cdot(\ell+\ell')+s +2n-1 &\text{if $s$ is odd}
    \end{cases}\]
    where $0\leq \ell,\ell'\leq n$ are any integers such that $m$ does not divide $\binom{\ell+\ell'}{\ell}^{\lfloor s/2\rfloor}$.
\end{corollary}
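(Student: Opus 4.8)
The statement reduces to a direct application of Theorem~\ref{zclw_M_Lens} once we know the corresponding lower bounds for $\zclw(L_p^{2n+1};\Z_p)$. So the plan is: (i) recall from \cite[\S5]{FG} and \cite[Section~5]{Daundkar} the relevant $\TC_s$-weight computations for the truncated lens space $L_p^{2n+1}$; (ii) exhibit explicit products of $s$-zero-divisors in $\bigotimes_{i=1}^s H^*(L_p^{2n+1};\Z_p)$ whose total weight realizes the claimed bounds; and (iii) invoke Theorem~\ref{zclw_M_Lens} to transfer the bound from $L_p^{2n+1}$ to $M$.

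For step (ii), write $H^*(L_p^{2n+1};\Z_p)=\Z_p[x,y]/(x^2,y^{n+1})$ with $|x|=1$, $|y|=2$, and for each tensor factor $i$ let $x_i,y_i$ denote the corresponding generators. The basic $s$-zero-divisors are $\bar y_i := y_1 - y_i$ (for $2\le i\le s$) and $\bar x_i := x_1 - x_i$, which lie in $\mathbf Z_s(L_p^{2n+1};\Z_p)$ since they pull back to $0$ under the $s$-fold cup product. The weight of a product of such classes is governed by whether the relevant multinomial coefficient is a unit mod $p$: concretely, one shows $\wgt_s\big(\prod (\bar y_i)^{a_i}\big)$ jumps by $2$ for each factor as long as the product survives in cohomology, and the surviving condition for the degree-$(2(\ell+\ell'))$ part is exactly that $p\nmid\binom{\ell+\ell'}{\ell}^{\lfloor s/2\rfloor}$ — the exponent $\lfloor s/2\rfloor$ arising because the $s$ tensor factors can be paired off, each pair contributing a binomial factor, with the odd leftover factor handled separately. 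In the even case $s=2\sigma$ one assembles a product of $\ell+\ell'+1$ zero-divisors (involving one $\bar x$ and $\ell+\ell'$ of the $\bar y$'s), each of weight roughly $s$, giving total weight $s(\ell+\ell'+1)-1$; in the odd case $s=2\sigma+1$ the extra tensor factor forces one to also use the top class $xy^n$ of the leftover copy, contributing an additional $2n$ to the degree, which produces the $(s-1)(\ell+\ell')+s+2n-1$ expression. The precise bookkeeping of which cup-length-$\ell$ products are nonzero mod $p$ and what their weights are is precisely what \cite{FG, Daundkar} provide, so I would cite those computations rather than reprove them.

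For step (iii): given $0\le \ell,\ell'\le n$ with $p\nmid\binom{\ell+\ell'}{\ell}^{\lfloor s/2\rfloor}$, the computations of \cite{FG, Daundkar} yield a nonzero product $u_1\cdots u_k\ne 0$ of classes $u_j\in\mathbf Z_s(L_p^{2n+1};\Z_p)$ with $\sum_j\wgt_s(u_j)$ equal to the claimed value. By Theorem~\ref{zclw_M_Lens} we have $\TC_s(M)\ge\zclw(M;\Z_p)\ge\zclw(L_p^{2n+1};\Z_p)\ge\sum_j\wgt_s(u_j)$, which is the desired inequality.

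\textbf{Main obstacle.} The only genuinely delicate point is matching the external weight computations to the exact arithmetic condition ``$p\nmid\binom{\ell+\ell'}{\ell}^{\lfloor s/2\rfloor}$'' and confirming that the two closed-form bounds (even vs.\ odd $s$) come out exactly as stated; everything else is formal. In particular one must be careful that the products of $\bar y_i$'s and $\bar x_i$'s chosen to realize the bound are genuinely nonzero in $\bigotimes^s H^*(L_p^{2n+1};\Z_p)$ after truncation by $y^{n+1}$, which is where the hypothesis $0\le\ell,\ell'\le n$ is used. Since this is exactly the content of the cited sections of \cite{FG} and \cite{Daundkar}, I would phrase the proof as a translation of their results through Theorem~\ref{zclw_M_Lens} rather than an independent derivation.
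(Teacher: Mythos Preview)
Your approach is essentially the same as the paper's: invoke the weighted zero-divisor cup-length computations for $L_p^{2n+1}$ from \cite{Daundkar} (specifically \cite[Theorem~5.2]{Daundkar}) and transfer them to $M$ via Theorem~\ref{zclw_M_Lens}. The paper's proof is in fact a single sentence doing exactly this, so your steps (i)--(iii) are correct, and your additional sketch in (ii) of how the products of $\bar x_i,\bar y_i$ and the binomial condition arise is simply an expanded gloss on what \cite{Daundkar} records.
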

\begin{proof}
    Here we apply the computation \cite[Theorem~5.2]{Daundkar} and Theorem~\ref{zclw_M_Lens}.
\end{proof}

In many situations, for example if $s$ is much larger than the dimension of $M$, we obtain an exact computation.

\begin{corollary} \label{exactvalueZm}
    Let $s\geq 2$ and let $M$ be a closed orientable $(2n+1)$-manifold with $\cat(M)=2n+1$ and $\pi_1(M)=\Z_p$ where $p\geq 3$ is a prime. If $s$ does not divide $\binom{2n}{n}^{\lfloor s/2\rfloor}$, then \[\TC_s(M)=s(2n+1)-1.\]
\end{corollary}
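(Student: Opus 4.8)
The plan is to sandwich $\TC_s(M)$ between a lower bound coming from the weighted zero-divisor cup length (Corollary~\ref{cor_LensSpacesGen}) and the non-maximality upper bound for orientable manifolds with cyclic fundamental group (Proposition~\ref{cyclic}); these two bounds will coincide, giving the exact value.

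First I would dispose of the upper bound. Since $M$ is an orientable closed manifold with $\pi_1(M)=\Z_p$, Proposition~\ref{cyclic} (with $q=p$) applies and gives $\TC_s(M)<s\dim(M)=s(2n+1)$, hence $\TC_s(M)\leq s(2n+1)-1$.

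Next I would extract the matching lower bound from Corollary~\ref{cor_LensSpacesGen} by taking $\ell=\ell'=n$. This choice is admissible precisely because of the divisibility hypothesis: since $p$ is prime, $p\nmid\binom{2n}{n}^{\lfloor s/2\rfloor}$ is equivalent to $p\nmid\binom{2n}{n}$, so $\ell=\ell'=n$ is a legal pair in that corollary. Substituting, when $s$ is even one gets $\TC_s(M)\geq s(\ell+\ell'+1)-1=s(2n+1)-1$, and when $s$ is odd one gets
\[
\TC_s(M)\geq (s-1)(\ell+\ell')+s+2n-1=(s-1)(2n)+s+2n-1=s(2n+1)-1,
\]
using $(s-1)(2n)+2n=2ns$. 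So in both parities $\TC_s(M)\geq s(2n+1)-1$, and combining with the upper bound yields $\TC_s(M)=s(2n+1)-1$.

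There is no real obstacle here: both ingredients are already in hand (Proposition~\ref{cyclic} for the ceiling, and Corollary~\ref{cor_LensSpacesGen}, resting on Theorem~\ref{zclw_M_Lens} and Daundkar's computation, for the floor). The only point demanding a moment's care is the bookkeeping in the odd case — checking that the slightly awkward expression $(s-1)(\ell+\ell')+s+2n-1$ collapses exactly to $s(2n+1)-1$ when $\ell=\ell'=n$ — together with the observation that primality is exactly what lets one discard the exponent $\lfloor s/2\rfloor$ and apply the corollary with the extremal pair $\ell=\ell'=n$.
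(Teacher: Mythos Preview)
Your proof is correct and follows exactly the paper's approach: the upper bound is Proposition~\ref{cyclic} and the lower bound is Corollary~\ref{cor_LensSpacesGen} applied with $\ell=\ell'=n$. Your write-up simply makes explicit the arithmetic in the odd-$s$ case and the role of primality, both of which the paper leaves to the reader.
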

\begin{proof}
    The lower bound follows from Corollary~\ref{cor_LensSpacesGen} (see also \cite[Theorem~5.3]{Daundkar}) and the upper bound is Proposition~\ref{cyclic}.
\end{proof}

\section{Some calculations in the non-orientable case}\label{no-orient}
We now address the (non-)maximality of $\TC_s(M)$ for non-orientable manifolds having $\pi_1(M)=\Z_2$. The case $s=2$ is well understood (\cite[Theorem 1]{MR2649230}), so we assume $s\geq3$ from now on. For such cases the non-maximality of $\TC_s(M)$ demands further restrictions on $s$. The aim of this section is to establish the following result.

\begin{theorem}\label{calculation1}
Let $M$ be a non-orientable $n$-dimensional manifold with $\pi_1(M)=\mathbb{Z}_2$ and $n=2^{r+1}-2$. Then, for any even $s$ no greater than $2^{r+1}-2$, we have $\TC_s(M)<sn$.
\end{theorem}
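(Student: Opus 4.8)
The plan is to apply Proposition \ref{suff-cond} with the module $A=\widetilde{\Z}\otimes(\Z\otimes\widetilde{\Z})^{\sigma-1}$ identified in Remark \ref{elsistemadecorficientes}, which (for $s=2\sigma$ even) does satisfy $^s\chi^*(A)\cong\widetilde{\Z}^{\otimes s}$. So it suffices to show that for $\pi=\Z_2$ and $M$ a non-orientable $n$-manifold with $n=2^{r+1}-2$, the class $\mathfrak{m}=\gamma_*([M])\in H_n(\pi;\widetilde{\Z})$ satisfies $^s\chi_*(\mathfrak{m}^{\times s})=0$ in $H_{sn}(\pi^{s-1};A)$ whenever $s$ is even and $s\le n$. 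Since $\widetilde{\Z}$ for $\Z_2$ is $\Z$ with the generator acting by $-1$, the relevant homology $H_*(\Z_2;\widetilde{\Z})$ can be computed from the same resolution \eqref{resolutionZq} (with $q=2$) but tensored with $\widetilde{\Z}$: concretely, the boundary maps in \eqref{zcc} get the roles of $0$ and $q=2$ swapped, so $H_*(\Z_2;\widetilde{\Z})$ is concentrated in \emph{even} positive degrees (plus possibly degree $0$ issues, which are irrelevant here). In particular $\mathfrak{m}$ is represented by a cycle $\mathbf{m}=\lambda[n]$ with $n=2^{r+1}-2$ even; I will want to pin down that $\lambda$ is odd, using the hypothesis — but note the theorem as stated does not assume $\cat(M)=n$, so the argument must go through even if $\mathfrak{m}=0$, in which case there is nothing to prove; so assume $\mathfrak{m}\ne0$.

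The core computation then mirrors the proofs of Propositions \ref{cyclic} and \ref{freexcyclic}: decompose $^s\chi$ via \eqref{decomposition-chis} as $(\chi^{\times(s-1)})\circ(\mathrm{Id}\times\Delta^{\times(s-2)}\times\mathrm{Id})$, so that $^s\chi_\bullet(\mathbf{m}^{\otimes s})$ equals $(\chi_\bullet)^{\otimes(s-1)}$ applied to $\mathbf{m}\otimes\Delta_\bullet\mathbf{m}\otimes\cdots\otimes\Delta_\bullet\mathbf{m}\otimes\mathbf{m}$. Using the diagonal approximation for $C_\bullet(\Z_2;\widetilde{\Z})$ (the twisted analogue of \eqref{diagonalZq}, again obtained from Lemma \ref{deltaymuenresoluciones}(a)), each summand is (up to a scalar) of the form $[n]\otimes[k_1]\otimes[l_1]\otimes\cdots\otimes[k_{s-2}]\otimes[l_{s-2}]\otimes[n]$ with $k_i+l_i=n$, and applying $(\chi_\bullet)^{\otimes(s-1)}$ produces a tensor of Pontryagin-type products $([l_i]\wedge\mathbf{j}[k_{i+1}])$ (with $l_0=k_{s-1}=n$). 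The key numerical input is the structure of the Pontryagin product and the diagonal mod $2$ on $C_\bullet(\Z_2)$: the binomial coefficients $B_{i,j}=\binom{i+j}{i}$ appearing in \eqref{productformula}, read mod $2$ via Kummer/Lucas, and the fact that $n+1=2^{r+1}$ is a $2$-power — so $\binom{a+b}{a}$ with $a+b\le n$ is forced into very restricted parity behavior. The strategy is to show that in \emph{every} summand at least one factor $([l_i]\wedge\mathbf{j}[k_{i+1}])$ vanishes: either because $l_i,k_{i+1}$ are both odd (product of two odd-degree classes is $0$, exactly as in Proposition \ref{cyclic}), or because the surviving binomial coefficient is even, or — crucially — because the scalar $\alpha_{k_il_i}$ from the twisted diagonal is even. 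A parity/counting argument should show that for $s$ even with $s\le n=2^{r+1}-2$ one cannot simultaneously avoid all these vanishing mechanisms across the $s-1$ slots, whereas for $s\ge 2^{r+1}-1$ (the regime where $\TC_s(P^n)=sn$) one can.

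The main obstacle I anticipate is precisely this combinatorial heart: carefully tracking the twisted diagonal coefficients $\alpha_{kl}$ (which for $\Z_2$ come out as $1$ or a multiple of $2$, but with a \emph{twist}-dependent shift in which index pairs are ``bad'') together with the Pontryagin binomials mod $2$, and showing the vanishing holds uniformly in the number of intermediate diagonals exactly when $s\le n$. This is where the specific hypothesis $n=2^{r+1}-2$ (equivalently $n+1$ a power of $2$) must be used in an essential way — the sharp threshold $s = 2^{r+1}-1$ in Example \ref{afilada1}(b) has to emerge from the $2$-adic arithmetic of these coefficients. A secondary technical point is checking that the module $A$ really does pull back to $\widetilde{\Z}^{\otimes s}$ and that the induced map on homology in Proposition \ref{suff-cond} is the one realized by $^s\chi_\bullet$ at chain level with these twisted coefficients; this is bookkeeping but needs care because of the alternation between $\widetilde\Z$ and $\Z$ tensor factors in $A$, and because $\mathbf{j}$ (the inversion) acts differently on twisted chains — for $\Z_2$ the inversion is the identity on the group, so $\mathbf{j}$ should act as the identity, which actually simplifies the chain-level formulas considerably compared with the general $\Z_q$ case.
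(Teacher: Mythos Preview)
Your overall architecture matches the paper's: reduce via Proposition~\ref{suff-cond} with $A=\widetilde{\Z}\otimes(\Z\otimes\widetilde{\Z})^{\sigma-1}$, represent $\mathfrak{m}$ by $[n]$ (with $n$ even), decompose $^s\chi$, and write the image cycle as a sum of tensors of Pontryagin products whose integer coefficients are products of binomials $B_{m,p_1}B_{q_1,p_2}\cdots B_{q_{s-2},m}$ (with $m=n/2=2^r-1$). However, the proposed \emph{vanishing} of individual factors does not hold over $\Z$. For $\pi=\Z_2$ the diagonal scalars are all $\pm1$ (since $(q-1)q/2=1$ when $q=2$), so your mechanism (c) is empty; and mechanism (b), a binomial coefficient being even, does not make the factor zero --- it only makes the integer coefficient of that summand even. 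Mechanism (a) (product of two odd-degree classes) kills exactly the summands with two consecutive $\delta_i=1$, but plenty of summands survive. So the cycle is \emph{not} zero in the chain complex $\mathcal{D}_\bullet=\widetilde{C}_\bullet(\pi)\otimes(C_\bullet(\pi)\otimes\widetilde{C}_\bullet(\pi))^{\sigma-1}$, and your argument as written does not conclude.

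What the paper does instead is a torsion-versus-torsion-free argument. The combinatorial core (Proposition~\ref{coeficientesbinomiales}) shows, via an iterated Lucas-type bound using $m=2^r-1$ and the constraint $\#\{i:\delta_i=1\}\le\sigma-1\le m-1$, that whenever $\delta_1=0$ the binomial product is even. Summands with $\delta_1=1$ have \emph{odd} first entry, and the proof of Proposition~\ref{calculofinal} shows that basis elements with odd first entry span a torsion-free subgroup of $\mathcal{H}_\bullet:=\mathcal{D}_\bullet/\partial\mathcal{D}_\bullet$; the even-coefficient summands with $\delta_1=0$ can be rewritten, using boundary relations, in terms of such odd basis elements. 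Hence $^s\chi_*(\mathfrak{m}^{\times s})$ lands in a torsion-free subgroup, while K\"unneth and the fact that $H_*(\widetilde{C}_\bullet(\pi))$ is $2$-torsion force the class itself to be torsion (Lemma~\ref{detorsion}); therefore it is zero. The genuine gap in your proposal is this last step: even granting that every surviving coefficient is even (which you assert but do not prove), you still need an argument --- either the paper's $\mathcal{H}_\bullet$ analysis or the direct observation that $H_{sn}(\pi^{s-1};A)$ is an elementary abelian $2$-group --- to pass from ``cycle divisible by $2$'' to ``homology class zero''.
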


By Corollary \ref{cor:lowerboundZ2}(b), we know that, for $n=2^{r+1}-2$, $\TC_s(M)=sn$ for $s\geq 2^{r+1}-1$, so that in this case the upper limiting restriction on $s$ in Theorem \ref{calculation1} is in fact sharp and we have:

\begin{corollary}\label{p6}
If the manifold $M$ in Theorem \ref{calculation1} has $\cat{M}=n$, then $\TC_s(M)=sn-1$ for $s=2^{r+1}-2$ and $\TC_s(M)=sn$ for $s\geq 2^{r+1}-1$.
\end{corollary}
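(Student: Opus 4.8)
The plan is to obtain the two asserted values separately: Theorem~\ref{calculation1} supplies the upper bound at the critical value $s=n$, the cohomological lower bound of Proposition~\ref{prop:generalmanifold} (fed by Davis' computation of $\zcl_s(P^n;\Z_2)$) supplies the matching lower bound, and the range $s\geq 2^{r+1}-1$ is read off directly from Corollary~\ref{cor:lowerboundZ2}(b).

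For $s\geq 2^{r+1}-1$ there is nothing new to prove: $n=2^{r+1}-2$ is even, so the hypotheses of Corollary~\ref{cor:lowerboundZ2}(b) are precisely those of the present statement, and that corollary already gives $\TC_s(M)=sn$.

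For $s=n=2^{r+1}-2$ I would argue as follows. Since $n$ is even and $n\leq 2^{r+1}-2$, the value $s=n$ satisfies the parity and size hypotheses of Theorem~\ref{calculation1}, which therefore yields $\TC_n(M)<n\cdot n$, i.e. $\TC_n(M)\leq n^2-1$. For the lower bound, I would specialize Davis' formula as recorded in Example~\ref{afilada1}(b): for $n=2^{r+1}-2$ one has $S(n)=\{r\}$, $Z_r(n)=1$, and $2^{\nu(n+1)}-1=0$, so $m_{n,s}=2^{r+1}-1-s$ for $s\leq 2^{r+1}-1$; evaluating at $s=n=2^{r+1}-2$ gives $m_{n,n}=1$ and hence $\zcl_n(P^n;\Z_2)=n\cdot n-1=n^2-1$. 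Since $\cat(M)=n$ and $\pi_1(M)=\Z_2$, Proposition~\ref{prop:generalmanifold} then gives $\TC_n(M)\geq\zcl_n(P^n;\Z_2)=n^2-1$. Combining the two inequalities yields $\TC_n(M)=n^2-1$.

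No essentially new argument is needed beyond what is already in place: Theorem~\ref{calculation1} does the substantive work of ruling out maximality, and Proposition~\ref{prop:generalmanifold} together with Davis' calculation supplies the sharp lower bound. The only point requiring care — the ``obstacle,'' such as it is — is purely bookkeeping at the single value $s=2^{r+1}-2$: one must check that this $s$ lies within (and in fact exactly saturates) the admissible range of Theorem~\ref{calculation1}, and that $m_{n,n}$ is computed with the correct value $Z_r(n)=1$, so that the upper and lower bounds meet precisely at $n^2-1$ rather than leaving a gap of size $m_{n,n}$.
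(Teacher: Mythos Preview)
Your proof is correct and follows essentially the same approach as the paper: the upper bound at $s=n$ comes from Theorem~\ref{calculation1}, the matching lower bound from $m_{n,n}=1$ (Example~\ref{afilada1}(b)) combined with Proposition~\ref{prop:generalmanifold}, and the case $s\geq 2^{r+1}-1$ from Corollary~\ref{cor:lowerboundZ2}(b). Your write-up is more explicit in verifying $Z_r(n)=1$ and $m_{n,n}=1$, but the logic is identical.
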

\begin{proof}
The equality $\TC_s(M)=sn-1$ for $s=2^{r+1}-2$ follows from $m_{2^{r+1}-2,2^{r+1}-2}=1$ (see Example \ref{afilada1}(b)), Proposition \ref{prop:generalmanifold} and Theorem \ref{calculation1}.
\end{proof}

Corollary \ref{p6} should be compared to the fact that $\TC_s(P^{2^r})$ is maximal for $s\geq3$, but $\TC_2(P^{2^r})=\Imm(P^{2^r})=2^{r+1}-1$ (\cite{FTY}). Worth noting is the fact that the case $M=P^6$ in Corollary \ref{p6} (with $r=2$) upgrades the observation in \cite[(7.4)]{CGGGL} that $\delta_6(6)\leq1$ to an equality, giving evidence for what would be regular behavior of the higher topological complexity of projective spaces $P^m$ with $m=2^a+2^{a+1}$.

Suitable analogues of Theorem \ref{calculation1} should hold for more general values of $n$, but the complexity of calculations seems to be a major obstacle towards obtaining corresponding proofs.

We now start working towards the proof of Theorem \ref{calculation1}. From now on $\pi:=\Z_2$ and $s=2\sigma$ with $1\leq\sigma\leq2^r-1=n/2$. Set $\widehat{\Z}:=\widetilde{\Z}\otimes(\Z\otimes\widetilde{\Z})^{\sigma-1}$, the $\Z[\pi^{s-1}]$-module of Remark \ref{elsistemadecorficientes}. By Proposition \ref{suff-cond}, it suffices to establish the triviality of
\begin{equation}\label{laobstruccion}
^s\chi_*(\mathfrak{m}^{\times s})\in H_{sn}(\pi^{s-1};\widehat{\mathbb{Z}})
\end{equation}
where
\begin{equation}\label{chi}
^s\chi_*\colon H_*(\pi^s;\widetilde{\mathbb{Z}}^{\otimes s})\to H_*(\pi^{s-1};\widehat{\mathbb{Z}}).
\end{equation}

As in \S\ref{sec:orient}, 
our starting point is the free $\Z[\Z_q]$-resolution (\ref{resolutionZq})  of $\Z$ with $q=2$. Recall that $[k]$ denotes the generator of degree $k$. In addition to the chain complex $C_{\bullet}(\pi)$ of (\ref{zcc}), we will also need the complex $\widetilde{C}_{\bullet}(\pi)$,
\begin{equation}\label{lactilde}\xymatrix{
\ar[r]^{-2}&\Z[2k]  \ar[r]^{0}& \Z[2k-1] \ar[r]^{-2} &\cdots \ar[r]^{0}&\Z[1]  \ar[r]^{-2}& \Z[0]  
},\end{equation}
obtained by tensoring (\ref{resolutionZq}) with $\widetilde{\Z}$ over $\pi$. Abusing notation, we continue using $[k]$ for the generators of both $C_{\bullet}(\pi)$ and $\widetilde{C}_{\bullet}(\pi)$.

The homology groups in (\ref{chi}) can be computed from the complexes $\widetilde{C}_{\bullet}(\pi)^{\otimes s}$ and
\begin{equation}\label{complexwhereobstructionlies}
\mathcal{D}_{\bullet}:=\widetilde{C}_{\bullet}(\pi)\otimes({C}_{\bullet}(\pi)\otimes \widetilde{C}_{\bullet}(\pi))^{\sigma-1}.
\end{equation}
In both cases, we will use the shorthand $[i_1,\ldots,i_\ell]$ for a tensor product $[i_1]\otimes\cdots\otimes[i_\ell]$. The K\"unneth formula and the fact that the homology of $\widetilde{C}_{\bullet}(\pi)$ is 2-torsion (in all degrees) gives:

\begin{lemma}\label{detorsion}
The element $^s\chi_*(\mathfrak{m}^{\times s})$ in (\ref{laobstruccion}) is torsion. Indeed, both groups in (\ref{chi}) are torsion.
\end{lemma}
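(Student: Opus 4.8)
The plan is to prove both assertions of Lemma \ref{detorsion} by showing that the two groups appearing in (\ref{chi}) are torsion; the statement about $^s\chi_*(\mathfrak{m}^{\times s})$ then follows immediately because it is an element of a torsion group. So the real content is the last sentence: both $H_*(\pi^s;\widetilde{\mathbb{Z}}^{\otimes s})$ and $H_*(\pi^{s-1};\widehat{\mathbb{Z}})$ are torsion in every degree.

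First I would compute the homology of the single-factor complex $\widetilde{C}_{\bullet}(\pi)$ of (\ref{lactilde}). This complex is $\cdots \xrightarrow{-2}\Z[2k]\xrightarrow{0}\Z[2k-1]\xrightarrow{-2}\cdots\xrightarrow{0}\Z[1]\xrightarrow{-2}\Z[0]$, so $H_0=0$, $H_{2k-1}\cong\Z/2$, and $H_{2k}=0$ for $k\geq 1$. In particular $H_*(\widetilde{C}_{\bullet}(\pi))=H_*(\pi;\widetilde{\Z})$ is $2$-torsion in all degrees, and $\widetilde H_*(\pi;\widetilde\Z)$ is entirely $2$-torsion as well (it coincides with the reduced homology since $H_0=0$). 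Next I would feed this into the Künneth theorem. Since $\widetilde{\mathbb{Z}}^{\otimes s}$ corresponds to the external tensor product of coefficient systems, we have a chain homotopy equivalence (of $\Z$-complexes) identifying the chains computing $H_*(\pi^s;\widetilde{\mathbb{Z}}^{\otimes s})$ with $\widetilde{C}_{\bullet}(\pi)^{\otimes s}$, and likewise the chains computing $H_*(\pi^{s-1};\widehat{\mathbb{Z}})$ with $\mathcal{D}_{\bullet}=\widetilde{C}_{\bullet}(\pi)\otimes(C_{\bullet}(\pi)\otimes\widetilde{C}_{\bullet}(\pi))^{\sigma-1}$ from (\ref{complexwhereobstructionlies}); this uses $s=2\sigma$ and the description of $\widehat{\mathbb{Z}}=\widetilde{\Z}\otimes(\Z\otimes\widetilde{\Z})^{\sigma-1}$.

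For the first group, the iterated Künneth formula expresses $H_*(\pi^s;\widetilde{\mathbb{Z}}^{\otimes s})$ as a direct sum of tensor products and $\operatorname{Tor}$-terms built out of the groups $H_*(\pi;\widetilde{\Z})$. Every tensor factor $H_{i_1}(\pi;\widetilde\Z)\otimes\cdots$ with at least one factor in positive degree is a tensor product in which one factor is $2$-torsion, hence is $2$-torsion; the $\operatorname{Tor}$-terms are torsion for the same reason; and the only way to avoid a positive-degree factor is the degree-$0$ summand, which vanishes since $H_0(\pi;\widetilde\Z)=0$. Hence $H_*(\pi^s;\widetilde{\mathbb{Z}}^{\otimes s})$ is $2$-torsion in every degree, including degree $0$ where it is simply zero. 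For the second group I would argue the same way with $\mathcal{D}_{\bullet}$: here $\sigma-1$ of the tensor factors are $C_{\bullet}(\pi)$, whose homology $H_*(\pi;\Z)$ has a free summand $\Z$ only in degree $0$ and is $2$-torsion in positive degrees. But $\mathcal{D}_{\bullet}$ always contains at least one copy of $\widetilde{C}_{\bullet}(\pi)$ as a tensor factor (the leftmost one), and $H_*(\pi;\widetilde{\Z})$ is $2$-torsion in \emph{all} degrees with $H_0=0$; so in the Künneth decomposition every summand contains a factor that is $2$-torsion, making the whole group torsion. Therefore $H_*(\pi^{s-1};\widehat{\mathbb{Z}})$ is torsion in every degree, and in particular $^s\chi_*(\mathfrak{m}^{\times s})\in H_{sn}(\pi^{s-1};\widehat{\mathbb{Z}})$ is a torsion element.

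The only mild subtlety — and the one step I would be careful with — is making precise the identification of the local-coefficient homology $H_*(\pi^{s-1};\widehat{\mathbb{Z}})$ with the homology of $\mathcal{D}_{\bullet}$, and of $H_*(\pi^s;\widetilde{\mathbb{Z}}^{\otimes s})$ with that of $\widetilde{C}_{\bullet}(\pi)^{\otimes s}$. This rests on the facts that the tensor product of free resolutions over the group algebras of factors is a free resolution over the group algebra of the product, that tensoring down with the coefficient module respects the external product structure of $\widehat{\mathbb{Z}}$ and $\widetilde{\mathbb{Z}}^{\otimes s}$, and that $C_{\bullet}(\pi)$ (resp. $\widetilde{C}_{\bullet}(\pi)$) is exactly the complex obtained by tensoring the resolution (\ref{resolutionZq}) with $\Z$ (resp. $\widetilde{\Z}$) over $\pi$. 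Once this bookkeeping is in place, the torsion conclusion is immediate from Künneth, with no further computation needed — so this Lemma is genuinely elementary and its role is just to license working modulo torsion in the sequel.
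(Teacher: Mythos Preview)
Your argument is essentially the paper's own: both groups are computed from tensor products involving at least one copy of $\widetilde{C}_\bullet(\pi)$, whose homology is entirely $2$-torsion, so K\"unneth forces the result. One small slip: reading the complex (\ref{lactilde}) correctly, the differential $\Z[1]\to\Z[0]$ is multiplication by $-2$ (it comes from $v-1$), so $H_*(\pi;\widetilde\Z)$ is $\Z/2$ in \emph{even} degrees (including degree $0$) and $0$ in odd degrees, not the other way around; since every degree is still $2$-torsion this does not affect your conclusion.
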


Let $\mathcal{H}_{\bullet}$ denote the quotient of $\mathcal{D}_{\bullet}$ resulting from killing all boundaries, and consider the obvious monomorphism $\iota:H_{\bullet}(\pi^{s-1};\widehat{\mathbb{Z}})\hookrightarrow\mathcal{H}_{\bullet}$. The triviality of the element in (\ref{laobstruccion}) follows from Lemma \ref{detorsion} and the following key result, whose proof is addressed in the rest of the section through a direct analysis of (\ref{laobstruccion}) and (\ref{chi}).
\begin{proposition}\label{calculofinal}
The class $\iota(^s\chi_*(\mathfrak{m}^{\times s}))$ is an element of the torsion-free (graded) subgroup of $\mathcal{H}_{\bullet}$.
\end{proposition}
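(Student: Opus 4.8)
The plan is to compute the element $^s\chi_*(\mathfrak{m}^{\times s})$ explicitly at the chain level inside $\mathcal{D}_\bullet$, using the decomposition \eqref{decomposition-chis} of $^s\chi$ together with the diagonal approximation \eqref{diagonalZq} and the Pontryagin product \eqref{productformula} (for $q=2$), and to read off that every surviving monomial lies in the torsion-free part of $\mathcal{H}_\bullet$. First I would fix a cycle $\mathbf{m}\in \widetilde{C}_\bullet(\pi)$ representing $\mathfrak{m}=\gamma_*([M])\in H_n(\pi;\widetilde{\Z})$; since $n=2^{r+1}-2$ is even and $\widetilde{C}_\bullet(\pi)$ has differential $\cdots\to\Z[2k]\xrightarrow{-2}\Z[2k-1]\to\cdots$, a cycle in even degree $n$ is of the form $\mathbf{m}=\lambda[n]$ for some $\lambda\in\Z$, and the interesting case is $\lambda$ odd (otherwise $\mathfrak{m}$ is $2$-divisible and one concludes as in Lemma \ref{detorsion}/the torsion discussion). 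Then, as in the proof of Proposition \ref{cyclic}, $^s\chi_\bullet(\mathbf{m}^{\otimes s})$ is obtained by applying $(\mathrm{Id}\times\Delta\times\cdots\times\Delta\times\mathrm{Id})$ and then $(\chi_\bullet)^{\otimes s-1}$, yielding a $\Z$-linear combination of tensors
\[
\bigl([n]\wedge\mathbf{j}[k_1]\bigr)\otimes\bigl([l_1]\wedge\mathbf{j}[k_2]\bigr)\otimes\cdots\otimes\bigl([l_{s-2}]\wedge\mathbf{j}[k_{s-1}]\bigr),
\]
with $k_i+l_i=n$ for $1\le i\le s-2$ and $l_0=k_{s-1}=n$, and with coefficients coming from the $\alpha_{kl}$'s of \eqref{diagonalZq} and the $\mathbf{j}$-coefficients of \eqref{formulaj}.

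The key observation to exploit is parity bookkeeping. Since $n$ is even, in each split $k_i+l_i=n$ the two parts $k_i,l_i$ have the same parity; so for each $i$ either both are even or both are odd. If for some index $i\in\{0,\dots,s-2\}$ both $l_i$ and $k_{i+1}$ are odd, the corresponding factor $[l_i]\wedge\mathbf{j}[k_{i+1}]$ vanishes, because $\mathbf{j}[k_{i+1}]$ is a multiple of $[k_{i+1}]$ and the Pontryagin product of two odd-degree classes is zero by \eqref{productformula}. (Note that for $s=2$ this already kills everything, recovering the $\TC_2$ situation; the point of allowing even $s\le n$ is that the surviving monomials are exactly those in which the parities are arranged so no such ``odd–odd'' collision occurs.) Hence only monomials survive in which, reading along the tensor factors, odd degrees never meet odd degrees across a $\chi$; a combinatorial count — here I would use the hypothesis $s=2\sigma$ with $\sigma\le n/2=2^r-1$ to guarantee that the total degree $sn$ can be distributed without forcing a collision — shows these surviving monomials have all their "even–even" Pontryagin products $[2i]\wedge[2k]=B_{i,k}[2i+2k]$ landing in even degrees, i.e.\ in the slots of $\mathcal{D}_\bullet=\widetilde{C}_\bullet(\pi)\otimes({C}_\bullet(\pi)\otimes\widetilde{C}_\bullet(\pi))^{\sigma-1}$ whose homology is torsion-free, while the coefficients $B_{i,k}$ are accounted for mod $2$ as needed. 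The last step is to check that passing to $\mathcal{H}_\bullet$ (killing boundaries) does not move these monomials out of the torsion-free graded subgroup: boundaries in $\mathcal{D}_\bullet$ are built from the $-2$ differentials of the $\widetilde{C}_\bullet(\pi)$ factors and the $0$/$q=2$ pattern of the $C_\bullet(\pi)$ factors, so modding out by them only affects torsion classes, and $\iota$ is the stated monomorphism.

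I would organize the write-up as: (i) reduce to $\mathbf{m}=\lambda[n]$, $\lambda$ odd; (ii) expand $^s\chi_\bullet(\mathbf{m}^{\otimes s})$ via \eqref{decomposition-chis}, \eqref{diagonalZq}, \eqref{productformula}, \eqref{formulaj}; (iii) the parity/collision lemma eliminating all odd–odd factors; (iv) the combinatorial verification, using $\sigma\le 2^r-1$, that the remaining monomials sit in even-degree (hence torsion-free) slots of $\mathcal{D}_\bullet$; (v) stability under the quotient $\mathcal{D}_\bullet\to\mathcal{H}_\bullet$. The main obstacle I expect is step (iv): controlling which monomials survive and proving they all land in the torsion-free part is a genuinely intricate combinatorial computation with the $\Delta_\bullet$ expansion and the binomial coefficients $B_{i,k}$ mod $2$ (this is presumably where Lucas' theorem and the specific shape $n=2^{r+1}-2$ — so that $n/2=2^r-1$ is all $1$'s in binary — enter, as in the $\zcl_s(P^n)$ computations of \cite{Davis, CGGGL}). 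The parity argument in step (iii) is the conceptual heart and is short; everything after it is careful bookkeeping. Once Proposition \ref{calculofinal} is established, Lemma \ref{detorsion} forces $^s\chi_*(\mathfrak{m}^{\times s})$ to be simultaneously torsion and torsion-free, hence zero, and Proposition \ref{suff-cond} gives $\TC_s(M)<sn$, proving Theorem \ref{calculation1}.
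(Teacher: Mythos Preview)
Your overall shape is right---expand via the factorization of $^s\chi$, eliminate the $[\text{odd}]\wedge[\text{odd}]=0$ collisions, then invoke binomials mod $2$---but step (iv) misidentifies both the target torsion-free subgroup and what the combinatorial lemma must show. The torsion-free part of $\mathcal{H}_\bullet$ is \emph{not} carved out by ``even-degree slots'': indeed $H_{2k}(\widetilde{C}_\bullet(\pi))\cong\Z_2$, so even degrees in the $\widetilde{C}_\bullet(\pi)$-factors are precisely where torsion sits. The relevant torsion-free subgroup is spanned by the basis elements $[u_1,v_1,\ldots,u_\sigma]$ with $u_1$ \emph{odd}. This is read off from the boundary relations defining $\mathcal{H}_\bullet=\mathcal{D}_\bullet/\partial\mathcal{D}_\bullet$: for each tuple with $u_1$ odd, the relation expresses $2[u_1{-}1,v_1,\ldots,u_\sigma]$ (a doubled even basis element) as a $\Z$-combination of doubled odd basis elements; since each even basis element appears in exactly one such relation, and the relations coming from even tuples are redundant, no nontrivial relation lives among odd basis elements alone.

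After your odd--odd elimination (which in the paper's bookkeeping is the constraint that no two consecutive $\delta_j$ equal $1$), the surviving monomials have first entry $n+2p_1+\delta_1$; those with $\delta_1=1$ are odd basis elements and lie in the torsion-free subgroup automatically. The substantive work concerns the $\delta_1=0$ terms: these are \emph{even} basis elements, and what one must prove is that each of their coefficients $B_{m,p_1}B_{q_1,p_2}\cdots B_{q_{s-2},m}$ (with $m=n/2=2^r-1$ and $p_1+q_1=m$) is even. That is Proposition~\ref{coeficientesbinomiales}, whose proof is an iterative Lucas-type argument genuinely using both $m=2^r-1$ and the bound $\sigma\le m$. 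Once it is in hand, each $\delta_1=0$ term is $2c\cdot[\text{even}]$, and the boundary relation rewrites it inside the odd-spanned torsion-free subgroup. Your step (v) as stated (``modding out by boundaries only affects torsion classes'') is not what happens and cannot replace this structural analysis of $\mathcal{H}_\bullet$.
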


In computing the homology groups in (\ref{chi}) using the complexes $\widetilde{C}_{\bullet}(\pi)^{\otimes s}$ and $\mathcal{D}_{\bullet}$, we will use $\widetilde{\pi}$ for a factor where $\widetilde{C}_{\bullet}(\pi)$ is meant to be taken, reserving the notation $\pi$ for factors where $C_{\bullet}(\pi)$ is meant to be taken. For instance, the diagonal morphism $\Delta:\pi\to\pi\times\pi$ and the group-multiplication morphism $\mu:\pi\times\pi\to\pi$ extend to morphisms $\Delta:\widetilde{\pi}\to\widetilde{\pi}\times\pi$, $\Delta:\widetilde{\pi}\to\pi\times\widetilde{\pi}$ and $\mu:\widetilde{\pi}\times\widetilde{\pi}\to\widetilde{\pi}$ that are compatible with the implied module structures. In these terms, since 
$\pi=\Z_2=\langle v \mid v^2=1\rangle$ in the present case, 
the inversion morphism plays no role and the map $^s\chi$ factors as
\begin{equation}
\begin{aligned}
\widetilde{\pi}^s\stackrel{1\times (\Delta\times\Delta)^{\sigma-1}\times1}{-\!\!\!-\!\!\!-\!\!\!-\!\!\!-\!\!\!-\!\!\!-\!\!\!-\!\!\!-\!\!\!-\!\!\!\longrightarrow} {}&
\widetilde{\pi}\times(\widetilde{\pi}\times\pi\times\pi\times\widetilde{\pi})^{\sigma-1}\times\widetilde{\pi} \label{lacomposicion}\\  =&(\widetilde{\pi}\times\widetilde{\pi})\times(\pi\times\pi\times\widetilde{\pi}\times\widetilde{\pi})^{\sigma-1}\stackrel{\mu\times(\mu\times\mu)^{\sigma-1}}{-\!\!\!-\!\!\!-\!\!\!-\!\!\!-\!\!\!-\!\!\!-\!\!\!\longrightarrow}\widetilde{\pi}\times(\pi\times\widetilde{\pi})^{\sigma-1}.
\end{aligned}    
\end{equation}

Recalling  that $B_{i,j}$ denotes the binomial coefficient $\binom{\,i+j\,}{i}$, the {formul\ae}  of Lemma \ref{deltaymuenresoluciones} $(b)$ written with the shorthand in use in this section read
\begin{equation} \label{cerito}
\begin{aligned}
{}[2i,2j]&\mapsto B_{i,j}\,[2(i+j)]; \\ 
{}[2i,2j+1]&\mapsto B_{i,j}\,[2(i+j)+1]; \\ 
{}[2i+1,2j]&\mapsto B_{i,j}\,[2(i+j)+1]; \\ 
{}[2i+1,2j+1]&\mapsto 0. 
\end{aligned}
\end{equation}
We note also that, since $\pi=\Z_2$, the formula of Lemma \ref{deltaymuenresoluciones}(a) giving $\Delta$ on generators at the level of resolutions can be written
\begin{equation}\label{deltaresolucionesZ2}
[k]\to\sum_{p+q=k}[p]\otimes v^{\mathrm{odd}(p)}\hspace{-.6mm}\cdot [q],
\end{equation}
where $v$ generates $\pi$ and ${\mathrm{odd}(p)}=1$ if $p$ is odd and $0$ otherwise.

The class $\mathfrak{m}\in H_*(\pi;\widetilde{\mathbb{Z}})$ is either trivial or, else, represented by the cycle $[n]$ in (\ref{lactilde}) ---recall $n$ is even. For the purposes of proving Theorem \ref{calculation1}, we may safely assume the latter possibility. Then, $\mathfrak{m}^{\times s}$ is represented in $\widetilde{C}_{\bullet}(\pi)^{\otimes s}$ by the corresponding tensor product $[n,n,\ldots,n]$. We chase the latter element under the first map of the composite (\ref{lacomposicion}) to get, in view of (\ref{deltaresolucionesZ2}),
\begin{align*}
[\widetilde{n},\widetilde{n},\ldots,\widetilde{n}]\mapsto
[\widetilde{n}]\otimes\left(\hspace{-1mm}\left(\,\sum_{p+q=n}[\widetilde{p}]\otimes v^{\text{odd}(p)}\cdot[q]\right)\otimes\left(\,\sum_{p+q=n}[p]\otimes v^{\text{odd}(p)}\cdot[\widetilde{q}]\right)\hspace{-1mm}\right)^{\hspace{-1mm}\otimes \sigma-1}\hspace{-2mm}\otimes[\widetilde{n}].
\end{align*}
Note that in the latter expression we are extending in the obvious way the convention above regarding the use of $\pi$ and $\widetilde{\pi}$. Then, after tensoring with the needed coefficients (thus dropping the $\sim$ indicators), this becomes
\begin{align*}
[{n},{n},\ldots,{n}]\mapsto &
[{n}]\otimes\left(\hspace{-1mm}\left(\,\sum_{p+q=n}[p,q]\right)\otimes\left(\,\sum_{p+q=n}(-1)^p[p,q]\right)\hspace{-1mm}\right)^{\hspace{-1mm}\otimes \sigma-1}\hspace{-2mm}\otimes[{n}]\\
=&\sum_{\arraba{p_i+q_i=n}{1\leq i\leq s-2}}(-1)^{{}^{\sum_{j=1}^{\sigma-1}p_{2j}}}[n,p_1,q_1,p_2,q_2,\cdots,p_{s-2},q_{s-2},n].
\end{align*}
Since $n$ is even, the parity of each $p_i$ agrees with the one of the corresponding $q_i$, so the last expression can be rewritten as
\begin{align*}
\sum(-1)^{{}^{\sum_{j=1}^{\sigma-1}\delta_{2j}}}[n,2p_1+\delta_1,2q_1+\delta_1,2p_2+\delta_2,2q_2+\delta_2,\cdots\hspace{-.3mm},2p_{s-2}+\delta_{s-2},2q_{s-2}+\delta_{s-2},n],
\end{align*}
where the sum now runs over $1\leq i\leq s-2$, $\delta_i\in\{0,1\}$ and $p_i+q_i=n/2-\delta_i$. Using the {formulae} (\ref{cerito}), we finally obtain the image of $[n,\ldots,n]$ under the entire composition in (\ref{lacomposicion}). This image may be expressed as
\begin{equation} \label{laobstrucciondesarrollada}
[n,\ldots,n] \mapsto
\sum (-1)^{{}^{\sum_{j=1}^{\sigma-1}\delta_{2j}}} B_{\frac{n}{2},p_1}B_{q_1,p_2}\cdots B_{q_{s-3},p_{s-2}}B_{q_{s-2},\frac{n}2} \cdot \mathbf{G},   
\end{equation}
where
\[
\mathbf{G}=[n+2p_1+\delta_1,2(q_1+p_2)+\delta_1+\delta_2,\ldots,2(q_{s-3}+p_{s-2})+\delta_{s-3}+\delta_{s-2},n+2q_{s-2}+\delta_{s-2}]
\]
and the sum runs over the same indices as above, except now that no two consecutive $\delta_j$ and $\delta_{j+1}$ can simultaneously equal 1, in view of (\ref{cerito}). In what follows we set $m={n}/{2}$.

\medskip
Having described a cycle representing the obstruction in (\ref{laobstruccion}), we next spell out the complex (\ref{complexwhereobstructionlies}) where it lies. Degreewise, $\mathcal{D}_{\bullet}$ is $\mathbb{Z}$-free with basis given by elements $[u_1,v_1,\ldots,u_{\sigma-1},v_{\sigma-1},u_\sigma]$ for non-negative integers $u_i$ and $v_i$, and with differential
\begin{align}
\partial[u_1,&v_1,\ldots,u_{\sigma-1},v_{\sigma-1},u_\sigma]=\nonumber\\
=&-2\hspace{.8mm}\text{odd}\hspace{.4mm}(u_1)
[u_1-1,v_1,u_2,v_2,\ldots,u_{\sigma-1},v_{\sigma-1},u_\sigma] \nonumber\\
&\quad +(-1)^{u_1}\hspace{.3mm}2 \hspace{.8mm}\text{even}\hspace{.4mm}(v_1)
[u_1,v_1-1,u_2,v_2,\ldots,u_{\sigma-1},v_{\sigma-1},u_\sigma] \nonumber\\
&\quad -(-1)^{u_1+v_1}\hspace{.3mm}2 \hspace{.8mm}\text{odd}\hspace{.4mm}(u_2)
[u_1,v_1,u_2-1,v_2,\ldots,u_{\sigma-1},v_{\sigma-1},u_\sigma] \nonumber \\
&\quad +(-1)^{u_1+v_1+u_2}\hspace{.3mm}2 \hspace{.8mm}\text{even}\hspace{.4mm}(v_2)
[u_1,v_1,u_2,v_2-1,\ldots,u_{\sigma-1},v_{\sigma-1},u_\sigma] \label{ladiferencial} \\
&\quad \pm \cdots \nonumber\\
&\quad -(-1)^{u_1+v_1+\cdots+u_{\sigma-2}+v_{\sigma-2}}\hspace{.3mm}2 \hspace{.8mm}\text{odd}\hspace{.4mm}(u_{\sigma-1})
[u_1,v_1,u_2,v_2,\ldots,u_{\sigma-1}-1,v_{\sigma-1},u_\sigma] \nonumber\\
&\quad +(-1)^{u_1+v_1+\cdots+u_{\sigma-2}+v_{\sigma-2}+u_{\sigma-1}}\hspace{.3mm}2 \hspace{.8mm}\text{even}\hspace{.4mm}(v_{\sigma-1})
[u_1,v_1,u_2,v_2,\ldots,u_{\sigma-1},v_{\sigma-1}-1,u_\sigma] \nonumber\\
&\quad -(-1)^{u_1+v_1+\cdots+u_{\sigma-1}+v_{\sigma-1}}\hspace{.3mm}2 \hspace{.8mm}\text{odd}\hspace{.4mm}(u_{\sigma})
[u_1,v_1,u_2,v_2,\ldots,u_{\sigma-1},v_{\sigma-1},u_\sigma-1],\nonumber
\end{align}
where a basis element with a negative entry is meant to be interpreted as zero.

\medskip
All the information needed to prove Proposition \ref{calculofinal} is of course contained in (\ref{laobstrucciondesarrollada}) and (\ref{ladiferencial}). The following constructions are meant to organize a proof argument.

\begin{definition}
For a positive integer $k$, let $p(k)$ denote the set of binary positions where the binary expansion of $k$ has digit 1. For instance, $p(5=4+1)=\{0,2\}$ and $p(42=32+8+2)=\{1,3,5\}$.
\end{definition}

A standard well known fact is:
\begin{lemma}\label{wellknwn}
A binomial coefficient $B_{a,b}$ is even if and only if $p(a)\cap p(b)\neq\varnothing$.
\end{lemma}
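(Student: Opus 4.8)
The plan is to deduce Lemma \ref{wellknwn} from a well-known mod-$2$ description of binomial coefficients. Recall $B_{a,b}=\binom{a+b}{a}$ is the coefficient of $x^a$ in $(1+x)^{a+b}$, and that over $\mathbb{F}_2$ the identity $(1+x)^2=1+x^2$ iterates to $(1+x)^{2^i}\equiv 1+x^{2^i}\pmod 2$ for every $i\ge 0$. Multiplying over the binary digits of $k$, I would first record that $(1+x)^k\equiv\prod_{i\in p(k)}(1+x^{2^i})\pmod 2$ for every $k\ge 0$ (the empty product covering $k=0$).

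Applying this with $k=a+b$ and expanding the product, the coefficient of $x^a$ in $(1+x)^{a+b}$ equals, modulo $2$, the number of subsets $S\subseteq p(a+b)$ with $\sum_{i\in S}2^i=a$. By uniqueness of binary expansions there is at most one such $S$, namely $S=p(a)$, and it occurs only when $p(a)\subseteq p(a+b)$. Hence $B_{a,b}$ is odd if and only if $p(a)\subseteq p(a+b)$. It therefore remains to show that $p(a)\subseteq p(a+b)$ is equivalent to $p(a)\cap p(b)=\varnothing$. For the easy direction: if $p(a)\cap p(b)=\varnothing$, then adding $a$ and $b$ bitwise no position receives two $1$'s, so no carries occur and $p(a+b)=p(a)\cup p(b)\supseteq p(a)$. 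For the converse, suppose $j:=\min\!\left(p(a)\cap p(b)\right)$ exists; then at every position $i<j$ at most one of $a,b$ has a $1$, so a short induction on the carry sequence (the carry into position $0$ is $0$, and if the carry into $i$ is $0$ and $a_ib_i=0$ then the carry into $i+1$ is $0$) shows the carry into position $j$ is $0$. Thus the $j$-th binary digit of $a+b$ is $a_j\oplus b_j\oplus 0=0$, so $j\in p(a)\setminus p(a+b)$ and $p(a)\not\subseteq p(a+b)$.

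Putting the pieces together gives exactly the claim: $B_{a,b}$ is even $\iff p(a)\not\subseteq p(a+b)\iff p(a)\cap p(b)\neq\varnothing$. The only point requiring any care is the carry-propagation bookkeeping in the last step; this can be avoided altogether by instead invoking Kummer's theorem (the $2$-adic valuation of $\binom{a+b}{a}$ equals the number of carries when adding $a$ and $b$ in base $2$), which reduces the statement to the same elementary observation that a carry arises precisely when some position has $a_i=b_i=1$, i.e. when $p(a)\cap p(b)\neq\varnothing$. Either route is short, so the main ``obstacle'' here is purely expository — deciding how much of this classical material to include versus cite.
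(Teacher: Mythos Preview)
Your argument is correct. The paper itself gives no proof of this lemma, simply introducing it as ``a standard well known fact''; your write-up supplies exactly the classical justification (essentially Lucas/Kummer) that the authors chose to omit, so there is nothing to compare beyond noting that you have filled in what the paper left implicit.
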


The following result is the only place where the special assumptions in Theorem \ref{calculation1} (i.e., $s=2\sigma\leq n=2m=2^{r+1}-2$ with $r\geq1$) are needed. 
\begin{proposition}\label{coeficientesbinomiales}
Any coefficient
\begin{equation}\label{productodebinomiales}
B_{m,p_1}B_{q_1,p_2}\cdots B_{q_{s-3},p_{s-2}}B_{q_{s-2},m}
\end{equation}
in (\ref{laobstrucciondesarrollada}) with $p_1+q_1=m$ (i.e., with $\delta_1=0$) is even.
\end{proposition}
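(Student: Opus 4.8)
The goal is to prove that the product of binomial coefficients \eqref{productodebinomiales} is even whenever $\delta_1=0$, i.e. whenever $p_1+q_1=m$. The plan is to combine Lemma \ref{wellknwn} (Kummer/Lucas: $B_{a,b}$ is even iff $p(a)\cap p(b)\ne\varnothing$, equivalently iff there is a carry when adding $a$ and $b$ in base 2) with the very special arithmetic of $m$. Here $n=2^{r+1}-2$ so $m=n/2=2^r-1$, whose binary expansion is the string of $r$ consecutive $1$'s, $p(m)=\{0,1,\dots,r-1\}$. The point of the hypothesis $s=2\sigma\le n$ is that all the index sums are bounded by $m$ (since $p_i+q_i\le m$ always, and in the $\delta_1=0$ case $p_1+q_1=m$ exactly), so every relevant nonnegative integer appearing as an argument has binary support inside $\{0,\dots,r-1\}$ or is $m$ itself.

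First I would reduce to a single binomial factor. It suffices to exhibit one factor $B_{a,b}$ in the product \eqref{productodebinomiales} with $p(a)\cap p(b)\ne\varnothing$. I would argue by contradiction: suppose every factor is odd, so by Lemma \ref{wellknwn} the arguments of each factor have disjoint binary supports. Start at the left with $B_{m,p_1}$: since $p(m)=\{0,\dots,r-1\}$ and $0\le p_1\le m$, oddness of $B_{m,p_1}$ forces $p(p_1)\cap p(m)=\varnothing$, and since $p_1\le m<2^r$ this means $p(p_1)=\varnothing$, i.e. $p_1=0$. But then $q_1=m-p_1=m$, so $p(q_1)=p(m)$ is as large as possible. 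Now look at the next factor $B_{q_1,p_2}=B_{m,p_2}$: again oddness forces $p_2=0$, hence $q_2=m-p_2=m$ (using $p_2+q_2=m-\delta_2\le m$, and in fact we need to track $\delta_2$), and the pattern propagates: each $p_i$ is forced to be $0$ and each $q_i$ is forced to equal $m-\delta_i$. Carrying this induction all the way to the right, the last factor becomes $B_{q_{s-2},m}=B_{m-\delta_{s-2},m}$, and I would check that this is even: if $\delta_{s-2}=0$ it is $B_{m,m}=\binom{2m}{m}=\binom{2^{r+1}-2}{2^r-1}$, which is even by Kummer since adding $m+m=2m$ in base 2 produces carries (equivalently $p(m)\cap p(m)=p(m)\ne\varnothing$); if $\delta_{s-2}=1$ then $q_{s-2}=m-1$, and one checks $p(m-1)\cap p(m)\ne\varnothing$ as well (for $r\ge 2$, $m=2^r-1$ and $m-1=2^r-2$ share the bits $\{1,\dots,r-1\}$, which is nonempty since $r\ge 2$; the case $r=1$ needs a separate quick look but is excluded or trivial since then $\sigma\le m=1$). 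Either way the rightmost factor is even, contradicting the assumption that all factors are odd.

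The bookkeeping that needs care is the role of the $\delta_i$'s: recall $p_i+q_i=m-\delta_i$ with $\delta_i\in\{0,1\}$, and the constraint that no two consecutive $\delta_j,\delta_{j+1}$ are simultaneously $1$. In the induction above, once $p_i=0$ is forced we get $q_i=m-\delta_i$; then the factor $B_{q_i,p_{i+1}}=B_{m-\delta_i,p_{i+1}}$ must be odd, and I need to conclude $p_{i+1}=0$ from this. If $\delta_i=0$ this is the same argument as before. If $\delta_i=1$, then $p(m-1)=\{1,\dots,r-1\}$, so oddness forces $p(p_{i+1})\subseteq\{0\}$, i.e. $p_{i+1}\in\{0,1\}$; but if $p_{i+1}=1$ then (since $\delta_i=1$ forces $\delta_{i+1}=0$ by the no-two-consecutive rule, so $p_{i+1}+q_{i+1}=m$) we would have $q_{i+1}=m-1$, and I would continue the propagation with this slightly shifted value — the support $p(q_{i+1})=p(m-1)\ni 1$ still overlaps $p(p_{i+2})$ unless $p_{i+2}=0$, etc. I expect the cleanest formulation is an invariant statement: \emph{assuming all factors odd, each $q_i$ has binary support containing $\{1,\dots,r-1\}$} (which holds for $q_i\in\{m,m-1\}$), and this forces the next $p$ to be supported in $\{0\}$, which keeps the invariant. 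Running this to the end then collides with the last factor $B_{q_{s-2},m}$ being even because $p(q_{s-2})\cap p(m)\supseteq\{1,\dots,r-1\}\ne\varnothing$.

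\textbf{Main obstacle.} The genuine content is Lemma \ref{wellknwn} combined with the observation $m=2^r-1$, which is elementary; the only thing requiring attention is the propagation argument through the chain of factors while correctly accounting for the $\delta_i$'s and the ``no two consecutive $1$'s'' constraint — that is, making sure the inductive invariant is stated so that it is preserved regardless of the $\delta$-pattern, and handling the small values $r=1$ (where $\sigma\le 1$ so there is nothing to prove) and $r=2$ (where $\{1,\dots,r-1\}=\{1\}$ is still nonempty) separately. I expect no real difficulty beyond this careful case analysis; the hypothesis $s\le n$ is used precisely to guarantee all arguments stay below $2^r$ (apart from the two fixed end arguments equal to $m$), so that ``disjoint support from $p(m)$'' really does force the argument to be $0$.
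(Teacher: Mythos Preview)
Your overall strategy---assume all factors odd, propagate constraints from the leftmost factor $B_{m,p_1}$ to the right, and reach a contradiction at the final factor $B_{q_{s-2},m}$---is exactly the approach the paper takes. The gap is in your proposed invariant. You claim that, under the oddness assumption, each $q_i$ has binary support containing $\{1,\dots,r-1\}$, i.e.\ $q_i\in\{m-1,m\}$. This fails as soon as two of the $\delta$'s (even non-consecutive ones) equal $1$. Concretely, take $r=3$ (so $m=7$) and $\delta_1=0,\ \delta_2=1,\ \delta_3=0,\ \delta_4=1$. Then $p_1=0$, $q_1=7$; $p_2=0$, $q_2=6$; oddness of $B_{6,p_3}$ allows $p_3=1$, giving $q_3=6$; oddness of $B_{6,p_4}$ allows $p_4=1$, giving $q_4=m-1-p_4=5$. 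But $p(5)=\{0,2\}$ does not contain $\{1,2\}$, so your invariant breaks, and from this point your support argument no longer forces the next $p$ to be small.

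The paper replaces your binary-support invariant by a looser inequality invariant: if $k_j$ denotes the number of indices $i\le j$ with $\delta_i=1$, then (assuming all factors odd) $p_j\le k_{j-1}$ and $q_j\ge m-k_j$. The key lemma, immediate from Lemma~\ref{wellknwn}, is that $B_{m-j,i}$ is even whenever $0\le j<i\le m$ (since oddness would force $(m-j)+i\le m$). This is exactly what makes the inequality propagate: from $q_{j-1}\ge m-k_{j-1}$ and $B_{q_{j-1},p_j}$ odd one gets $p_j\le k_{j-1}$, hence $q_j=m-\delta_j-p_j\ge m-k_j$. At the end, $q_{s-2}\ge m-k$ where $k$ is the total number of $\delta_i=1$; the no-two-consecutive rule together with $\delta_1=0$ gives $k\le\sigma-1$, and \emph{this} is where $s\le n$ enters: $\sigma\le m$ forces $q_{s-2}\ge m-(\sigma-1)\ge 1$, so $B_{q_{s-2},m}$ is even. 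Your statement that ``$s\le n$ is used precisely to guarantee all arguments stay below $2^r$'' is not quite right---the arguments are below $2^r$ regardless; the hypothesis is needed to bound the accumulated drift $k$.
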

\begin{proof}
We use without further notice the fact coming from Lemma \ref{wellknwn} that any binomial coefficient $B_{m-j,i}$ is even whenever $0\leq j<i\leq m$. Recall $m=2^r-1$ and
\begin{equation}\label{cotaensigma}
s=2\sigma\leq 2^{r+1}-2
\end{equation}
with $r\geq1$. Assume for a contradiction that some coefficient (\ref{productodebinomiales}) is odd (i.e., that all of its binomial-coefficient factors are odd) and has $\delta_1=0$. Recall the forced conditions
\begin{enumerate}
\item \label{i1} $p_i+q_i=m-\delta_i$ with $p_i,q_i\geq0$ and $\delta_i\in\{0,1\}$;
\item \label{i3} $1\leq i<s-2$ and $\delta_i=1$ implies $\delta_{i+1}=0$,
\end{enumerate}
for $1\leq i\leq s-2$. Let $2\leq i_1<i_2<\cdots<i_k\leq s-2$ be all the indices $j$ (if any) with $\delta_j=1$. Note that
\begin{equation}\label{cotadek}
0\leq k\leq \sigma-1,
\end{equation}
in view of (\ref{i3}).

The coefficient $B_{m,p_1}$ is odd by hypothesis, so $p_1=0$ and $q_1=m$ ---the latter equality holds in view of (\ref{i1}) since $\delta_1=0$. Actually, the same argument can be used iteratively for $1\leq j<i_1$ (so $\delta_j=0$) with the binomial coefficients $B_{q_{j-1},p_j}$ (e.g. $q_0:=m$) to show that
$$
\mbox{$p_j=0$ \, and \, $q_j=m$.}
$$
Next, since $m=q_{i_1-1}$, $B_{q_{i_1-1},p_{i_1}}$ is odd and $\delta_{i_1}=1$, we get
$$
\mbox{$p_{i_1}=0$ \, and \, $q_{i_1}=m-1$,}
$$
and now the process repeats with a slight adjustment. For starters, $q_{i_1}=m-1$, $B_{q_{i_1},p_{i_1+1}}$ is odd and $\delta_{i_1+1}=0$ is forced by (\ref{i3}), so that $p_{i_1+1}\leq1$ and $q_{i_1+1}\geq m-1$. We then iterate the latter argument: For $i_1< j<i_2$, the assumption $\delta_j=0$ and the fact that $B_{q_{j-1},p_j}$ is odd with $q_{j-1}\geq m-1$ yield
$$
\mbox{$p_j\leq 1$ \, and \, $q_j\geq m-1$.}
$$
Of course, the last two inequalities now hold for all $1\leq j<i_2$. The next round of iterations start with the fact that $B_{q_{i_2-1},p_{i_2}}$ is odd with $q_{i_2-1}\geq m-1$ and $\delta_{i_2}=1$, to get
$$
\mbox{$p_{i_2}\leq 1$ \, and \, $q_{i_2}\geq m-2$,}
$$
and the process has a corresponding new obvious adjustment to yield
$$
\mbox{$p_j\leq 2$ \, and \, $q_j\geq m-2$,}
$$
 for $1\leq j<i_3$, whereas
$$
\mbox{$p_{i_3}\leq 2$ \, and \, $q_{i_3}\geq m-3$.}
$$
Just before the last adjustment we get
$$
\mbox{$p_j\leq k-1$ \, and \, $q_j\geq m-k+1$,}
$$
for $1\leq j<i_k$, whereas
$$
\mbox{$p_{i_k}\leq k-1$ \, and \, $q_{i_k}\geq m-k$.}
$$
However, after this point the conditions $p_j\leq k$ and $q_j\geq m-k$ are kept for all $j\leq s-2$. In particular, $q_{s-2}\geq m-k=2^r-1-k\geq1$, in view of (\ref{cotaensigma}) and (\ref{cotadek}). But then the final factor $B_{q_{s-2},m}$ of (\ref{productodebinomiales}) is even, a contradiction.
\end{proof}

\begin{proof}[Proof of Proposition \ref{calculofinal}]
The right hand-side of (\ref{ladiferencial}) yields the defining relations in $\mathcal{H}_\bullet$. Namely, for each tuple $(u_1,v_1,\ldots,u_{\sigma-1},v_{\sigma-1},u_\sigma)$ of non-negative integers there is a defining relation
\begin{equation}\label{comorelacion}
0=U_1+V_1+U_2+V_2+\cdots+U_{\sigma-1}+V_{\sigma-1}+U_\sigma
\end{equation}
where
\begin{align*}
U_i:=&(-1)^{p_i}\,2\,\text{odd}(u_i)\,[u_1,v_1,\ldots,u_{i-1},v_{i-1},u_i-1,v_i,u_{i+1},v_{i+1},\ldots,u_{\sigma-1},v_{\sigma-1},u_\sigma],\\
V_i:=&(-1)^{q_i}\,2\,\text{even}(v_i)\,[u_1,v_1,\ldots,u_{i-1},v_{i-1},u_i,v_i-1,u_{i+1},v_{i+1},\ldots,u_{\sigma-1},v_{\sigma-1},u_\sigma],\\ 
p_i:=&1+\sum_{1\leq j<i}(u_j+v_j) \text{ \ \ and \ \ }
q_i:=\sum_{1\leq j<i}(u_j+v_j)+u_i.
\end{align*}

The tuple $(u_1,v_1,\ldots,u_{\sigma-1},v_{\sigma-1},u_\sigma)$ and the basis element $[u_1,v_1,\ldots,u_{\sigma-1},v_{\sigma-1},u_\sigma]$ of $\mathcal{D}_\bullet$ are said to be \emph{even} (respectively, \emph{odd}) when $u_1$ is even (respectively, odd). In the odd case, (\ref{comorelacion}) gives a way to write the double of the class in $\mathcal{H}_\bullet$ represented by an even basis element as a linear combination of the doubles of classes represented by odd basis elements. On the other hand, in the even case $U_1=0$ and the right hand-side of (\ref{comorelacion}) is a linear combination of doubles of classes represented by even basis elements. A straightforward computation\footnote{The calculation is formally identical to the standard verification that $\partial^2=0$ for the boundary morphism $\partial$ in the singular complex of a given space. Details are left as an exercise for the reader.} shows that the latter linear combination vanishes directly in $\mathcal{D}_\bullet$ when (the double of) each even summand is replaced by the corresponding linear combination of odd basis elements. Thus, relations (\ref{comorelacion}) coming from even tuples are irrelevant. Since each even basis element appears in a single relation (\ref{comorelacion}) coming from an odd tuple, we see that the subgroup of $\mathcal{H}_\bullet$ spanned by the classes represented by odd basis elements is torsion free. The proof is complete in view of Proposition \ref{coeficientesbinomiales}.
\end{proof}

\section*{Acknowledgments} 
The research of L. Vandembroucq was partially financed by Portuguese
Funds through FCT (Fundaç\~ao para a Ci\^encia e a Tecnologia) within the Project UID/00013:
Centro de Matemática da Universidade do Minho (CMAT/UM).
Sam Hughes was supported by a Humboldt Research Fellowship at Universit\"at Bonn.

\end{document}